\DeclareTextSymbol{\cyrsftsn}{OT2}{126}
\DeclareTextSymbol{\textnumero}{OT2}{125}
\theoremstyle{definition}
\newtheorem{theorem}{Theorem}[section]
\newtheorem{lemma}{Lemma}[section]
\newtheorem{definition}{Definition}[section]
\newtheorem{remark}{Remark}[section]
\begin{document}
\title{{\LARGE\bf{ Fractional Wishart Processes and $\varepsilon$-Fractional Wishart Processes with Applications}\thanks{This work was supported by  the National Natural Science Foundation of China (11471230, 11671282).}}}
\author{Jia Yue and Nan-jing Huang \footnote{Corresponding author.  E-mail addresses: nanjinghuang@hotmail.com, njhuang@scu.edu.cn} \\
{\small\it Department of Mathematics, Sichuan University, Chengdu,
Sichuan 610064, P.R. China}}
\date{ }
\maketitle
\begin{flushleft}
\hrulefill\\
\end{flushleft}
 {\bf Abstract}.
In this paper, we introduce two new matrix stochastic processes: fractional Wishart processes and  $\varepsilon$-fractional Wishart processes with integer indices which are based on the fractional Brownian motions and then extend $\varepsilon$-fractional Wishart processes to the case with non-integer indices. Both processes include classic Wishart processes (if the Hurst index $H$ equals $\frac{1}{2}$) and present serial correlation of stochastic processes. Applying $\varepsilon$-fractional Wishart processes to  financial volatility theory, the financial models account for the stochastic volatilities of the assets and for the stochastic correlations not only between the underlying assets' returns but also between their volatilities and for stochastic serial correlation of the relevant assets.
 \\ \ \\
{\bf Keywords:} Fractional Wishart process; $\varepsilon$-Fractional Wishart processes; Stochastic partial differential equation; Financial volatility theory.
\\ \ \\
{\bf 2010 AMS Subject Classifications:}  60G22; 60H15; 91G80.
\begin{flushleft} \hrulefill \end{flushleft}

\section{Introduction}\label{section1} \noindent
\setcounter{equation}{0}
Since Black and Scholes' significant work (\cite{Black}), more and more stochastic processes are widely used to capture diverse phenomena in financial markets, such as Brownian motions, fractional Brownian motions, L\'{e}vy processes, Wishart processes and so on.

Heston's model (\cite{Heston}) adopts Brownian motions to describe the stochastic volatility of the stock, as empirical evidences (\cite{Anderson, Dupire}) have shown that the classic Black-Scholes assumption of lognormal stock diffusion with constant volatility is not consistent with the market price (such phenomenon is often referred to as the volatility skew or smile).

As the backbone of multivariate statistical analysis, random matrices have found their applications in many fields, such as physics, economics, psychology and so on. Bru (\cite{Bru}) develops Wishart processes in mathematic which are dynamic random matrices and turn out to be a better way to capture stochastic volatility and correlation structure of the relevant random vectors. In recent years, there has been tremendous growth of multi-asset financial contracts (outperformance options, for example), which exhibit sensitivity to both the volatilities and the correlations of the underlying assets. The authors of \cite{Avellaneda} and \cite{Cont} show that the correlations between financial assets evolve stochastically and are far from remaining static through time. Furthermore, in \cite{Loretan} and \cite{Solnik}, there are evidences which present that the higher the market volatility is, the higher the correlations between financial assets tend to be. In order to include those phenomena in financial markets,  Wishart Affine Stochastic Correlation models (\cite{Fonseca,Romo15,Nicole}) introduce Wishart processes to account for the stochastic volatilities of the assets and for the stochastic correlations not only between the underlying assets' returns but also between their volatilities.

There also exists early evidence (\cite{Lo}) which shows the processes of observable market values seem to exhibit serial correlation (this means the increments of the process depend on the information of the past). So fractional Brownian motions are proposed for mapping this kind of behavior. Fractional Brownian motions not only capture serial correlation of stochastic processes, but also keep a good analytical treatability for still being Gaussian, which leads that they become an interesting and important candidate for financial models. Furthermore, generalized results of fractional Brownian motions often include the corresponding well-known results of classic Brownian motions, as fractional Brownian motions are just classic Brownian motions when the Hurst index equals $\frac{1}{2}$.
Mandelbrot and van Ness (\cite{Mandelbrot}) suggest fractional Brownian motions as an alternative model for assets' dynamics, which allow for dependence between returns over time. Since then, there is an ongoing dispute on the usage of fractional Brownian motions in financial theories (\cite{Willinger, Sottinen, Rogers}).

In our paper, we shall introduce two new matrix stochastic processes: the fractional Wishart process and  $\varepsilon$-fractional Wishart process. First of all, the fractional Wishart process (which is based on the fractional Brownian motion) is the generalization of the Wishart process (which is based on the classic Brownian motion) such that the former degenerates to the latter when the Hurst index equals $\frac{1}{2}$. The fractional Wishart process can present serial correlation of stochastic processes while the Wishart process is a Markov process (see Definition 2 in \cite{Gourieroux06}) whose increments are independent of the past such that the Wishart process can be thought of as `memoryless'. The $\varepsilon$-fractional Wishart process is the approximation of the fractional Wishart process as the fractional Wishart process does not keep a good analytical treatability. The difference between the Wishart process and $\varepsilon$-fractional Wishart process is that the former process is governed by a related stochastic differential equation (SDE) while the latter process is governed by a related stochastic partial differential equation (SPDE). Of course, the $\varepsilon$-fractional Wishart process becomes the Wishart process when the Hurst index equals $\frac{1}{2}$. In financial theory, if we apply the fractional Wishart process or  $\varepsilon$-fractional Wishart process  to the volatility of assets, then the model shall account for the stochastic volatilities of the assets and for the stochastic correlations not only between the underlying assets' returns but also between their volatilities and for stochastic serial correlation of the relevant assets.

The rest of the paper is organized as follows. Section \ref{section2} sketches the main assumptions and results needed in this paper. Then we define the fractional Wishart process with an integer index in Section \ref{section3}. In Section \ref{section4}, we define $\varepsilon$-fractional Wishart process with an integer index and then extend $\varepsilon$-fractional Wishart process to the case with a non-integer index by its related SPDE. In Section \ref{section5}, a generalization of the $\varepsilon$-fractional Wishart process which includes two more parameters is discussed.  In Section \ref{section6}, we apply the $\varepsilon$-fractional Wishart process to  financial volatility model.  Finally, some conclusions and future work are included in Section \ref{section7}.

\section{Preliminaries}\label{section2}\noindent
\setcounter{equation}{0}
In this section, we shall sketch some basic concepts related to matrix variate distributions and fractional Brownian motions and so on (one can refer to \cite{Gupta, Kunita, Biagini} for details).

Let $(\mathbf{\Omega},\mathcal{G},(\mathcal{G}_t)_{t\geq 0}, \mathbb{P}_0)$ be a filtered probability space satisfying the usual conditions. The stochastic processes are considered in such probability space if we do not give the probability space.

For any positive integers $n$ and $p$, let $\mathcal{M}_{n,p}(\mathbb{R})$ (resp. $\mathcal{S}_{p}(\mathbb{R})$, $\mathcal{S}_{p}^+(\mathbb{R})$ and $\overline{\mathcal{S}_p^+}(\mathbb{R})$ ) denote the sets of all real-valued $n\times p$ matrices (resp. $p\times p$ symmetric matrices,  $p\times p$ symmetric positive definite matrices and $p\times p$ symmetric positive semidefinite matrices). For any real Banach spaces $\mathbb{X}$ and $\mathbb{Y}$, let $\mathcal{L}(\mathbb{X},\mathbb{Y})$ denote the space of bounded linear operators from $\mathbb{X}$ to $\mathbb{Y}$. In $\mathcal{M}_{n,p}(\mathbb{R})$£¬ the norm  of any matrix $A=(A_{ij})_{1\leq i\leq n,1\leq j\leq p}$, denoted by $|A|_{n\times p}$ or just $|A|$ is defined as the Frobenius norm (see, for example, \cite{Dennis}):
\[|A| = \sqrt{\sum\limits_{i = 1}^n {\sum\limits_{j = 1}^p {|{A_{ij}}{|^2}} } }.\]
 For normed spaces $\mathbb{X}_1$ and $\mathbb{X}_2$, we will define the norm on $\mathbb{X}_1\times \mathbb{X}_2$ as $|\cdot|_{\mathbb{X}_1}+|\cdot|_{\mathbb{X}_2}$ (or $|\cdot|+|\cdot|$). For any map $f(x_1,\cdots,x_n)$, let $\partial_{x_i}^kf$ denote the $k$-th partial derivative of $f$ and $\partial_{x_1}^kf$ be denoted by $\partial^kf$ when $n=1$. Moreover, we always use $X'$ to denote the transpose of a matrix $X$.

For a definition of a random matrix, as well as its probability density function (p.d.f.), the moment generating function (m.g.f) and so on, one can refer to \cite{Gupta}. For any random matrix $X\in \mathcal{M}_{n,p}( \mathbb{R})$, we mean that the random matrix $X$ takes its values in the set $\mathcal{M}_{n,p}( \mathbb{R})$.

Let $A\in \mathcal{M}_{m,n}(\mathbb{R})$ and $B\in \mathcal{M}_{p,q}(\mathbb{R})$. Then the Kronecker product (also called the direct product, see, for example, \cite{Gupta}) of $A$ and $B$, denoted by $A\otimes B$, is defined by
\[A\otimes B = ({A_{ij}}B) = \left( {\begin{array}{*{20}{c}}
{{A_{11}}B}&{{A_{12}}B}& \cdots &{{A_{1n}}B}\\
{{A_{21}}B}&{{A_{21}}B}& \cdots &{{A_{11}}B}\\
 \vdots & \vdots & \ddots & \vdots \\
{{A_{m1}}B}&{{A_{m1}}B}& \cdots &{{A_{mn}}B}
\end{array}} \right).\]
For a matrix $X\in \mathcal{M}_{n,p}( \mathbb{R})$, let ${\mathop{\rm vec}}(X)$ be the following $np\times 1$ vector,
\[{\mathop{\rm vec}}(X) = \left( {\begin{array}{*{20}{c}}
{{x_1}}\\
 \vdots \\
{{x_p}}
\end{array}} \right) = \left( {\begin{array}{*{20}{c}}
{X{e_1}}\\
 \vdots \\
{X{e_p}}
\end{array}} \right),\]
where $x_i$ ( $i=1,\cdots,p$ ) is the $i^{th}$ column of $X$ and  $e_1=(1,0,\cdots,0)',\cdots,e_p=(0,0,\cdots,1)'$.
If $X\in \mathcal{M}_{n,p}( \mathbb{R})$ and $Y\in \mathcal{M}_{r,s}( \mathbb{R})$ are two random matrices, then the $np\times rs$ covariance matrix is defined by $ {\mathop{\rm cov}} (X,Y) = {\mathop{\rm cov}} (\mathrm{vec}(X'),\mathrm{vec}(Y'))$ and specially, ${\mathop{\rm cov}} (X) =  {\mathop{\rm cov}} (\mathrm{vec}(X'))$.

\begin{definition}\label{wd}
A random matrix $S\in \mathcal{S}_{p}^+( \mathbb{R})$ is said to have a noncentral Wishart distribution with parameters $p,n(n\geq p)$, $\Sigma\in \mathcal{S}_{p}^+( \mathbb{R})$ and $\Theta \in \mathcal{M}_{p,p}(\mathbb{R})$, written as $S \sim {W_p}(n,\Sigma,\Theta )$, if its p.d.f is given by
\[{\left\{ {2^{\frac{1}{2}np}}{\Gamma _p}\left(\frac{1}{2}n\right)\det {(\Sigma )^{\frac{1}{2}n}}\right\} ^{ - 1}}\det {(S)^{\frac{1}{2}(n - p - 1)}}{\mathop{\rm etr}}\left( - \frac{1}{2}\Theta  - \frac{1}{2}{\Sigma ^{ - 1}}S\right)\ _0{F_1}\left(\frac{1}{2}n;\frac{1}{4}\Theta {\Sigma ^{ - 1}}S\right),\]
where $\det(\cdot)$ denotes the determinant of a square matrix, ${\mathop{\rm Tr}}(\cdot)$ denotes the trace of a square matrix, ${\mathop{\rm etr}}(\cdot)$ is short for $\exp\{{\mathop{\rm Tr}}(\cdot)\}$, $\Gamma _p(\cdot)$ is the multivariate gamma function and $_0F_1$ is the hypergeometric function (\cite{Gupta}).
\end{definition}

\begin{definition}\label{d2.3}
Let $H\in (0,1)$ be a constant. A fractional Brownian motion $(B_t^H)_{t\geq 0}$ of Hurst index $H$ is a continuous Gaussian process with covariance function
\[\mathrm{cov}[B_t^H,B_s^H] = \frac{1}{2}({t^{2H}} + {s^{2H}} - |t - s{|^{2H}})£¬\]
with initial state $B_0^H=C$ almost surely (a.s.).
\end{definition}

Note that a fractional Brownian motion is just a Brownian motion if $H=\frac {1}{2}$.

\begin{remark}
According to \cite{Rostek}, in the case $H<\frac{1}{2}$, the fractional Brownian motion is also called the antipersistent fractional Brownian motion which has intermediate memory, whereas in the case $H>\frac{1}{2}$, the fractional Brownian motion is also called the persistent fractional Brownian motion which has long memory. Hence, as well-known to us, in the case $H\neq\frac{1}{2}$, the fractional Brownian motion increment depends on its historical increments, while the classic Brownian motion has independent increments which are independent of the past.
\end{remark}

For other concepts, one can refer to \cite{Barndorff-Nielsen} for matrix stochastic process and to \cite{Kunita} for the random field. Here we shall give some important results in the form of matrix.

\begin{definition}
For two semimartingales $A\in \mathcal{M}_{d,m}( \mathbb{R})$ and $B\in \mathcal{M}_{m,n}( \mathbb{R})$, the matrix quadratic covariation process is defined by
\[{\left\langle {A,B} \right\rangle _t} = {\left(\sum\limits_{k = 1}^m {{{\left\langle {{A_{ik}},{B_{kj}}} \right\rangle }_t}} \right)_{ij}} \in \mathcal{M}_{d,n}( \mathbb{R}).\]
\end{definition}

\begin{remark}
It is known that $\left\langle {\cdot,\cdot} \right\rangle$ plays the same role for matrix multiplication of matrix-valued semimartingales, as the quadratic covariation process does for multiplication of one-dimensional semimartingales (\cite{Barndorff-Nielsen}). We will use the following symbol
\[{\left\langle A \right\rangle _t} = {\left\langle {A,A'} \right\rangle _t}\in \mathcal{M}_{d,d}( \mathbb{R}).\]
For example, let $B_t=(B_1(t),\cdots,B_n(t))'$ be an $n$-dimensional standard Brownian motion in $\mathcal{M}_{n,1}( \mathbb{R})$.  Then $\left\langle B \right\rangle _t=tI_n$.
\end{remark}

Throughout, we shall use $C(q)$  to denote a positive constant depending only on $q$ but its value may change from line to line. Moreover, all integrals with respect to Brownian motions or Brownian matrices are in the sense of It\^{o} satisfying the conditions such that It\^{o} integrals exist (\cite{Oksendal}).

\begin{lemma}\label{MMI}
(Martingale Moment Inequality) Let $M$ be a continuous local martingale in $\mathcal{M}_{p,p}( \mathbb{R})$, along with its quadratic variation process $\langle M\rangle$. For every $q>0$, there exists a positive constant $C(q)$ such that, for any stopping time $T$,
\begin{equation}\label{eqMMI_1}
E|{M_T}|^q \le C(q)E| \langle M{ \rangle _T}|^{\frac{q}{2}}.
\end{equation}
\end{lemma}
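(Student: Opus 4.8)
The plan is to reduce the matrix-valued statement to the classical scalar Burkholder--Davis--Gundy (BDG) inequality, applied entrywise, and then reassemble the estimates with elementary norm comparisons on the finite-dimensional space $\mathcal{M}_{p,p}(\mathbb{R})$. Recall first that for a scalar continuous local martingale $N$ and any stopping time $T$ one has $E|N_T|^q\le c(q)\,E\langle N\rangle_T^{q/2}$ for every $q>0$: this follows from the usual BDG bound on the running maximum applied to the stopped martingale $N^T$, since by path continuity $|N_T|\le\sup_{s\ge0}|N_s^T|$ and $\langle N^T\rangle_\infty=\langle N\rangle_T$ (on the event $\{T=+\infty\}$ both sides are read as possibly infinite and the inequality is trivial there). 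Each entry $M_{ij}$ of $M$ is a scalar continuous local martingale, so this gives $E|M_{ij}(T)|^q\le c(q)\,E\langle M_{ij}\rangle_T^{q/2}$ for all $i,j$.

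Next I would pass from the Frobenius norm of $M_T$ to its entries. Writing $|M_T|^q=\bigl(\sum_{i,j=1}^p|M_{ij}(T)|^2\bigr)^{q/2}$ and using the elementary bound $\bigl(\sum_{k=1}^m a_k\bigr)^r\le\max(1,m^{r-1})\sum_{k=1}^m a_k^r$ for nonnegative $a_k$ (power-mean convexity when $r=q/2\ge1$, subadditivity of $x\mapsto x^r$ when $r=q/2\le1$), with $m=p^2$, one gets $|M_T|^q\le\max(1,p^{q-2})\sum_{i,j}|M_{ij}(T)|^q$. Taking expectations and inserting the entrywise BDG bound yields
\[
E|M_T|^q\;\le\;\max(1,p^{q-2})\,c(q)\sum_{i,j=1}^p E\langle M_{ij}\rangle_T^{q/2}.
\]
Then I would control each scalar bracket $\langle M_{ij}\rangle_T$ by the matrix bracket $\langle M\rangle_T=\langle M,M'\rangle_T$. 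The key observation is that the diagonal of $\langle M\rangle_T$ is $(\langle M\rangle_T)_{ii}=\sum_{k=1}^p\langle M_{ik}\rangle_T$, so $\mathrm{Tr}(\langle M\rangle_T)=\sum_{i,k}\langle M_{ik}\rangle_T$; since every scalar quadratic variation is nondecreasing from $0$, each term is nonnegative, hence $\langle M_{ij}\rangle_T\le\mathrm{Tr}(\langle M\rangle_T)$ for every $(i,j)$, and $\mathrm{Tr}(\langle M\rangle_T)\le\sqrt{p}\,|\langle M\rangle_T|$ by Cauchy--Schwarz applied to the diagonal. Therefore $\langle M_{ij}\rangle_T^{q/2}\le p^{q/4}|\langle M\rangle_T|^{q/2}$, and substituting into the display (there are $p^2$ summands) gives $E|M_T|^q\le\max(1,p^{q-2})\,c(q)\,p^{2}\,p^{q/4}\,E|\langle M\rangle_T|^{q/2}$, i.e. the claim with $C(q):=\max(1,p^{q-2})\,p^{2+q/4}\,c(q)$ (the dimension $p$ being a fixed parameter of the statement).

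The only genuinely delicate point is the bookkeeping that links the scalar brackets $\langle M_{ij}\rangle$ to the matrix bracket $\langle M\rangle=\langle M,M'\rangle$ — in particular recognising that $\mathrm{Tr}\langle M\rangle_t=\sum_{i,j}\langle M_{ij}\rangle_t$ dominates each entry; everything else is the scalar BDG inequality together with norm equivalences on a finite-dimensional space. A minor technicality is reducing the value $M_T$ at a stopping time to the running-supremum form of BDG, which is dispatched by path continuity and by treating $\{T=+\infty\}$ separately.
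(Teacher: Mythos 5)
Your proposal is correct and follows essentially the same route as the paper: apply the scalar BDG inequality to each entry $M_{ij}$, pass between the Frobenius norm and the entries by elementary power inequalities, and dominate each scalar bracket $\langle M_{ij}\rangle_T$ by the matrix bracket $\langle M\rangle_T$ (the paper does the last step slightly more directly, noting $\langle M_{ij}\rangle_T\le(\langle M\rangle_T)_{ii}\le|\langle M\rangle_T|$, while you go through the trace at the harmless cost of an extra factor $p^{q/4}$). Since only the existence of some constant $C(q)$ is claimed, the two arguments are equivalent.
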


\begin{proof}
For any $i,j=1,2,\cdots,p$, by Burkholder-Davis-Gundy inequalities (see Theorem 3.28 in \cite{Karatzas}), we have
\begin{equation}\label{eqMMI_2}
E|{M_{ij,T}}{|^q} \le {C}(q)E| \langle {M_{ij}}{ \rangle _T}{|^{\frac{q}{2}}} \le {C}(q)E| \langle M{ \rangle _T}{|^{\frac{q}{2}}}.
\end{equation}

For any $a_i\in \mathbb{R}$ and integer $n$, it is easy to check that
\[\sum\limits_{i = 1}^n {|{a_i}{|^q}}  \le n{(\mathop {\max }\nolimits_{1 \le i \le n} |{a_i}|)^q} \le n(\sum\limits_{i = 1}^n {|{a_i}|{)^q}} \]
and
\[\left(\sum\limits_{i = 1}^n |{a_i}|\right)^q  \le {(n\mathop {\max }\nolimits_{1 \le i \le n} |{a_i}|)^q} \le {n^q}\sum\limits_{i = 1}^n {|{a_i}{|^q}}. \]
Thus, one has
\begin{eqnarray}\label{eqMMI_3}
E|{M_T}{|^q}& = &E{\left(\sqrt {\sum\limits_{i,j = 1}^p {|{M_{ij,T}}{|^2}}  } \right)^q} \nonumber\\
&\le& {C}(q)E{\left(\sum\limits_{i,j = 1}^p  {|{M_{ij,T}}|} \right)^q}\nonumber\\
 &\le& {C}(q)\sum\limits_{i,j = 1}^p  {E|{M_{ij,T}}{|^q} }.
\end{eqnarray}
Then it follows from (\ref{eqMMI_2}) and (\ref{eqMMI_3}) that (\ref{eqMMI_1}) holds. \hfill $\blacksquare$

\end{proof}

\begin{remark}\label{remark2.1}
Let $T>0$ be fixed, $B_t$ be a Brownian matrix (\cite{Bru}) in $\mathcal{M}_{p,p}( \mathbb{R})$ and $f_s(x)$ be a  random filed in $\mathcal{M}_{p,p}( \mathbb{R})$ such that
for any $t\in [0,T]$ and $x\in \Lambda \subseteq \mathbb{R}$,  $M_t=\int_0^t {{f_s}(x)d{B_s}}$ is a continuous local martingale, then we have
\begin{eqnarray*}
 \langle M{ \rangle _{ij,t}} &=& \sum\limits_{k = 1}^p  \langle  {M_{ik}},M{'_{kj}}{ \rangle _t} \\
 &=&\sum\limits_{k = 1}^p  \left\langle  \sum\limits_{u = 1}^p {\int_0^ \cdot  {{f_{iu,s}}(x)d{B_{uk,s}}} } ,\sum\limits_{v = 1}^p {\int_0^ \cdot  {{f_{jv,s}}(x)d{B_{vk,s}}} } \right\rangle _t\\
 &=& \sum\limits_{k = 1}^p {\sum\limits_{u = 1}^p {\int_0^t {{f_{iu,s}}(x){f_{ju,s}}(x)ds} } } \\
 &=& p{\left(\int_0^t {{f_s}(x){f_s}(x)} 'ds\right)_{ij}},
\end{eqnarray*}
which implies
\[ \langle M{ \rangle _t} = p\int_0^t {{f_s}(x){f_s}(x)} 'ds.\]
Moreover, for any $A, B\in \mathcal{M}_{p,p}( \mathbb{R})$, it is not hard to check that$|AB|\leq C|A||B|$.
For $q>2$, by above martingale moment inequality and H\"{o}lder's inequality, we have
\begin{eqnarray*}
E\left|\int_0^t {{f_s}(x)d{B_s}} \right|^q &\le& C(q)E\left|\int_0^t {{f_s}(x){f_s}(x)} 'ds \right|^{\frac{q}{2}}\\
 &\le& C(q)E{\left(\int_0^t {|{f_s}(x){f_s}(x)} '|ds\right)^{\frac{q}{2}}}\\
 &\le& C(q)E{\left(\int_0^t {|{f_s}(x)} {|^2}ds\right)^{\frac{q}{2}}} \\
 &\le& C(q)E\int_0^t {|{f_s}(x)} {|^q}ds
\end{eqnarray*}
and
\[E\left|\int_0^t {{f_s}(x)ds} \right|^q \le {C}(q)E\int_0^t {|{f_s}(x)} {|^q}ds,\]
which we shall use in Section \ref{section4}.
\end{remark}

\begin{lemma}\label{lemma2.2}
(Kolmogorov's Theorem) Let $f_s(x)=(f_{ij,s}(x))$ $(i,j=1,2,\cdots ,p; s\in [0,T];x\in \Lambda \subseteq \mathbb{R})$ be a measurable random (matrix) field in $\mathcal{M}_{p,p}( \mathbb{R})$ satisfying that $f_s(x)$ is $m$-times ($m\geq 1$) continuously differentiable in $x$ for all $s$ a.s. and that derivatives $\partial^kf_s(x)=(\partial^kf_{ij,s}(x))_{ij}$, $1\leq k\leq m$ fulfill the following conditions
\begin{eqnarray*}
&&\int_0^T {E[|{\partial^k}{f_s}(x){|^q}]} ds \le {C_1}(q),\\
&&\int_0^T {E[|{\partial^k}{f_s}(x) - {\partial^k}{f_s}(y){|^q}]} ds \le {C_2}(q)|x - y{|^{\gamma q}}
\end{eqnarray*}
for any $x,y\in\Lambda$ and $q>2$, where $0<\gamma\leq 1$ and ${C_1}(q)$, ${C_2}(q)$ are positive constants depending on $q$.
Then there are modifications of the integrals $\int_0^t {{f_s}(x)d{B_s}}$ and $\int_0^t {{f_s}(x)d{s}}$ which are continuous in $(t,x)$ and $m$-times continuously differentiable in $x$ for all $t$ a.s. Furthermore, it holds
\begin{equation*}
{\partial^k}\int_0^t {{f_s}(x)d{B_s}}  = \int_0^t {{\partial^k}{f_s}(x)d{B_s}},\quad {\partial^k}\int_0^t {{f_s}(x)d{s}}  = \int_0^t {{\partial^k}{f_s}(x)d{s}},
\end{equation*}
for $1\leq k\leq m$.
\end{lemma}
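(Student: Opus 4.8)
The plan is to reduce everything to the one-dimensional case and then combine the martingale moment estimates of Remark~\ref{remark2.1} with the classical Kolmogorov continuity criterion. Since the Frobenius norm of a $p\times p$ matrix is comparable, up to constants depending only on $p$, to the maximum of the absolute values of its entries --- this is exactly the elementary inequality used in the proof of Lemma~\ref{MMI} --- it suffices to prove the statement for a scalar random field $g_s(x):=f_{ij,s}(x)$ and the associated scalar It\^o integral, the matrix conclusion following by treating the $p^2$ entries separately. Write $M_t(x)=\int_0^t g_s(x)\,dB_s$ and $N_t(x)=\int_0^t g_s(x)\,ds$.

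\emph{Step 1 (joint continuity in $(t,x)$).} For each fixed $x$ the process $t\mapsto M_t(x)$ has a version continuous in $t$, and likewise for $N_t(x)$. Using the estimates of Remark~\ref{remark2.1} (Doob's inequality / the martingale moment inequality applied to $M_\cdot(x)-M_\cdot(y)$) together with H\"older's inequality, for $q>2$ one obtains
\[E\Big[\sup_{0\le t\le T}\big|M_t(x)-M_t(y)\big|^q\Big]\le C(q)\,E\!\int_0^T\!\big|g_s(x)-g_s(y)\big|^q\,ds\le C(q)C_2(q)\,|x-y|^{\gamma q},\]
and the same bound for $N$. Thus $x\mapsto M_\cdot(x)$ and $x\mapsto N_\cdot(x)$ are processes indexed by $x\in\Lambda\subseteq\mathbb{R}$ with values in the Banach space $C([0,T])$ whose increments have $q$-th moment of order $|x-y|^{\gamma q}$; choosing $q$ large enough that $\gamma q>1$ and invoking the Kolmogorov continuity theorem for Banach-space-valued processes yields modifications that are H\"older continuous in $x$ uniformly in $t$, i.e.\ modifications $\widetilde M_t(x)$, $\widetilde N_t(x)$ jointly continuous in $(t,x)$. (Equivalently one can run the two-parameter Kolmogorov criterion after also estimating $E|M_t(x)-M_{t'}(x)|^q\le C(q)C_1(q)\,|t-t'|^{q/2-1}$ from Remark~\ref{remark2.1}.)

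\emph{Step 2 (differentiability, by induction on the order).} Since each $\partial^k f_s$ with $1\le k\le m$ satisfies the same two hypotheses as $f_s$, Step 1 also provides jointly continuous modifications $M^{(k)}_t(x)=\int_0^t\partial^k g_s(x)\,dB_s$ and $N^{(k)}_t(x)=\int_0^t\partial^k g_s(x)\,ds$. To identify $\partial_x M_t(x)$ with $M^{(1)}_t(x)$, write the fundamental theorem of calculus $g_s(x+h)-g_s(x)=h\int_0^1\partial g_s(x+\theta h)\,d\theta$, substitute into the stochastic integral, and apply the stochastic Fubini theorem to interchange the $d\theta$- and $dB_s$-integrations, getting $h^{-1}\big(M_t(x+h)-M_t(x)\big)=\int_0^1 M^{(1)}_t(x+\theta h)\,d\theta$. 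Joint continuity of $M^{(1)}$ forces the right-hand side to converge to $M^{(1)}_t(x)$ as $h\to0$, uniformly for $(t,x)$ in compact sets, which is precisely the assertion that $t\mapsto M_t(x)$ is continuously differentiable in $x$ with derivative $M^{(1)}_t(x)$; the same computation applies to $N_t(x)$. Iterating $m$ times gives $\partial^k\int_0^t g_s(x)\,dB_s=\int_0^t\partial^k g_s(x)\,dB_s$ and $\partial^k\int_0^t g_s(x)\,ds=\int_0^t\partial^k g_s(x)\,ds$ for $1\le k\le m$, and reassembling the entries yields the matrix identities claimed.

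I expect the delicate points to be: (a) the rigorous justification of the stochastic Fubini interchange --- checking the joint measurability and integrability of $(\theta,s)\mapsto\partial g_s(x+\theta h)$, which follow from the moment bound $C_1(q)$; and (b) the bookkeeping of modifications --- ensuring that a continuous-in-$x$ modification taking values in $C([0,T])$ genuinely yields a single process jointly continuous in $(t,x)$, and that the locally uniform convergence of the difference quotients produces an honest pathwise derivative rather than merely an $L^q$- or in-probability limit.
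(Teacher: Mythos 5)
Your argument is correct, but it follows a genuinely different route from the paper. The paper's proof consists only of the componentwise reduction: it checks that each scalar entry $f_{ij,s}(x)$ inherits the moment and H\"older hypotheses (since $|f_{ij,s}(x)|\le |f_s(x)|$), invokes Theorem 10.6 of \cite{Kunita82} for scalar random fields to obtain the jointly continuous, $m$-times differentiable modifications of $\int_0^t f_{ij,s}(x)\,dB_{uv,s}$ and $\int_0^t f_{ij,s}(x)\,ds$ together with the commutation of $\partial^k$ with the integrals, and then reassembles the matrix integrals as finite sums of these scalar ones. You perform the same reduction but then re-prove the scalar statement from scratch: Burkholder--Davis--Gundy/H\"older estimates as in Lemma \ref{MMI} and Remark \ref{remark2.1}, the Kolmogorov criterion for $C([0,T])$-valued processes to get joint continuity, and the fundamental theorem of calculus plus stochastic Fubini to identify $\partial_x\int_0^t g_s(x)\,dB_s$ with $\int_0^t\partial g_s(x)\,dB_s$, iterated over the order of differentiation. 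What the paper's citation buys is brevity, at the cost of hiding precisely the two points you flag as delicate (justifying the Fubini interchange and gluing the various modifications into one jointly continuous field on which the a.s.\ identities hold simultaneously for all $(t,x,h)$ --- the latter handled by comparing two continuous fields on a countable dense set); your version makes these explicit and is essentially a self-contained derivation of the cited theorem. One small point of bookkeeping: in Step 1 you use the moment and H\"older bounds for $g_s=f_{ij,s}$ itself, i.e.\ for $k=0$, whereas the lemma's wording lists them only for $1\le k\le m$; this is clearly the intended reading (the paper's own verification of Kunita's hypotheses needs it as well), but it is worth stating that you assume the conditions for $0\le k\le m$.
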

\begin{proof}
For any $i,j=1,2,\cdots,p$, $f_{ij,s}(x)$ is a measurable random field satisfying that $f_{ij,s}(x)$ is $m$-times continuously differentiable in $x$ for all $s$ a.s. and for any $x,y\in\Lambda$, $q>2$ and $0<\gamma\leq 1$, we have
$$
\int_0^T {E[|{\partial^k}{f_{ij,s}}(x){|^q}]} ds \le \int_0^T {E[|{\partial^k}{f_s}(x){|^q}]} ds \le {C_1}(q)
$$
and
$$
\int_0^T {E[|{\partial^k}{f_{ij,s}}(x) - {\partial^k}{f_{ij,s}}(y){|^q}]} ds \le \int_0^T {E[|{\partial^k}{f_s}(x) - {\partial^k}{f_s}(y){|^q}]} ds \le {C_2}(q)|x - y{|^{\gamma q}}.
$$
By Theorem 10.6 in \cite{Kunita82}, there are modifications of the integrals $\int_0^t {{f_{ij,s}}(x)d{B_{uv,s}}}$ and $\int_0^t {{f_{ij,s}}(x)ds}$  which are continuous at $(t,x)$ and $m$-times continuously differentiable in $x$ for all $t$ a.s.  Moreover, the modifications satisfy
\begin{equation*}
{\partial^k}\int_0^t {{f_{ij,s}}(x)d{B_{uv,s}}}  = \int_0^t {{\partial^k}{f_{ij,s}}(x)d{B_{uv,s}}},\quad {\partial^k}\int_0^t {{f_{ij,s}}(x)d{s}}  = \int_0^t {{\partial^k}{f_{ij,s}}(x)d{s}} \quad \forall k\le m.
\end{equation*}
As we have
\[\int_0^t {{f_s}(x)d{B_s}}  = {\left(\sum\limits_{u = 1}^p {\int_0^t {{f_{iu,s}}(x)d{B_{uj,s}}} } \right)_{1 \le i,j \le p}}\]
and
\[\int_0^t {{f_s}(x)d{s}}  = {\left({\int_0^t {{f_{ij,s}}(x)d{s}} } \right)_{1 \le i,j \le p}},\]
the lemma is easily proved. \hfill $\blacksquare$
\end{proof}

\begin{remark}\label{remark2.4}
 Let $D$ be the operator of Fr\'{e}chet derivative (\cite{Ciarlet}). It is obvious that $f_s:\mathbb{R}\rightarrow \mathbb{R}^{p\times p}$ is an abstract function and its derivative is a special case of the Fr\'{e}chet derivative. For any differentiable maps $g:\mathbb{R}^{p\times p}\rightarrow \mathbb{R}^{p\times p}$, by the chain rule of composite functions (Theorem 7.1-3 in \cite{Ciarlet}), we have
 \[\partial(g\circ f_s)(x)=Dg(f_s(x))(\partial f_s(x)),\]
 where $Dg(f_s(x))\in \mathcal{L}(\mathbb{R}^{p\times p}, \mathbb{R}^{p\times p})$ is the Fr\'{e}chet derivative of $g$ at $f_s(x)$. Similar representations can be used for higher derivatives. For simplicity, the norm of $Dg(f_s(x))$ is still denoted by $|Dg(f_s(x))|$.
\end{remark}

\begin{lemma}\label{ito}
(Generalized It\^{o} Formula) Let $F_t(x)$  ($x\in\mathbb{R}, t\in[0,T]$) be a random field in $\mathcal{M}_{p,p}( \mathbb{R})$ which is continuous in $(t,x)$ a.s. such that
\begin{itemize}
\item[$(a)$]$F_t(x)$ is twice continuously differentiable in $x$ a.s.;
\item[$(b)$]For every $x$, $F_t(x)$ is a following continuous semimartingale:
\[{F_t}(x) = {F_0}(x) + \int_0^t {{f_s}(x)} d{Y_s}, \; a.s. ,\]
where $Y_s$ is a continuous semimartingale in $\mathcal{M}_{p,p}( \mathbb{R})$ and ${f_s}(x)$ is a random field in $\mathcal{M}_{p,p}( \mathbb{R})$ which is  continuous in $(s,x)$ a.s. satisfying ${f_s}(x)$ is twice continuously differentiable adapted process for each $x$ a.s.
\end{itemize}
Let $X_t$ be a continuous semimartingale in $\mathbb{R}$. Then
\begin{eqnarray*}
{F_t}({X_t}) &=& {F_0}({X_0}) + \int_0^t {{f_s}({X_s})} d{Y_s} + \int_0^t {{\partial_x}{F_s}({X_s})} d{X_s}\\
 &&\mbox{}+ \int_0^t {{\partial_x}{f_s}({X_s})} d{\langle Y,X{I_p}\rangle _s} + \frac{1}{2}\int_0^t {\partial_x^2{F_s}({X_s})} d{\langle X\rangle _s}.
\end{eqnarray*}
\begin{proof}
It is obvious that, for $i,j=1,2,\cdots,p$,
\[{F_{ij,t}}(x) = {F_{ij,0}}(x) + \sum\limits_{k = 1}^p {\int_0^t {{f_{ik,s}}(x)} d{Y_{kj,s}}}. \]
By generalized It\^{o} formula (see Theorem 8.1 in \cite{Kunita82}), we have
\begin{eqnarray*}
{F_{ij,t}}({X_t}) &=& {F_{ij,0}}({X_0}) + \sum\limits_{k = 1}^p {\int_0^t {{f_{ik,s}}({X_s})} d{Y_{kj,s}}}  + \int_0^t {\partial_x}{F_{ij,s}}({X_s}) d{X_s}\\
 &&\mbox{} +\sum\limits_{k = 1}^p {\int_0^t \partial_x {f_{ik,s}}({X_s})} d\langle {Y_{kj}}, X{\rangle _s} + \frac{1}{2}\int_0^t \partial_x^2 {F_{ij,s}}({X_s}) d{\langle X\rangle _s},
\end{eqnarray*}
which is just what we need.\hfill $\blacksquare$
\end{proof}
\end{lemma}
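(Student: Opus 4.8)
The plan is to reduce this matrix‑valued It\^{o}--Wentzell formula to the scalar version (Theorem~8.1 in \cite{Kunita82}) applied entry by entry, and then to reassemble the matrix identity — the same componentwise strategy already used in the proof of Lemma~\ref{lemma2.2}. First I would pass to entries: by the definition of the matrix stochastic integral, hypothesis $(b)$ says precisely that for every $i,j\in\{1,\dots,p\}$ and every $x$,
\[
F_{ij,t}(x)=F_{ij,0}(x)+\sum_{k=1}^{p}\int_0^t f_{ik,s}(x)\,dY_{kj,s},
\]
so each $F_{ij,t}(x)$ is a scalar continuous semimartingale whose integrand against $Y_{kj}$ is the random field $f_{ik,s}(x)$. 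Since continuity and twice differentiability in the Frobenius norm are equivalent to the corresponding entrywise statements, $(a)$ makes $F_{ij,t}(x)$ continuous in $(t,x)$ and $C^{2}$ in $x$ a.s., and $(b)$ makes each $f_{ik,s}(x)$ an adapted random field, continuous in $(s,x)$ and $C^{2}$ in $x$ a.s.; these are exactly the hypotheses needed by the scalar theorem.

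Next I would apply the scalar generalized It\^{o} formula of Theorem~8.1 in \cite{Kunita82}, which covers random fields driven by finitely many semimartingales, to $F_{ij,t}(X_t)$ along the scalar continuous semimartingale $X_t$. This gives, for each $i,j$,
\[
F_{ij,t}(X_t)=F_{ij,0}(X_0)+\sum_{k=1}^p\int_0^t f_{ik,s}(X_s)\,dY_{kj,s}+\int_0^t \partial_x F_{ij,s}(X_s)\,dX_s+\sum_{k=1}^p\int_0^t \partial_x f_{ik,s}(X_s)\,d\langle Y_{kj},X\rangle_s+\frac{1}{2}\int_0^t \partial_x^2 F_{ij,s}(X_s)\,d\langle X\rangle_s.
\]

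Then I would reassemble the matrix identity by recognising each family of entries as the $(i,j)$‑entry of a matrix object: the first sum is $\left(\int_0^t f_s(X_s)\,dY_s\right)_{ij}$; $\partial_x F_{ij,s}$ and $\partial_x^2 F_{ij,s}$ are the entries of $\partial_x F_s$ and $\partial_x^2 F_s$; the last term carries the scalar bracket $d\langle X\rangle_s$ unchanged; and the cross term $\sum_k\int_0^t \partial_x f_{ik,s}(X_s)\,d\langle Y_{kj},X\rangle_s$ is the $(i,j)$‑entry of the matrix integral $\int_0^t \partial_x f_s(X_s)\,d\langle Y,XI_p\rangle_s$, because by the definition of the matrix quadratic covariation the $(k,j)$‑entry of $\langle Y,XI_p\rangle_s$ equals $\sum_\ell\langle Y_{k\ell},(XI_p)_{\ell j}\rangle_s=\langle Y_{kj},X\rangle_s$. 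Collecting the $p^{2}$ scalar identities into one matrix equation then yields the asserted formula.

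I expect the only genuine difficulty to be bookkeeping rather than analysis. The delicate point is matching the matrix notation on the right‑hand side: one must see that inserting the factor $XI_p$ inside the quadratic covariation and then multiplying on the left by $\partial_x f_s(X_s)$ reproduces exactly the entrywise cross terms $\sum_k\int \partial_x f_{ik,s}(X_s)\,d\langle Y_{kj},X\rangle_s$. By contrast, checking that the regularity and adaptedness conditions required by the scalar theorem descend from $(a)$ and $(b)$ is immediate once one observes that they are entrywise in nature.
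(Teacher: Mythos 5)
Your proposal is correct and follows essentially the same route as the paper: pass to entries, apply the scalar generalized It\^{o} formula of Theorem 8.1 in \cite{Kunita82} to each $F_{ij,t}(X_t)$, and reassemble the $p^2$ scalar identities into the matrix statement, noting that the $(k,j)$-entry of $\langle Y,XI_p\rangle$ is $\langle Y_{kj},X\rangle$. Your additional bookkeeping of the cross term is exactly the (implicit) verification the paper relies on, so no gap remains.
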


\begin{definition}
(\cite{Mayerhofer}) Let $H$ be (a subset of) a normed space whose norm is denoted by $||\cdot||_H$ (if $H$ is $\mathcal{M}_{n,p}(\mathbb{R})$, then we still use $|\cdot|$).  A function $h:\mathbb{R}_+\times\mathcal{S}_{p}^+(\mathbb{R})\rightarrow H$ is said to be locally Lipschitz if
\[||h(t,X)-h(t,Y)||_H\leq C(U)|X-Y|\]
for any $t\in \mathbb{R}_+$, all compact sets $U\subseteq H$ and any $X,Y\in U$, where $C(U)$ is a constant depending on $U$. $h$ is said to be of linear growth if
\[||h(t,X)||_H^2\leq C(1+|X|^2)\]
for any $t\in \mathbb{R}_+$ and any $X\in \mathcal{S}_{p}^+(\mathbb{R})$.
\end{definition}

The following lemma is crucial in our paper.
\begin{lemma}\label{lem2.4}
(\cite{Mayerhofer}) Let $B_t$ be a Brownian matrix in $\mathcal{M}_{p,p}(\mathbb{R})$. Let $F,G: \mathbb{R}_+\times \mathcal{S}_{p}^+(\mathbb{R})\rightarrow \mathcal{M}_{p,p}(\mathbb{R})$ be two measurable functions such that $G'\otimes F(t,X)=(G(t,X))'\otimes F(t,X)$ is locally Lipschitz and of linear growth. Let $J: \mathbb{R}_+\times \mathcal{S}_{p}^+(\mathbb{R})\rightarrow \mathcal{S}_{p}(\mathbb{R})$ be locally Lipschitz and of linear growth. Assume that there exists a locally integrable function $c:\mathbb{R}_+\rightarrow \mathbb{R}$, i.e. $\int_0^a {|c(s)|ds}  <  + \infty $ for each $a\in \mathbb{R}_+$, such that
\begin{align}\label{lem2.4_eq1}
c(t)\leq &\mathrm{Tr}[J(t,X) X^{-1}]-\mathrm{Tr}[f(t,X) X^{-1}]\mathrm{Tr}[ g(t,X) X^{-1}]\nonumber\\
&\mbox{}-\mathrm{Tr}[f(t,X)X^{-1} g(t,X) X^{-1}]
\end{align}
for each $t\in \mathbb{R}_+$ and $X\in \mathcal{S}_{p}^+(\mathbb{R})$, where $f(t,X)=F(t,X) F(t,X)'$ and $g(t,X)=G(t,X)' G(t,X)$.

Then the stochastic differential equation (SDE)
\begin{equation}\label{lem2.4_eq2}
dX_t=F(t,X_t)dB_t G(t,X_t)+G(t,X_t)' dB_t' F(t,X_t)'+J(t,X_t)dt
\end{equation}
with $X_0=x\in \mathcal{S}_{p}^+(\mathbb{R}),a.s.$ has a unique adapted continuous strong solution $(X_t)_{t\in \mathbb{R}_+}$ on $\mathcal{S}_{p}^+(\mathbb{R})$.

In particular, the stopping time $T_x=\inf\{t\geq 0:X_t \notin \mathcal{S}_{p}^+(\mathbb{R})\}=+\infty,a.s.$.
\end{lemma}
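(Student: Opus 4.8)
The plan is to carry out the classical three-step programme for an It\^o equation confined to a domain, here the open cone $\mathcal{S}_{p}^+(\mathbb{R})$: (i) produce a unique maximal local solution living in the cone; (ii) rule out explosion to infinity in finite time; (iii) rule out hitting the boundary $\{X:\det X=0\}$, so that the lifetime is $+\infty$. Only step (iii) uses hypothesis (\ref{lem2.4_eq1}), and it carries essentially all the content. For step (i), apply $\mathrm{vec}(\cdot)$ together with $\mathrm{vec}(AYB)=(B'\otimes A)\mathrm{vec}(Y)$ to rewrite (\ref{lem2.4_eq2}) as an $\mathbb{R}^{p^{2}}$-valued It\^o SDE with drift $\mathrm{vec}(J(t,X))$ and diffusion coefficient assembled from $G'\otimes F(t,X)$ and its transpose (the transpose inherits local Lipschitz continuity and linear growth since transposition is a linear isometry). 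These coefficients are locally Lipschitz and of linear growth on $\mathcal{S}_{p}^+(\mathbb{R})$; truncating them outside the compact sets $K_{n}=\{X\in\overline{\mathcal{S}_{p}^+}(\mathbb{R}):|X|\le n,\ \lambda_{\min}(X)\ge 1/n\}$ to globally Lipschitz coefficients and invoking the classical existence and uniqueness theorem for Lipschitz SDEs (see \cite{Karatzas} or \cite{Oksendal}) gives, for each $n$, a unique continuous adapted solution of the truncated equation; by uniqueness these coincide up to the exit time of $\mathrm{int}\,K_{n}$, and patching them yields the unique solution $X$ of (\ref{lem2.4_eq2}) on $[0,T_{x})$, with $T_{x}=\lim_{n}\tau_{n}$ and $\tau_{n}=\inf\{t:X_{t}\notin\mathrm{int}\,K_{n}\}$. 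Since the martingale part of (\ref{lem2.4_eq2}) is a matrix plus its transpose and $J$ is $\mathcal{S}_{p}(\mathbb{R})$-valued, $X_{t}-X_{t}'$ satisfies a linear equation with zero initial value, so $X_{t}$ is symmetric, and $X_{t}\in\mathcal{S}_{p}^+(\mathbb{R})$ on $[0,T_{x})$ by construction.

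For step (ii), apply It\^o's formula to $|X_{t\wedge\tau_{n}}|^{2}=\mathrm{Tr}(X_{t\wedge\tau_{n}}^{2})$. Linear growth of $J$ and of $G'\otimes F$ bounds $|J|$, $|f|$, $|g|$ and the quadratic variation of the martingale part by $C(1+|X|^{2})$, so combining the martingale-moment and H\"older estimates of Remark \ref{remark2.1} gives, for $t\le T$, an inequality $E\big[\sup_{s\le t\wedge\tau_{n}}|X_{s}|^{2}\big]\le C_{T}\big(1+\int_{0}^{t}E\big[\sup_{u\le s\wedge\tau_{n}}|X_{u}|^{2}\big]\,ds\big)$, whence Gronwall's lemma bounds the left side uniformly in $n$; letting $n\to\infty$ shows $\sup_{s\le t\wedge T_{x}}|X_{s}|<\infty$ a.s. Hence on $\{T_{x}<\infty\}$ the exit from $K_{n}$ cannot eventually be caused by $|X|=n$, so almost surely on $\{T_{x}<\infty\}$ one has $\lambda_{\min}(X_{t})\to0$, i.e. $\det X_{t}\to0$, as $t\uparrow T_{x}$.

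Step (iii) is the crux. Fix $M>0$, $T>0$ and put $\sigma_{M}=\inf\{t:|X_{t}|\ge M\}$; by step (ii), $\sigma_{M}\uparrow\infty$ a.s. On $[0,\sigma_{M}\wedge\tau_{n}]$ apply It\^o's formula to $\varphi(X)=\log\det X$, which is $C^{\infty}$ on the open cone, using $D\varphi(X)[H]=\mathrm{Tr}(X^{-1}H)$ and $D^{2}\varphi(X)[H,H]=-\mathrm{Tr}(X^{-1}HX^{-1}H)$, and writing the martingale part of (\ref{lem2.4_eq2}) as $\sum_{a,b}E^{(ab)}\,dB_{ab}$ with $E^{(ab)}=Fe_{a}e_{b}'G+(Fe_{a}e_{b}'G)'$ ($e_{a}$ the standard basis vectors used above). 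A short computation using cyclicity of the trace collapses the It\^o correction $-\tfrac12\sum_{a,b}\mathrm{Tr}(X^{-1}E^{(ab)}X^{-1}E^{(ab)})$ into $-\mathrm{Tr}(fX^{-1}gX^{-1})-\mathrm{Tr}(fX^{-1})\mathrm{Tr}(gX^{-1})$, so the finite-variation part of $\log\det X_{t}$ has density
\[\mathrm{Tr}[J(t,X_{t})X_{t}^{-1}]-\mathrm{Tr}[f(t,X_{t})X_{t}^{-1}]\,\mathrm{Tr}[g(t,X_{t})X_{t}^{-1}]-\mathrm{Tr}[f(t,X_{t})X_{t}^{-1}g(t,X_{t})X_{t}^{-1}]\ \ge\ c(t)\]
by (\ref{lem2.4_eq1}). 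Thus $Z_{t}:=\log\det X_{t}-\int_{0}^{t}c(s)\,ds$ is a local submartingale on $[0,T_{x})$, so $-Z_{t}$ is a local supermartingale; on $[0,\sigma_{M}\wedge\tau_{n}\wedge T]$ it is bounded below, since $-\log\det X_{t}\ge-p\log M$ there and $\int_{0}^{t}c\ge-\int_{0}^{T}|c(s)|\,ds$, hence it is a true supermartingale. Optional stopping gives $E\big[-\log\det X_{t\wedge\sigma_{M}\wedge\tau_{n}}\big]\le-\log\det x+\int_{0}^{T}|c(s)|\,ds$. On $\{\tau_{n}\le t\wedge\sigma_{M}\}$ the exit (for $n>M$) must be through $\lambda_{\min}(X_{\tau_{n}})=1/n$, so $\det X_{\tau_{n}}\le n^{-1}M^{p-1}$ and $-\log\det X_{\tau_{n}}\ge\log n-(p-1)\log M$, while on the complement $-\log\det X\ge-p\log M$; combining these bounds,
\[P(\tau_{n}\le t\wedge\sigma_{M})\ \le\ \frac{-\log\det x+\int_{0}^{T}|c(s)|\,ds+p\log M}{\log n-(p-1)\log M},\]
which tends to $0$ as $n\to\infty$. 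Hence $P(T_{x}\le t\wedge\sigma_{M})=0$; letting $M\to\infty$ and $T\to\infty$ shows $X_{t}\in\mathcal{S}_{p}^+(\mathbb{R})$ for all $t$, i.e. $T_{x}=+\infty$ a.s., which together with the uniqueness from step (i) completes the proof.

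The single delicate point is step (iii): recognising that the It\^o drift of $\log\det X_{t}$ is \emph{exactly} the right-hand side of (\ref{lem2.4_eq1}) — which is what dictates the unusual shape of that hypothesis — and then converting the fact that a boundary hit forces $\log\det X_{t}\to-\infty$ into a contradiction with the supermartingale bound via the McKean-type estimate above. Steps (i) and (ii) are routine localisation-and-Gronwall arguments once the equation has been vectorised.
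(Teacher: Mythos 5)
Your proposal is correct, but it is not what the paper does: the paper's entire ``proof'' of Lemma \ref{lem2.4} is a one-sentence reduction to Theorem 3.4 of \cite{Mayerhofer} (specialized to no jumps), whereas you reconstruct, self-contained, essentially the argument underlying that cited theorem. Your three-step scheme (localized Lipschitz existence and pathwise uniqueness inside the cone, Gronwall non-explosion, and the McKean-type $\log\det$ supermartingale estimate to exclude boundary hitting) is sound, and the crucial It\^o computation is right: writing the martingale part as $\sum_{a,b}E^{(ab)}\,dB_{ab}$ with $E^{(ab)}=Fe_ae_b'G+(Fe_ae_b'G)'$, one gets $\sum_{a,b}\mathrm{Tr}\bigl(X^{-1}E^{(ab)}X^{-1}E^{(ab)}\bigr)=2\bigl[\mathrm{Tr}(fX^{-1}gX^{-1})+\mathrm{Tr}(fX^{-1})\mathrm{Tr}(gX^{-1})\bigr]$, so the finite-variation density of $\log\det X_t$ is exactly the right-hand side of (\ref{lem2.4_eq1}), and your optional-stopping bound $P(\tau_n\le t\wedge\sigma_M)\le\bigl(-\log\det x+\int_0^T|c(s)|ds+p\log M\bigr)/\bigl(\log n-(p-1)\log M\bigr)$ then kills boundary hitting. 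One cosmetic point: after vectorisation the diffusion coefficient is $(I_{p^2}+K_{p,p})\,(G'\otimes F)$, where $K_{p,p}$ is the commutation matrix with $\mathrm{vec}(A')=K_{p,p}\mathrm{vec}(A)$, rather than literally ``$G'\otimes F$ and its transpose''; since $K_{p,p}$ is a fixed orthogonal matrix this does not affect the local Lipschitz or linear-growth claims. What the paper's citation buys is brevity and the extra jump-diffusion generality of \cite{Mayerhofer}; what your route buys is a complete, checkable proof that makes transparent why hypothesis (\ref{lem2.4_eq1}) has precisely the form it does.
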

\begin{proof}
The lemma easily follows from the Theorem 3.4 without jump diffusions in \cite{Mayerhofer}.\hfill $\blacksquare$
\end{proof}

\section{Fractional Wishart processes with integer indices}\label{section3}\noindent
\setcounter{equation}{0}
 Throughout, like the Brownian matrix, we shall define a fractional Brownian matrix $B_t^H$ with the Hurst index $H$ as a process taking its values in $\mathcal{M}_{n,p}( \mathbb{R})$ whose components are independent fractional Brownian motions, i.e.
\[B_t^H=(B_{ij}^H(t)),\quad B_0^H=(B_{ij}^H(0))=C, \; a.s.\]
where $C\in \mathcal{M}_{n,p}( \mathbb{R})$ is the initial state.

\begin{definition}\label{def3.1}
A fractional Wishart process, of Hurst index $H$, index $n$, dimension $p$ ($p\leq n$) and initial state $\Sigma_0$, written as $fWIS(H,n,p,\Sigma_0)$, is the matrix process
\begin{equation}\label{eq3.2}
{\Sigma _t} = ({\Sigma _{ij}}(t)) = {(B_t^H)'}B_t^H, \quad {\Sigma _0} = {C'}C>0, \; a.s.
\end{equation}
\end{definition}

\begin{remark}
Similar to the work in \cite{Bru}, we can investigate the process of eigenvalues of a fractional Wishart process with an integer index $n$ under some conditions. For more details, we refer to \cite{Pardo}.
\end{remark}

From Definition {\ref{def3.1}}, it is easy to see that a fractional Wishart process is a Wishart process in the sense of Bru (\cite{Bru}) when $H=\frac{1}{2}$ and we will see that a fractional Wishart process takes its values with probability one in $\mathcal{S}_{p}^+( \mathbb{R})$ and that for a fixed $t$, the random variable $\Sigma _t$ has a noncentral Wishart distribution.

\begin{lemma}\label{lemma3.1}
For a fixed $t>0$, \[\Sigma _t>0, \; a.s.\]
\[\  {\Sigma _t} \sim {W_p}(n,{t^{2H}}{I_p},{{t^{-2H}}\Sigma _0}).\]
Consequently, its p.d.f is given by
\[{\left\{ {2^{\frac{1}{2}np}}{\Gamma _p}\left(\frac{1}{2}n\right){t^{npH}}\right\} ^{ - 1}}\mathrm{etr}\left( - \frac{1}{2}{t^{ - 2H}}{\Sigma _0} - \frac{1}{2}{t^{ - 2H}}{\Sigma _t}\right)\det {({\Sigma _t})^{\frac{1}{2}(n - p - 1)}}{}_0{F_1}\left(\frac{1}{2}n;\frac{1}{4}{t^{ - 4H}}{\Sigma _0}{\Sigma _t}\right),\]
and its Laplace transform is given by
\begin{equation}\label{eq3.3}
E[\mathrm{etr}( - Z{\Sigma _t})] = \det {\left({I_p} + 2{t^{2H}}Z\right)^{ - \frac{n}{2}}}\mathrm{etr}\left( - Z{({I_p} + 2{t^{2H}}Z)^{ - 1}}{\Sigma _0}\right),
\end{equation}
for any $Z \in \mathcal{S}^+_{p}( \mathbb{R})$.
\end{lemma}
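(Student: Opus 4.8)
The plan is to exploit that, for a \emph{fixed} $t>0$, the fractional Brownian matrix $B_t^H$ is nothing but a matrix-normal random matrix, after which the three assertions reduce to (i) a measure-zero argument for positivity, (ii) the classical representation of the noncentral Wishart law, and (iii) an elementary Gaussian integral for the Laplace transform.

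First I would record that, by Definition \ref{d2.3}, each entry $B_{ij}^H(t)$ is Gaussian with variance $\tfrac12(t^{2H}+t^{2H}-|t-t|^{2H})=t^{2H}$ and, by the initial-state convention $B_0^H=C$ a.s., mean $C_{ij}$, and that the entries are mutually independent; hence $B_t^H$ has the matrix-normal density $(2\pi t^{2H})^{-np/2}\,\mathrm{etr}\!\left(-\tfrac{1}{2t^{2H}}(B-C)'(B-C)\right)$ on $\mathcal{M}_{n,p}(\mathbb{R})\cong\mathbb{R}^{np}$. Positivity is then immediate: $\Sigma_t=(B_t^H)'B_t^H$ always lies in $\overline{\mathcal{S}_p^+}(\mathbb{R})$, and it lies in $\mathcal{S}_p^+(\mathbb{R})$ exactly when $B_t^H$ has full column rank $p$; since $n\geq p$ and $B_t^H$ is absolutely continuous on $\mathbb{R}^{np}$, the rank-deficient matrices form a set of Lebesgue measure zero, so $\Sigma_t>0$ a.s.

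To identify the law, write $c_i'$ for the $i$-th row of $C$ and let $v_i\in\mathbb{R}^p$ be the $i$-th row of $B_t^H$; then $v_1,\dots,v_n$ are independent, $v_i\sim N_p(c_i,t^{2H}I_p)$, and $\Sigma_t=\sum_{i=1}^n v_iv_i'$. By the standard characterization of the noncentral Wishart distribution as a sum of outer products of independent Gaussian vectors (\cite{Gupta}), this gives $\Sigma_t\sim W_p\!\big(n,\,t^{2H}I_p,\,(t^{2H}I_p)^{-1}\sum_i c_ic_i'\big)=W_p(n,t^{2H}I_p,t^{-2H}\Sigma_0)$, using $\sum_i c_ic_i'=C'C=\Sigma_0$. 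Substituting $\Sigma=t^{2H}I_p$ and $\Theta=t^{-2H}\Sigma_0$ into the density of Definition \ref{wd} and simplifying with $\det(t^{2H}I_p)^{n/2}=t^{npH}$, $\Sigma^{-1}=t^{-2H}I_p$, $\Theta\Sigma^{-1}=t^{-4H}\Sigma_0$ yields precisely the stated p.d.f.

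For the Laplace transform I would argue directly. For $Z\in\mathcal{S}_p^+(\mathbb{R})$, $\mathrm{Tr}(Z\Sigma_t)=\mathrm{Tr}\big(B_t^HZ(B_t^H)'\big)=\sum_{i=1}^n v_i'Zv_i$, so independence gives $E[\mathrm{etr}(-Z\Sigma_t)]=\prod_{i=1}^n E[\exp(-v_i'Zv_i)]$; completing the square in each Gaussian integral gives, for $v\sim N_p(c,t^{2H}I_p)$, $E[\exp(-v'Zv)]=\det(I_p+2t^{2H}Z)^{-1/2}\exp\!\big(-c'Z(I_p+2t^{2H}Z)^{-1}c\big)$, and taking the product and collapsing $\sum_i c_ic_i'=\Sigma_0$ inside the trace produces \eqref{eq3.3}. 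By uniqueness of Laplace transforms this step alone would already pin down the distribution, so it could instead serve as the logical backbone. I expect the only real friction to be bookkeeping: fixing $E[B_t^H]=C$ (rather than $0$) in accordance with the ``initial state'' wording of Definition \ref{d2.3}, and tracking the factors of $2$ and the particular normalization of the noncentrality parameter in Definition \ref{wd}; the only genuinely technical alternative---pushing the matrix-normal density directly through $B\mapsto B'B$, which needs the matrix-variate Jacobian written as an integral over the Stiefel manifold---I would avoid by appealing to \cite{Gupta}.
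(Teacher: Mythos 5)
Your proof is correct and follows essentially the same route as the paper: for fixed $t$ you identify $B_t^H$ as a matrix-normal $N_{n,p}(C,\,I_n\otimes(t^{2H}I_p))$ random matrix and then pass to the noncentral Wishart law $W_p(n,t^{2H}I_p,t^{-2H}\Sigma_0)$ via the classical theory in \cite{Gupta}. The only difference is that where the paper simply cites Theorems 3.2.1 and 3.5.1 of \cite{Gupta} for the a.s.\ positive definiteness and for the p.d.f./Laplace transform, you carry out those steps by hand (the full-rank null-set argument and the row-wise Gaussian integral with completion of the square), which is equally valid.
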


\begin{proof}
Firstly, for a fixed $t>0$, we give the distribution of the fractional Brownian matrix $B_t^H$.

As before, we denote $e_1=(1,0,\cdots,0)',\cdots,e_n=(0,\cdots,0,1)'$  in  $\mathcal{M}_{n,1}(\mathbb{R})$ and $f_1=(1,0,\cdots,0)'$, $\cdots$, $f_p=(0,\cdots,0,1)'$ in $\mathcal{M}_{p,1}(\mathbb{R})$. Then
\[\mathrm{vec}[(B_t^H)'] = \left( {\begin{array}{*{20}{c}}
{(B_t^H)'{e_1}}\\
 \vdots \\
{(B_t^H)'{e_n}}
\end{array}} \right)\]
and
\begin{eqnarray}\label{eq3.4}
&&\mathrm{cov} (\mathrm{vec}[(B_t^H)'])\nonumber\\
& =& E\{ \mathrm{vec}[(B_t^H)'] - E\mathrm{vec}[(B_t^H)']\} \{ \mathrm{vec}[(B_t^H)'] - E\mathrm{vec}[(B_t^H)']\} '\nonumber\\
 &=& E{((B_t^H-EB_t^H)'{e_i}{e_j}'(B_t^H-EB_t^H))_{1 \le i,j \le n}}
\end{eqnarray}
which is partitioned by $n^2$ matrices of $\mathcal{M}_{p,p}( \mathbb{R})$. For any $1 \le i,j \le n$ and $1 \le k,l \le p$, the $(k,l)^{th}$ component of the $(i,j)^{th}$ matrix of (\ref{eq3.4}) is calculated by the definition of fractional Brownian motion  as follows:
\begin{eqnarray*}
E({f_k}'(B_t^H - EB_t^H)'{e_i}{e_j}'(B_t^H - EB_t^H){f_l})= \left\{ {\begin{array}{ll}
0,& i \ne j\  or\  k \ne l;\\
{t^{2H}},& i = j,k = l.
\end{array}} \right.
\end{eqnarray*}
Thus, we have
\[ \mathrm{cov}(\mathrm{vec}[(B_t^H)']) = {I_n} \otimes ({t^{2H}}{I_p}).\]
Then it is not hard to check that
\[\mathrm{vec}[(B_t^H)'] \sim {N_{np}}(\mathrm{vec}(C'),{I_n} \otimes ({t^{2H}}{I_p})).\]
By the definition of the matrix variate normal distribution (see \cite{Gupta}), we obtain
\[B_t^H \sim {N_{n,p}}(C,{I_n} \otimes ({t^{2H}}{I_p})).\]

Finally, by Theorems 3.2.1 and 3.5.1 of \cite{Gupta}, it follows that
\[\Sigma _t>0,\quad {\Sigma _t} \sim {W_p}(n,{t^{2H}}{I_p},{{t^{-2H}}\Sigma _0}).\]
Then by Definition \ref{wd} and the proof of Theorem 3.5.1 in \cite{Gupta}, we can easily get its p.d.f and Laplace transform.  \hfill $\blacksquare$

\end{proof}

Similar to the Wishart processes, the fractional Wishart processes have the following additivity property.

\begin{theorem}
If $\Sigma_t$ and $S_t$ are two independent fractional Wishart processes $fWIS(H,n,p,\Sigma_0)$ and $fWIS(H,m,p,S_0)$, respectively, then $\Sigma_t+S_t$ is a fractional Wishart process
$$fWIS(H,n+m,p,\Sigma_0+S_0).$$
\end{theorem}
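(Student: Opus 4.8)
The plan is to realize $\Sigma_t+S_t$ directly as a quadratic form $(\widehat B_t^H)'\widehat B_t^H$ built from a single fractional Brownian matrix of size $(n+m)\times p$, and then read off the conclusion from Definition \ref{def3.1}. By that definition we may write $\Sigma_t=(B_t^H)'B_t^H$ with $B_t^H\in\mathcal{M}_{n,p}(\mathbb{R})$, $B_0^H=C$, $\Sigma_0=C'C$, and $S_t=(\widetilde B_t^H)'\widetilde B_t^H$ with $\widetilde B_t^H\in\mathcal{M}_{m,p}(\mathbb{R})$, $\widetilde B_0^H=\widetilde C$, $S_0=\widetilde C'\widetilde C$; since the two fractional Wishart processes are independent, we take the driving fractional Brownian matrices $B_t^H$ and $\widetilde B_t^H$ to be independent.

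First I would stack the two driving matrices,
\[\widehat B_t^H=\begin{pmatrix}B_t^H\\ \widetilde B_t^H\end{pmatrix}\in\mathcal{M}_{n+m,p}(\mathbb{R}),\qquad \widehat B_0^H=\begin{pmatrix}C\\ \widetilde C\end{pmatrix},\]
and verify that $\widehat B_t^H$ is a fractional Brownian matrix of Hurst index $H$ in the sense fixed at the beginning of Section \ref{section3}, i.e.\ that its $(n+m)p$ entries are independent fractional Brownian motions of index $H$. The entries inside the top block are such because $B_t^H$ is a fractional Brownian matrix, those inside the bottom block because $\widetilde B_t^H$ is one, and any entry of the top block is independent of any entry of the bottom block by the independence of $B_t^H$ and $\widetilde B_t^H$; hence the whole family is mutually independent.

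Next I would invoke the rule for multiplying block matrices: since $(\widehat B_t^H)'$ is the block row $\bigl((B_t^H)',\ (\widetilde B_t^H)'\bigr)$, one has
\[(\widehat B_t^H)'\widehat B_t^H=(B_t^H)'B_t^H+(\widetilde B_t^H)'\widetilde B_t^H=\Sigma_t+S_t,\]
and likewise $(\widehat B_0^H)'\widehat B_0^H=C'C+\widetilde C'\widetilde C=\Sigma_0+S_0$. Because $\Sigma_0>0$ and $S_0>0$ a.s., the sum $\Sigma_0+S_0$ is symmetric positive definite a.s., and since $p\le n\le n+m$ the index/dimension constraint of Definition \ref{def3.1} is met. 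Thus $\widehat B_t^H$ exhibits $\Sigma_t+S_t$ in exactly the form \eqref{eq3.2}, with Hurst index $H$, index $n+m$, dimension $p$ and initial state $\Sigma_0+S_0$, which is the assertion.

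There is no substantial analytic difficulty here; the argument is a bookkeeping one about concatenating matrices. The only point that genuinely needs care is the passage from ``the processes $\Sigma_t$ and $S_t$ are independent'' to ``the driving fractional Brownian matrices $B_t^H$ and $\widetilde B_t^H$ are independent'' (together with the attendant fact that stacking two independent families of independent fractional Brownian motions produces one family of independent fractional Brownian motions). I expect that to be essentially the entire content of the proof; everything else is immediate from the definitions.
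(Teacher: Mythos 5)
Your argument is correct and is essentially the paper's own proof: the paper likewise stacks the two driving fractional Brownian matrices into an $(n+m)\times p$ fractional Brownian matrix $E_t=(B_t',W_t')'$ and observes that $\Sigma_t+S_t=E_t'E_t$, which is the defining form \eqref{eq3.2}. Your extra care about passing from independence of the Wishart processes to independence of the driving matrices is a reasonable point, but it does not change the route.
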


\begin{proof}
Assume that $\Sigma_t=(B_t^H)'B_t^H$ and $S_t=(W_t^H)'W_t^H$, where $B_t^H$ and $W_t^H$ are $n\times p$ and $m\times p$ independent fractional brownian matrices with initial state $B_0^H$ and $W_0^H$, respectively. If we set $E_t=(B_t',W_t')'$, then $E_t$ is an $(n+m)\times p$ matrix of independent fractional Brownian matrix with initial state $(B_0',W_0')'$ and we can check that
\[{\Sigma _t} + {S_t} = (B_t^H)'B_t^H + (W_t^H)'W_t^H = {E_t}'{E_t},\]
which just proves our statement. \hfill $\blacksquare$

\end{proof}

\begin{remark}\label{remark3.2}
By L\'{e}vy continuity Theorem (see, for example, Theorem 1.1.15 in \cite{Applebaum}), we know that (\ref{eq3.3}) is still a Laplace transform of a stochastic process whenever $n>0$ is no longer an integer. Nevertheless, we can not easily verify the properties of this process, for example, the property of positive definite. We shall still denote the stochastic process as $fWIS^*(H,v,p,\Sigma_0)$. When $n$ is a positive integer, it is easy to see that $fWIS^*(H,n,p,\Sigma_0)$ is an extension of   $fWIS(H,n,p,\Sigma_0)$ defined by (\ref{eq3.2}). Indeed, one has
$$
fWIS(H,n,p,\Sigma_0)\mathop  = \limits^d fWIS^*(H,n,p,\Sigma_0),
$$
where $\mathop  = \limits^d$  means that the two processes have same distribution for any $t>0$. However, we could not ensure
$$
fWIS(H,n,p,\Sigma_0)= fWIS^*(H,n,p,\Sigma_0),\quad a.s.
$$
\end{remark}

\begin{remark}
 Assume that the Hurst index $H\not=\frac{1}{2}$. As fractional Brownian motions exhibit the serial correlation, it is not hard to see that fractional Wishart processes also exhibit serial correlation, i.e. the increments of fractional Wishart processes depend on the information of the past. For example, taking $n=p=1$ in $fWIS(H,n,p,\Sigma_0)$,  we have
\[{\Sigma _t} - {\Sigma _s} = {(B_t^H)^2} - {(B_s^H)^2} = {(B_t^H - B_s^H)^2} + 2B_s^H(B_t^H - B_s^H)\]
for any $0<s<t$. As $B_t^H - B_s^H$ depends on the past $\mathcal{F}_s=\sigma\{B_u^H, 0\leq u\leq s\}$, we know that ${\Sigma _t} - {\Sigma _s}$ also depends on the past $\mathcal{F}_s$.
\end{remark}

\section{$\varepsilon$-Fractional Wishart processes}\label{section4}\noindent
\setcounter{equation}{0}

From \cite{Bru}, We know that the Wishart process is governed by a related SDE which can be used to describe some dynamic problems in applications. However, it seems to us that there is no way to develop SDE or SPDE to govern the fractional Wishart process directly.  Therefore, it is important to find some processes which not only maintain some properties of the fractional Wishart processes, but also can be governed by some related SDEs or SPDEs. To this end, in this section, we introduce the following $\varepsilon$-fractional Wishart processes which can be governed by some related SPDEs to approximate the fractional Wishart processes.

Define two processes in $\mathcal{M}_{n,p}( \mathbb{R})$ as follows:
\begin{equation}\label{eq4.1+}
B_t^{H,0} =\sqrt {2H}\int_0^t {{{(t - s)}^\alpha }} d{B_s}+C,
\end{equation}
\[B_t^{H,\varepsilon } = \sqrt {2H}\int_0^t {(t - s + \varepsilon )^\alpha} d{B_s} + C,\]
where $\alpha  = H - \frac{1}{2}$ and $B_t$ is a Brownian matrix in $\mathcal{M}_{n,p}( \mathbb{R})$ with initial state $C$ (i.e. $B_t=B_t^{\frac{1}{2}}$).

Next, we shall state two lemmas in the form of matrix from \cite{Thao} (for the case $\frac{1}{2}<H<1$) and \cite{Thao02} (for the case $0<H<\frac{1}{2}$). The proofs are trivial and so we omit them here.

\begin{lemma}\label{lemma4.1}
For any $\varepsilon>0$, $B_t^{H,\varepsilon }$ is a semimartingale satisfying
\begin{equation}\label{eq4.1}
dB_t^{H,\varepsilon } = \left(\sqrt {2H}\int_0^t {\alpha {{(t - s + \varepsilon )}^{\alpha  - 1}}} d{B_s}\right)dt + \sqrt {2H}{\varepsilon ^\alpha }d{B_t}.
\end{equation}
\end{lemma}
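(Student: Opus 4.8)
The plan is to reduce the claim to an application of the stochastic Fubini theorem, after rewriting the $t$-dependent kernel $(t-s+\varepsilon)^\alpha$ so that its dependence on the upper limit $t$ is made explicit through an ordinary integral. First I would record the elementary identity, valid for all $0\le s\le t$ and $\varepsilon>0$,
\[(t - s + \varepsilon)^\alpha = \varepsilon^\alpha + \int_s^t \alpha (u - s + \varepsilon)^{\alpha - 1}\, du,\]
which is just the fundamental theorem of calculus applied to $u\mapsto (u - s + \varepsilon)^\alpha$ on $[s,t]$. Here the condition $\varepsilon>0$ is essential: it keeps the base $u-s+\varepsilon$ bounded away from $0$, so the identity (and every integral below) makes sense even in the antipersistent regime $\alpha = H - \tfrac12 < 0$, where the $\varepsilon=0$ kernel is singular on the diagonal.

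Substituting this identity into the definition of $B_t^{H,\varepsilon}$ and using $B_0=C$ gives
\[B_t^{H,\varepsilon} = C + \sqrt{2H}\,\varepsilon^\alpha \int_0^t dB_s + \sqrt{2H}\int_0^t \left( \int_s^t \alpha (u - s + \varepsilon)^{\alpha - 1}\, du \right) dB_s.\]
The core step is to apply the stochastic Fubini theorem to the last term, interchanging the $du$ and $dB_s$ integrations over the triangle $\{0\le s\le u\le t\}$:
\[\int_0^t \left( \int_s^t \alpha (u - s + \varepsilon)^{\alpha - 1}\, du \right) dB_s = \int_0^t \left( \int_0^u \alpha (u - s + \varepsilon)^{\alpha - 1}\, dB_s \right) du.\]
Since the kernel $(u-s+\varepsilon)^{\alpha-1}$ is scalar-valued and $B_t$ is a matrix of (one-dimensional) Brownian motions, this reduces entrywise to the classical scalar stochastic Fubini theorem, and the hypothesis to be checked is merely $\int_0^t \int_0^u |\alpha (u - s + \varepsilon)^{\alpha - 1}|^2\, ds\, du < +\infty$. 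This is immediate: on $0\le s\le u\le t$ the base lies in the compact interval $[\varepsilon, t+\varepsilon]\subset(0,\infty)$, so the kernel is bounded there, and the domain has finite measure.

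With the interchange justified, one obtains the decomposition
\[B_t^{H,\varepsilon} = C + \sqrt{2H}\,\varepsilon^\alpha (B_t - C) + \sqrt{2H}\int_0^t \left( \int_0^u \alpha (u - s + \varepsilon)^{\alpha - 1}\, dB_s \right) du,\]
in which the last term is an absolutely continuous (hence finite-variation), adapted, continuous process, and the middle term is a continuous (local) martingale. Therefore $B_t^{H,\varepsilon}$ is a continuous semimartingale, and differentiating termwise in $t$ yields precisely (\ref{eq4.1}). The only point genuinely requiring attention is the verification of the stochastic Fubini hypotheses, but, as noted, the strict positivity of $\varepsilon$ makes the kernel bounded on the relevant compact set, so this is routine; the matrix bookkeeping is equally trivial because the kernel is a scalar and everything splits entrywise. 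The analogous statement for $0<H<\tfrac12$ versus $\tfrac12<H<1$ requires no change in the argument, since $\varepsilon>0$ removes any distinction between the two ranges of $\alpha$.
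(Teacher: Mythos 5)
Your proof is correct: the paper itself omits the argument (calling it trivial and deferring to the cited works of Thao and Thao--Nguyen), and your decomposition of the kernel via $(t-s+\varepsilon)^\alpha=\varepsilon^\alpha+\int_s^t\alpha(u-s+\varepsilon)^{\alpha-1}\,du$ followed by the stochastic Fubini interchange is exactly the standard argument behind that citation, with the key observation, which you state correctly, that $\varepsilon>0$ keeps the kernel bounded so the Fubini hypothesis and the case $0<H<\tfrac12$ cause no difficulty. The entrywise reduction to the scalar case likewise matches how the paper treats matrix-valued statements, so nothing further is needed.
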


\begin{lemma}\label{lemma4.2}
$B_t^{H,\varepsilon }$ converges to $B_t^{H,0}$ in $L^2(\mathbf{\Omega})$ when $\varepsilon$ tends to $0$ and this convergence is uniform with respect to $t\in [0,T]$ for some $T\in \mathbb{R}$.
\end{lemma}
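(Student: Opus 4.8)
The plan is to write the difference $B_t^{H,\varepsilon}-B_t^{H,0}$ as a single Wiener integral and then apply the It\^o isometry together with the dominated convergence theorem. Since both processes are built from the same Brownian matrix $B_t$ and carry the same additive initial datum $C$, one has
\[
B_t^{H,\varepsilon}-B_t^{H,0}=\sqrt{2H}\int_0^t\bigl[(t-s+\varepsilon)^{\alpha}-(t-s)^{\alpha}\bigr]\,dB_s,
\]
an $\mathcal{M}_{n,p}(\mathbb{R})$-valued stochastic integral against a deterministic scalar kernel. Here one first notes that the integrals defining $B_t^{H,0}$ and $B_t^{H,\varepsilon}$ make sense: $\int_0^t(t-s)^{2\alpha}\,ds<\infty$ because $2\alpha=2H-1>-1$, and $\int_0^t(t-s+\varepsilon)^{2\alpha}\,ds<\infty$ trivially. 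Since the components $B_{ij}$ are independent standard Brownian motions, the same computation as in Remark \ref{remark2.1} (now with a scalar integrand and an $n\times p$ Brownian matrix, so the multiplicative constant is $np$) gives the exact isometry
\[
E\bigl|B_t^{H,\varepsilon}-B_t^{H,0}\bigr|^2 = 2H\,np\int_0^t\bigl[(t-s+\varepsilon)^{\alpha}-(t-s)^{\alpha}\bigr]^2\,ds.
\]

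After the change of variables $u=t-s$ the integrand $[(u+\varepsilon)^{\alpha}-u^{\alpha}]^2$ is nonnegative, so enlarging the domain from $[0,t]$ to $[0,T]$ yields
\[
\sup_{t\in[0,T]}E\bigl|B_t^{H,\varepsilon}-B_t^{H,0}\bigr|^2\le 2H\,np\int_0^T\bigl[(u+\varepsilon)^{\alpha}-u^{\alpha}\bigr]^2\,du=:\phi(\varepsilon),
\]
and it remains only to prove $\phi(\varepsilon)\to0$ as $\varepsilon\downarrow0$. For a.e.\ $u\in(0,T]$ one has $(u+\varepsilon)^{\alpha}-u^{\alpha}\to0$, so it suffices to exhibit an $\varepsilon$-independent integrable majorant (take $0<\varepsilon\le1$) and invoke dominated convergence. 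If $H\ge\frac12$, so $0\le\alpha<\frac12$, subadditivity of $x\mapsto x^{\alpha}$ gives $0\le(u+\varepsilon)^{\alpha}-u^{\alpha}\le\varepsilon^{\alpha}\le1$; the constant $1$ works, and in fact one gets the bound $\phi(\varepsilon)\le2Hnp\,\varepsilon^{2\alpha}T\to0$ directly. If $0<H<\frac12$, so $-\frac12<\alpha<0$, then $0<(u+\varepsilon)^{\alpha}<u^{\alpha}$, hence $[(u+\varepsilon)^{\alpha}-u^{\alpha}]^2\le u^{2\alpha}$, and $u\mapsto u^{2\alpha}$ is integrable on $[0,T]$ precisely because $2\alpha>-1$; dominated convergence again gives $\phi(\varepsilon)\to0$. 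In both regimes the convergence of $B_t^{H,\varepsilon}$ to $B_t^{H,0}$ in $L^2(\mathbf{\Omega})$ is thus uniform in $t\in[0,T]$.

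There is essentially no serious obstacle here, which is why the authors call the proof trivial; the only point deserving a little care is the range $0<H<\frac12$, where the kernel $u^{\alpha}$ is singular at $u=0$ and one cannot bound the integrand uniformly, so one must use the integrability of $u^{2\alpha}$ (equivalently $2H-1>-1$) to justify passing to the limit. If one prefers to avoid dominated convergence altogether in that case, the same conclusion follows from the splitting $\int_0^T=\int_0^{\delta}+\int_{\delta}^T$: the first piece is at most $\int_0^{\delta}u^{2\alpha}\,du=\delta^{2\alpha+1}/(2\alpha+1)$, which is small for small $\delta$, while on $[\delta,T]$ the function $u\mapsto u^{\alpha}$ is uniformly continuous, so $\sup_{u\in[\delta,T]}|(u+\varepsilon)^{\alpha}-u^{\alpha}|\to0$ and the second piece vanishes in the limit.
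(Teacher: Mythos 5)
Your proposal is correct, and it essentially supplies in full the argument the paper omits: the paper gives no proof of Lemma \ref{lemma4.2}, simply citing the scalar-case results of Thao (for $\tfrac12<H<1$) and Thao--Nguyen (for $0<H<\tfrac12$) and declaring the matrix form trivial, and the componentwise It\^{o}-isometry computation you perform (giving the factor $np$ and the deterministic integral $\int_0^T[(u+\varepsilon)^{\alpha}-u^{\alpha}]^2du$) is exactly the standard calculation behind those references, with your case split $\alpha\ge0$ versus $-\tfrac12<\alpha<0$ handling the singular kernel correctly via the integrable majorant $u^{2\alpha}$. One negligible quibble: at $\alpha=0$ (i.e.\ $H=\tfrac12$) your bound $\phi(\varepsilon)\le 2Hnp\,\varepsilon^{2\alpha}T$ does not itself tend to $0$, but there the two processes coincide identically, so nothing is lost.
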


Note that (\ref{eq4.1+}) is defined in the sense of L\'{e}vy (\cite{Levy}), which is based on the Holmgren-Riemann-Liouville fractional integral while there is a definition in the sense of Mandelbrot-van Ness (\cite{Mandelbrot}) based on Weyl's integral for a two-side fractional Brownian motion:
\begin{equation}\label{eq4.1++}
B_t^H = C + {V_H}\left\{ {\int_{ - \infty }^0 {[{{(t - s)}^\alpha } - {{( - s)}^\alpha }]d{B_s}}  + \int_0^t {{{(t - s)}^\alpha }d{B_s}} } \right\}
\end{equation}
for $t>0$ (and similarly for $t<0$), where $V_H$ is a normalizing constant. Compared with the fractional Brownian motion $B_t^H$ defined by (\ref{eq4.1++}), the process  $B_t^{H,0}$ defined by (\ref{eq4.1+}) ``puts too great an importance on the origin for many applications" (\cite{Mandelbrot}). Furthermore, when $H\neq\frac{1}{2}$, it is easy to check that each component of $B_t^{H,0}$ does not satisfy the covariance condition in Definition \ref{d2.3} (or that the increment of each component of $B_t^{H,0}$ is not a stationary process which is contrary to the case of $B_t^H$, see \cite{Picard}) and so $B_t^{H,0}$ is not a fractional Brownian matrix.

 However, in the case $t>0$,  $\int_0^t {{{(t - s)}^\alpha }d{B_s}}$ (the main part of  $B_t^{H,0}$) plays an essential role in exhibiting the property of memory in $B_t^H$.   For any fixed $t>0$, we have $B_t^{H,0} \mathop  = \limits^d  B_t^H$, as we can compute that $B_t^{H,0} \sim {N_{n,p}}(C,{I_n} \otimes ({t^{2H}}{I_p}))$ by employing the same method used in Lemma \ref{lemma3.1}. Moreover, $B_t^{H,0}$ can be viewed as a good approximation of the fractional Brownian motion for large times (see Theorem 17 in \cite{Picard}). Hence, as a more simple process, there are large numbers of researchers adopt $B_t^{H,0}$ rather than the fractional Brownian motion defined by (\ref{eq4.1++}) to study SDEs related to the property of memory (see, for example, \cite{Alos,Jumarie,Thao02}). So we shall use $B_t^{H,\varepsilon}$ as the approximation of the fractional Brownian matrix $B_t^H$.

For any $\varepsilon>0$,  similar to Definition \ref{def3.1}, we can define the following  $\varepsilon$-fractional Wishart processes with integer indices.

\begin{definition}
For any $\varepsilon>0$, an $\varepsilon$-fractional Wishart process, of Hurst index $H$, index $n$, dimension $p(\leq n)$ and initial state $\Sigma_0^\varepsilon$, written as $\varepsilon$-$fWIS(H,n,p,\Sigma_0)$, is the matrix process
\begin{equation}\label{eq4.2}
{\Sigma _t^\varepsilon} = ({\Sigma _{ij}^\varepsilon}(t)) = {(B_t^{H,\varepsilon})'}B_t^{H,\varepsilon},\ \ {\Sigma _0^\varepsilon} = {C'}C >0, \; a.s.
\end{equation}
\end{definition}

Analogous to the fractional Wishart process, it is easy to get the following result.

\begin{theorem}
For any $\varepsilon>0$,
\[\Sigma _t^\varepsilon  > 0,\; a.s.\]
\[\Sigma _t^\varepsilon  \sim {W_p}\left(n,({(t + \varepsilon )^{2H}} - {\varepsilon ^{2H}}){I_p},{({(t + \varepsilon )^{2H}} - {\varepsilon ^{2H}})^{ - 1}}\Sigma _0^\varepsilon \right).\]
Consequently, its Laplace transform is given by
\begin{equation}\label{eq4.3}
E[\mathrm{etr}( - Z\Sigma _t^\varepsilon )] = \det {\left({I_p} + 2({(t + \varepsilon )^{2H}} - {\varepsilon ^{2H}})Z\right)^{ - \frac{n}{2}}}\mathrm{etr}\left( - Z{({I_p} + 2({(t + \varepsilon )^{2H}} - {\varepsilon ^{2H}})Z)^{ - 1}}\Sigma _0^\varepsilon \right)
\end{equation}
for any $Z \in \mathcal{S}^+_{p}( \mathbb{R})$.

\end{theorem}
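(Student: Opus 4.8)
The plan is to mirror the proof of Lemma~\ref{lemma3.1}: first pin down the law of the matrix $B_t^{H,\varepsilon}$ for a fixed $t>0$, and then read off the distribution of $\Sigma_t^\varepsilon=(B_t^{H,\varepsilon})'B_t^{H,\varepsilon}$ from the theory of matrix variate normal and noncentral Wishart distributions, exactly as was done there.

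First I would observe that, since each component $B_{ij}(t)$ of the Brownian matrix $B_t$ is an independent standard one-dimensional Brownian motion, the entry
\[
B_{ij}^{H,\varepsilon}(t)=\sqrt{2H}\int_0^t (t-s+\varepsilon)^{\alpha}dB_{ij}(s)+C_{ij}
\]
is, for fixed $t$, a Gaussian random variable with mean $C_{ij}$ (the integrand is a bounded deterministic function on $[0,t]$ since $t-s+\varepsilon\in[\varepsilon,t+\varepsilon]$). By the It\^o isometry together with the substitution $u=t-s+\varepsilon$ and $2\alpha=2H-1$,
\[
\mathrm{Var}\,B_{ij}^{H,\varepsilon}(t)=2H\int_0^t (t-s+\varepsilon)^{2\alpha}ds=2H\int_\varepsilon^{t+\varepsilon}u^{2H-1}du=(t+\varepsilon)^{2H}-\varepsilon^{2H}=:\sigma^2(t)>0.
\]
Since the driving Brownian motions are mutually independent, the It\^o isometry also shows that distinct entries of $B_t^{H,\varepsilon}$ are uncorrelated; being jointly Gaussian (every entry is a linear functional of the Gaussian process $B$), they are independent. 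Assembling these facts precisely as in the computation of $\mathrm{cov}(\mathrm{vec}[(B_t^H)'])$ in Lemma~\ref{lemma3.1}, I would obtain
\[
\mathrm{vec}[(B_t^{H,\varepsilon})']\sim N_{np}\bigl(\mathrm{vec}(C'),\,I_n\otimes(\sigma^2(t)I_p)\bigr),
\]
and hence $B_t^{H,\varepsilon}\sim N_{n,p}(C,\,I_n\otimes(\sigma^2(t)I_p))$ by the definition of the matrix variate normal distribution.

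Then, using $p\leq n$, I would invoke Theorems 3.2.1 and 3.5.1 of \cite{Gupta}, exactly as in Lemma~\ref{lemma3.1}, to conclude $\Sigma_t^\varepsilon>0$ a.s.\ and $\Sigma_t^\varepsilon\sim W_p(n,\sigma^2(t)I_p,\sigma^{-2}(t)\Sigma_0^\varepsilon)$, where $\Sigma_0^\varepsilon=C'C$. Finally, the Laplace transform (\ref{eq4.3}) is obtained by substituting $\Sigma=\sigma^2(t)I_p$ and $\Theta=\sigma^{-2}(t)\Sigma_0^\varepsilon$ into the noncentral Wishart Laplace transform derived in the proof of Theorem 3.5.1 of \cite{Gupta}; equivalently, it is formula (\ref{eq3.3}) with $t^{2H}$ replaced throughout by $\sigma^2(t)=(t+\varepsilon)^{2H}-\varepsilon^{2H}$.

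I do not anticipate a genuine obstacle, as the whole argument runs parallel to Lemma~\ref{lemma3.1}; the one point that deserves care is the elementary variance computation, in particular checking that the change of variable legitimately produces $(t+\varepsilon)^{2H}-\varepsilon^{2H}$ rather than $t^{2H}$. This reflects the fact, already noted in Section~\ref{section4}, that the increments of $B_t^{H,\varepsilon}$ are not stationary, so the variance parameter $\sigma^2(t)$ genuinely differs from $t^{2H}$, which is why the Wishart parameters here differ from those in Lemma~\ref{lemma3.1}.
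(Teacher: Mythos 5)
Your proposal is correct and follows essentially the same route as the paper, which simply notes that, as in Lemma~\ref{lemma3.1}, $B_t^{H,\varepsilon}\sim N_{n,p}(C, I_n\otimes(((t+\varepsilon)^{2H}-\varepsilon^{2H})I_p))$ and then reads off the Wishart law and Laplace transform from the results of \cite{Gupta}. Your explicit It\^{o}-isometry computation of the variance $(t+\varepsilon)^{2H}-\varepsilon^{2H}$ is exactly the detail the paper leaves implicit.
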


\begin{proof}
Similar to the proofs of Lemma \ref{lemma3.1}, one has
\[B_t^{H,\varepsilon } \sim {N_{n,p}}(C,{I_n} \otimes (({(t + \varepsilon )^{2H}} - {\varepsilon ^{2H}}){I_p})).\]
Thus, we can easily get the results.  \hfill $\blacksquare$
\end{proof}

\begin{remark}\label{remark4.1}
From (\ref{eq3.3}) and (\ref{eq4.3}), one can get that $E[\mathrm{etr}( - Z\Sigma _t^\varepsilon )]$ converges to $E[\mathrm{etr}( - Z{\Sigma _t})]$ when $\varepsilon$ tends to $0$ and so $\Sigma _t^\varepsilon$ converges to $\Sigma _t$ in distribution when $\varepsilon$ tends to $0$. As a result, we can regard an $\varepsilon$-fractional Wishart process as an approximation of the fractional Wishart process in distribution.
\end{remark}

Similar to the case that the Wishart process is governed by a matrix SDE (\cite{Bru}), we shall see that an $\varepsilon$-fractional Wishart process can be governed by a matrix SPDE.

\begin{theorem}\label{th4.2}
Let $u_t(x)=\Sigma _t^x$ and ${\partial _x}{u_t}(x)=(\frac{\partial }{{\partial x}}{u_{ij,t}}(x))$.  Assume that $W_t$ is a matrix Brownian motion in $\mathcal{M}_{p,p}( \mathbb{R})$. Then, for any $0<\varepsilon<1$, $u_t(x)$ satisfies the following SPDE:
\begin{equation}\label{eq4.4}
d{u_t}(x) = \left({\partial _x}{u_t}(x) + 2nH{x^{2\alpha}}{I_p}\right)dt + \sqrt {2H}{x^\alpha}\left(\sqrt {{u_t}(x)} d{W_t} + d{W_t}'\sqrt {{u_t}(x)} \right)
\end{equation}
with the initial condition ${u_0}(x) = C'C>0$ a.s. for $C\in \mathcal{M}_{n,p}( \mathbb{R})$ and $x \in [\varepsilon,1]$ and with the boundary conditions ${u_t}(\varepsilon ) = \Sigma _t^\varepsilon$ and ${u_t}(1) = \Sigma _t^1$ a.s.
\end{theorem}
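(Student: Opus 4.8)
The plan is to differentiate the driving process $B_t^{H,x}=\sqrt{2H}\int_0^t(t-s+x)^{\alpha}dB_s+C$ both in $t$ and in $x$, and then to apply the matrix It\^{o} product rule to $\Sigma_t^x=(B_t^{H,x})'B_t^{H,x}$. The key structural point, which is what makes the SPDE close up, is that the \emph{drift} of $dB_t^{H,x}$ equals $\partial_x B_t^{H,x}$. Indeed, Lemma \ref{lemma4.1} gives
\[
dB_t^{H,x}=\Bigl(\sqrt{2H}\int_0^t\alpha(t-s+x)^{\alpha-1}dB_s\Bigr)dt+\sqrt{2H}\,x^{\alpha}\,dB_t,
\]
and differentiating the Wiener integral in $x$ for fixed $t$ is legitimate by Kolmogorov's theorem (Lemma \ref{lemma2.2}) applied to the field $(s,x)\mapsto\sqrt{2H}(t-s+x)^{\alpha}$, whose $x$-derivatives are bounded on $s\in[0,t]$, $x\in[\varepsilon,1]$ since there $t-s+x\ge\varepsilon>0$; this yields $\partial_x B_t^{H,x}=\sqrt{2H}\int_0^t\alpha(t-s+x)^{\alpha-1}dB_s$, hence $dB_t^{H,x}=\partial_x B_t^{H,x}\,dt+\sqrt{2H}\,x^{\alpha}\,dB_t$.

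Next I would compute, by the matrix It\^{o} product rule (which follows componentwise from the scalar rule),
\[
d\Sigma_t^x=(dB_t^{H,x})'B_t^{H,x}+(B_t^{H,x})'dB_t^{H,x}+d\langle(B^{H,x})',B^{H,x}\rangle_t .
\]
The two $dt$-contributions coming from the drift of $dB_t^{H,x}$ add up to $(\partial_xB_t^{H,x})'B_t^{H,x}+(B_t^{H,x})'\partial_xB_t^{H,x}=\partial_x\bigl((B_t^{H,x})'B_t^{H,x}\bigr)=\partial_xu_t(x)$. Only the martingale part $\sqrt{2H}\,x^{\alpha}dB_t$ of $B_t^{H,x}$ contributes to the covariation, so $d\langle(B^{H,x})',B^{H,x}\rangle_t=2Hx^{2\alpha}\,d\langle B',B\rangle_t=2nHx^{2\alpha}I_p\,dt$, where $\langle B',B\rangle_t=ntI_p$ because the entries of the $n\times p$ Brownian matrix $B$ are independent standard Brownian motions. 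Collecting terms, the drift of $d\Sigma_t^x$ is $\partial_xu_t(x)+2nHx^{2\alpha}I_p$ and its martingale part is $\sqrt{2H}\,x^{\alpha}\bigl((B_t^{H,x})'dB_t+dB_t'B_t^{H,x}\bigr)$.

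The remaining and hardest step is to put this martingale part into the canonical Wishart form, i.e. to produce a $p\times p$ Brownian matrix $W_t$ with $(B_t^{H,x})'dB_t+dB_t'B_t^{H,x}=\sqrt{u_t(x)}\,dW_t+dW_t'\sqrt{u_t(x)}$. I would set $W_t=\int_0^t(\Sigma_s^x)^{-1/2}(B_s^{H,x})'dB_s$; this is well defined because $\Sigma_s^x>0$ a.s. (by the positive-definiteness of $\Sigma^x$ established above, together with $\Sigma_0^x=C'C>0$ and continuity near $s=0$) and the integrand is locally bounded. Writing $g_s=(\Sigma_s^x)^{-1/2}(B_s^{H,x})'$, a computation as in Remark \ref{remark2.1} gives $\langle W_{ab},W_{cd}\rangle_t=\delta_{bd}\int_0^t(g_sg_s')_{ac}\,ds=\delta_{ac}\delta_{bd}\,t$, since $g_sg_s'=(\Sigma_s^x)^{-1/2}\Sigma_s^x(\Sigma_s^x)^{-1/2}=I_p$; hence by L\'{e}vy's characterization $W_t$ is a $p\times p$ Brownian matrix. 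Since $(\Sigma_t^x)^{-1/2}$ and $\sqrt{u_t(x)}=\sqrt{\Sigma_t^x}$ are symmetric with $\sqrt{\Sigma_t^x}\,(\Sigma_t^x)^{-1/2}=I_p$, one gets $\sqrt{u_t(x)}\,dW_t=(B_t^{H,x})'dB_t$ and $dW_t'\sqrt{u_t(x)}=dB_t'B_t^{H,x}$, which is exactly the martingale term of (\ref{eq4.4}). The initial condition $u_0(x)=(B_0^{H,x})'B_0^{H,x}=C'C$ and the boundary conditions $u_t(\varepsilon)=\Sigma_t^{\varepsilon}$, $u_t(1)=\Sigma_t^1$ are immediate from the definitions. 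The delicate points to watch are the differentiation under the It\^{o} integral (Lemma \ref{lemma2.2}), the invertibility of $\Sigma_t^x$ needed to define $W_t$, and the fact that the Brownian matrix $W_t$ so built depends on $x$ through $B^{H,x}$; accordingly (\ref{eq4.4}) should be read as asserting, for each fixed $x\in[\varepsilon,1]$, the existence of such a $W_t$ obtained from the single driving matrix $B$ by the explicit transformation above.
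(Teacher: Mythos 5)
Your proposal is correct and follows essentially the same route as the paper's proof: differentiating $B_t^{H,x}$ in $x$ via Lemma \ref{lemma2.2}, using Lemma \ref{lemma4.1} to identify the drift of $dB_t^{H,x}$ with $\partial_x B_t^{H,x}$ so that the $dt$-terms collapse to $\partial_x u_t(x)$, and constructing $W_t$ through $dW_t=(\Sigma_t^x)^{-1/2}(B_t^{H,x})'dB_t$ together with L\'{e}vy's characterization. The only cosmetic difference is that you compute the bracket term $2nHx^{2\alpha}I_p\,dt$ directly from the matrix It\^{o} product rule, whereas the paper obtains it by citing the matrix integration-by-parts formula of Barndorff-Nielsen and Stelzer.
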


\begin{proof}
By matrix variate partial integration (see Lemma 5.11 in \cite{Barndorff-Nielsen}), the relation (\ref{eq4.2}) gives that, for any $x\in [\varepsilon,1]$,
\begin{equation}\label{eq4.5}
d\Sigma _t^x  = 2nH{x ^{2\alpha}}{I_p}dt + d(B_t^{H,x })'B_t^{H,x } + (B_t^{H,x })'dB_t^{H,x }.
\end{equation}
It follows from the definition of $B_t^{H,x }$ and Lemma \ref{lemma2.2} that, for any $x\in [\varepsilon,1]$,
\begin{eqnarray}\label{eq4.5+}
{\partial _x}B_t^{H,x} = \sqrt {2H}\int_0^t {{\partial _x}[{(t - s + x)}^\alpha ]} d{B_s} = \sqrt {2H}\int_0^t {\alpha {{(t - s + x )}^{\alpha  - 1}}} d{B_s}.
\end{eqnarray}
Now Lemma \ref{lemma4.1} shows that
\begin{equation}\label{eq4.6}
dB_t^{H,x} = {\partial _x}B_t^{H,x}dt + \sqrt {2H}{x^\alpha }d{B_t}.
\end{equation}
Taking (\ref{eq4.5}) and (\ref{eq4.6}) into account, we achieve
\begin{equation}\label{eq4.7}
d\Sigma _t^x  = (2nH{x ^{2\alpha}}{I_p} + {\partial _x}(B_t^{H,x})'B_t^{H,x } + (B_t^{H,x })'{\partial _x}B_t^{H,x})dt + \sqrt {2H}{x^\alpha }(d{B_t}'B_t^{H,x } + (B_t^{H,x })'d{B_t}).
\end{equation}
It is easy to check that
\begin{equation}\label{eq4.8}
{\partial _x}(B_t^{H,x})'B_t^{H,x} + (B_t^{H,x})'{\partial _x}B_t^{H,x} = {\partial _x}((B_t^{H,x})'B_t^{H,x}) = {\partial _x}\Sigma _t^x.
\end{equation}

Let $W_t$ be a matrix process satisfying
\begin{equation}\label{eq4.9}
d{W_t} = {(\Sigma _t^x)^{ - \frac{1}{2}}}(B_t^{H,x})'d{B_t}.
\end{equation}
By the L\'{e}vy martingale characterization of Brownian motion (\cite{Karatzas}), it is not hard to prove (one can refer to \cite{Pfaffel}) that $W_t$ is a matrix Brownian motion in $\mathcal{M}_{p,p}( \mathbb{R})$.

Combining (\ref{eq4.8})-(\ref{eq4.9}) with (\ref{eq4.7}), we know that (\ref{eq4.4}) is true. Obviously, the initial and boundary conditions are satisfied. This completes the proof. \hfill $\blacksquare$

\end{proof}

\begin{remark}\label{remark4+}
We would like to point out that Theorem \ref{th4.2} also holds for any $x\in [\varepsilon,a]$ with $\varepsilon< a <+\infty$.  In fact, for any $x\in [\varepsilon,a]$ with $\varepsilon< a <+\infty$, by Lemma \ref{lemma2.2}, we know that (\ref{eq4.5+}) is still true.
\end{remark}

In order to define the Wishart process with a non-integer index, Bru (\cite{Bru}) extended the SDE related to the Wishart process with an integer index to the case when the
index $n$ is non-integer. Following the idea, we propose to study SPDE (\ref{eq4.4}) to extend an $\varepsilon$-fractional Wishart process with an integer index $\varepsilon$-$fWIS(H,n,p,\Sigma_0)$ to the case when $n$ is not an integer.

Assume that $f(x)$ and $g(x)$ are two bounded functions in $C^{m,1}(\mathbb{R})$ with $m\geq 4$ (where $C^{m,\delta}(\mathbb{R})$ is the set of all $C^m$ functions whose partial derivatives $\partial ^{\alpha}$ with $\alpha\leq m$ are H\"{o}lder-continuous with exponent $\delta$).  We first extend (\ref{eq4.4}) to the following Cauchy problem:
\begin{equation}\label{eq4.10}
d{u_t}(x) = ({\partial _x}{u_t}(x) + vf(x){I_p})dt + g(x)\left(\sqrt {{u_t}(x)} d{W_t} + d{W_t}'\sqrt {{u_t}(x)} \right)
\end{equation}
with the initial condition $u_0(x)=\Sigma_0\in \mathcal{S}_{p}^+( \mathbb{R})$ a.s. such that (\ref{eq4.10}) coincides with (\ref{eq4.4}) when $v=n$ and $x\in [\varepsilon, 1]$.

For this aim, we can choose $f(x)$ and $g(x)$ as follows. Let $f(x)=g^2(x)$ and
\[g(x) = \left\{
\begin{array}{ll}
0, & x \le 0;\\
\sum\limits_{i = 1}^5 a_i x^{i + 4}, & 0 < x < \varepsilon; \\
\sqrt{2H}{x^\alpha}, & \varepsilon  \le x \le 1; \\
\sum\limits_{i = 1}^5 b_i(x - 2)^{i + 4}, & 1 < x < 2; \\
0, &2 \le x,
\end{array}
\right.\]
where $a_i$ and $b_i$ ($i=1,\cdots,5$) are determined by following equations
\[\left\{ {\begin{array}{l}
{\sum\limits_{i = 1}^5 {{a_i}{\varepsilon ^{i + 4}} = } \sqrt {2H}{\varepsilon ^\alpha }};\\
{\sum\limits_{i = 1}^5 {{a_i}(i + 4){\varepsilon ^{i + 3}} = } \alpha \sqrt {2H}{\varepsilon ^{\alpha  - 1}}};\\
{\sum\limits_{i = 1}^5 {{a_i}(i + 4)(i + 3){\varepsilon ^{i + 2}} = } \alpha (\alpha  - 1)\sqrt {2H}{\varepsilon ^{\alpha  - 2}}};\\
{\sum\limits_{i = 1}^5 {{a_i}(i + 4)(i + 3)(i + 2){\varepsilon ^{i+1}} = } \alpha (\alpha  - 1)(\alpha  - 2)\sqrt {2H}{\varepsilon ^{\alpha  - 3}}};\\
{\sum\limits_{i = 1}^5 {{a_i}(i + 4)(i + 3)(i + 2)(i + 1){\varepsilon ^{i}} = } \alpha (\alpha  - 1)(\alpha  - 2)(\alpha  - 3)\sqrt {2H}{\varepsilon ^{\alpha  - 4}}}
\end{array}} \right.\]
and
\[\left\{ {\begin{array}{*{20}{l}}
{\sum\limits_{i = 1}^5 {{{( - 1)}^{i + 4}}{b_i} = } \sqrt {2H}};\\
{\sum\limits_{i = 1}^5 {{{( - 1)}^{i + 3}}(i + 4){b_i} = } \alpha \sqrt {2H}};\\
{\sum\limits_{i = 1}^5 {{{( - 1)}^{i + 2}}(i + 4)(i + 3){b_i} = } \alpha (\alpha  - 1)\sqrt {2H}};\\
{\sum\limits_{i = 1}^5 {{{( - 1)}^{i + 1}}(i + 4)(i + 3)(i + 2){b_i} = } \alpha (\alpha  - 1)(\alpha  - 2)\sqrt {2H}};\\
{\sum\limits_{i = 1}^5 {{{( - 1)}^{i    }}(i + 4)(i + 3)(i + 2)(i + 1){b_i} = } \alpha (\alpha  - 1)(\alpha  - 2)(\alpha  - 3)\sqrt {2H}}.
\end{array}} \right.\]
By the knowledge of the system of linear equations (see, for example, \cite{Singh}), we know that two systems of linear equations mentioned above have solutions.  Thus, we can find $f(x)$ and $g(x)$ such that (\ref{eq4.10}) coincides with (\ref{eq4.4}) when $v=n$ and $x\in [\varepsilon, 1]$. It is seen that $\partial_x^kf(x)$ and $\partial_x^kg(x)$ ($0\leq k\leq 4$) are continuous and bounded on $\mathbb{R}$.

The stochastic characteristic equations associated with (\ref{eq4.10}) are defined by
\begin{eqnarray}\label{eq4.12}
\left\{\begin{array}{l}
d{\xi _t} =  - dt,\quad {\xi _0} = x \in \mathbb{R}, \; a.s.\\
d{\eta _t} = vf({\xi _t})I_pdt + g({\xi _t})(\sqrt {{\eta _t}} d{W_t} + d{W_t}'\sqrt {{\eta _t}} ),\quad {\eta _0} = {\Sigma _0} \in \mathcal{S}_{p}^+( \mathbb{R}),\; a.s.
\end{array}\right.
\end{eqnarray}
Noticing that both $\xi _t$ and $\eta _t$ depend on $x$, we use $\xi _t(x)$ and $\eta _t(x)$ to denote the relationships if it is necessary.

For the solution of (\ref{eq4.12}), we have the following result.

\begin{lemma}\label{lemma4.3}
If $v\geq n+1$, then (\ref{eq4.12}) has a unique adapted continuous strong solution $(\xi _t,\eta _t)_{t\in \mathbb{R}_+}$ on $\mathbb{R}\times\mathcal{S}_{p}^+(\mathbb{R})$ for each initial condition $(x,\Sigma _0)\in \mathbb{R}\times\mathcal{S}_{p}^+(\mathbb{R})$. In particular, the stopping time $T(x,\Sigma _0)=\inf\{t\geq 0:\eta _t \notin \mathcal{S}_{p}^+(\mathbb{R})\}=+\infty,a.s.$.
\end{lemma}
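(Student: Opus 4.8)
The plan is to observe that the first equation in (\ref{eq4.12}) is solved explicitly by $\xi_t=x-t$ for every $\omega$, so $\xi_t(x)$ is deterministic; substituting it into the second equation turns (\ref{eq4.12}) into a single matrix SDE for $\eta_t$ with \emph{deterministic}, time-dependent coefficients, of exactly the form (\ref{lem2.4_eq2}) treated in Lemma \ref{lem2.4}, with $W_t$ playing the role of the Brownian matrix and
\[F(t,X)=g(x-t)\sqrt{X},\qquad G(t,X)=I_p,\qquad J(t,X)=vf(x-t)I_p.\]
Since $F(t,X)$ is symmetric and $G(t,X)=I_p$, the terms $F\,dW_t\,G+G'\,dW_t'\,F'$ reproduce the symmetrised diffusion $g(\xi_t)(\sqrt{\eta_t}\,dW_t+dW_t'\sqrt{\eta_t})$ and $J\,dt$ reproduces the drift, so the two formulations coincide. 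It then remains to verify the three hypotheses of Lemma \ref{lem2.4} and invoke it.

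For the regularity hypotheses: because $G(t,X)=I_p$ one has $G'\otimes F=I_p\otimes F$, whose Frobenius norm and local Lipschitz modulus differ from those of $F$ only by the factor $\sqrt p$, so it suffices to treat $F(t,X)=g(x-t)\sqrt X$. Here $g$, being in $C^{m,1}(\mathbb{R})$ with $m\ge 4$ and supported in $[0,2]$, is bounded on $\mathbb{R}$, and $X\mapsto\sqrt X$ is smooth, hence locally Lipschitz, on the open cone $\mathcal{S}_p^+(\mathbb{R})$; thus $F$ is locally Lipschitz. Linear growth follows from $|F(t,X)|^2=g(x-t)^2\,\mathrm{Tr}(X)\le\|g\|_\infty^2\sqrt p\,|X|\le C(1+|X|^2)$, using $\mathrm{Tr}(X)\le\sqrt p\,|X|$ for $X\in\mathcal{S}_p^+(\mathbb{R})$. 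The map $J(t,X)=vf(x-t)I_p$ is $\mathcal{S}_p(\mathbb{R})$-valued, independent of $X$ (so trivially locally Lipschitz) and bounded (so of linear growth).

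The crux is the drift inequality (\ref{lem2.4_eq1}). In the notation of Lemma \ref{lem2.4} one has $f(t,X)=FF'=f(x-t)X$ and $g(t,X)=G'G=I_p$, so a short trace computation gives
\begin{align*}
&\mathrm{Tr}[J(t,X)X^{-1}]-\mathrm{Tr}[f(t,X)X^{-1}]\,\mathrm{Tr}[g(t,X)X^{-1}]-\mathrm{Tr}[f(t,X)X^{-1}g(t,X)X^{-1}]\\
&\qquad=\bigl(v-p-1\bigr)f(x-t)\,\mathrm{Tr}[X^{-1}].
\end{align*}
Since $f=g^2\ge 0$, $\mathrm{Tr}[X^{-1}]>0$ on $\mathcal{S}_p^+(\mathbb{R})$, and $p\le n$ forces $v\ge n+1\ge p+1$, the right-hand side is nonnegative for all admissible $X$; hence (\ref{lem2.4_eq1}) holds with the locally integrable choice $c(t)\equiv 0$. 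Lemma \ref{lem2.4} then delivers a unique adapted continuous strong solution $(\eta_t)_{t\in\mathbb{R}_+}$ on $\mathcal{S}_p^+(\mathbb{R})$ with $\inf\{t\ge 0:\eta_t\notin\mathcal{S}_p^+(\mathbb{R})\}=+\infty$ a.s., and pairing it with $\xi_t=x-t$ gives the assertion for each initial condition $(x,\Sigma_0)\in\mathbb{R}\times\mathcal{S}_p^+(\mathbb{R})$.

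I expect the only genuinely delicate point to be the local Lipschitz property of $F(t,X)=g(x-t)\sqrt X$: the matrix square root is merely H\"older-$1/2$ up to the boundary of $\overline{\mathcal{S}_p^+}(\mathbb{R})$, and one must exploit that it is truly Lipschitz on every compact subset of the \emph{open} cone $\mathcal{S}_p^+(\mathbb{R})$ — precisely the content of ``locally Lipschitz'' as defined above. Everything else is bookkeeping, and the hypothesis $v\ge n+1$ enters only through the sign of $v-p-1$, which is exactly what permits the simplest admissible choice $c\equiv 0$.
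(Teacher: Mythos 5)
Your proof is correct and follows essentially the same route as the paper: solve the first characteristic equation explicitly by $\xi_t=x-t$, cast the remaining matrix SDE in the form of Lemma \ref{lem2.4}, verify that $G'\otimes F$ is locally Lipschitz and of linear growth (using boundedness of $g$ and local Lipschitzness of $\sqrt{X}$ on the open cone) and that $J$ is too, and check the drift condition via the same trace computation $(v-p-1)f\,\mathrm{Tr}[X^{-1}]\geq 0$ with $c\equiv 0$. The only differences are cosmetic: the paper attaches $g(x-t)$ to $G$ (taking $F=\sqrt{X}$, $G=g(x-t)I_p$) rather than to $F$, and obtains the linear-growth bound through the spectral norm and norm equivalence instead of your direct identity $|\sqrt{X}|^2=\mathrm{Tr}(X)\leq\sqrt{p}\,|X|$.
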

\begin{proof}
It is telling that the first equation of (\ref{eq4.12}) has a unique adapted continuous strong solution:
$$
\xi_t(x)=x-t.
$$
So we focus on the second equation of (\ref{eq4.12}).

From the definitions of $f(x)$ and $g(x)$, there exists a real number $M>0$ such that
\begin{equation}\label{eq4.15}
\mathop {\sup }\limits_{x \in \mathbb{R}} \{ |f(x)|,|g(x)|\}  \le M
\end{equation}

Let $J(t,X)=vf(x-t)I_p$, $G(t,X)=g(x-t)I_p$, $F(t,X)=\sqrt{X}$, $ff(t,X)=F(t,X)F(t,X)'$ and $gg(t,X)=G(t,X)' G(t,X)$  for any $X\in\mathcal{S}_{p}^+(\mathbb{R})$. Then we have $ff(t,X)=X$ and $gg(t,X)=f(x-t)I_p$ for any $X\in\mathcal{S}_{p}^+(\mathbb{R})$.

For any $A\in \mathcal{M}_{m,n}(\mathbb{R})$ and $B\in \mathcal{M}_{p,q}(\mathbb{R})$,
\[|A\otimes B|^2 = \sum\limits_{i = 1}^m {\sum\limits_{j = 1}^n {\sum\limits_{u = 1}^p {\sum\limits_{v = 1}^q {|{A_{ij}}{B_{uv}}|^2} } } }  = \sum\limits_{i = 1}^m {\sum\limits_{j = 1}^n {|{A_{ij}}|^2} } \sum\limits_{u = 1}^m {\sum\limits_{v = 1}^n {|{B_{uv}}|^2} }  = |A|^2|B|^2\]
and it follows that, for any compact set $U$ in $\mathcal{S}_{p}^+(\mathbb{R})$ and $X,Y\in U$,
\[|G'\otimes F(t,X)-G'\otimes F(t,Y)|=|g(x-t)I_p\otimes (\sqrt{X}-\sqrt{Y})|\leq |g(x-t)||I_p||\sqrt{X}-\sqrt{Y}|.\]
Since $\sqrt{X}$ is locally Lipschitz in $\mathcal{S}_{n}^+(\mathbb{R})$ (see, for example, \cite{Barndorff-Nielsen}), there exists a constant $C(U)$ such that
\[|\sqrt{X}-\sqrt{Y}|\leq C(U)|X-Y|.\]
Then by (\ref{eq4.15}), we obtain
\begin{equation}\label{th1_eq1}
|G'\otimes F(t,X)-G'\otimes F(t,Y)|\leq M|I_p|C(U)|X-Y|
\end{equation}
which implies $G'\otimes F(t,X)$ is locally Lipschitz.

Now consider the norm $||\cdot||_2$ for $A\in\mathcal{M}_{m,n}(\mathbb{R})$ (see P44 in \cite{Dennis}):
\[||A||_2=(\mbox{maximum eigenvalue of } A'A)^{\frac{1}{2}}.\]
Then for any $Y\in \mathcal{S}_{p}^+(\mathbb{R})\subseteq \mathcal{M}_{p,p}(\mathbb{R})$, we have
\[||\sqrt{Y}||_2^2=||Y||_2.\]
Since $\mathcal{M}_{p,p}(\mathbb{R})$ is a $p^2$-dimensional space, by the equivalence of $|\cdot|$ and $||\cdot||_2$, there exists a constant $C$ such that
\[|\sqrt{Y}|^2\leq C|Y|.\]
Then by (\ref{eq4.15}) again, we get
\[|G'\otimes F(t,Y)|^2\leq |g(x-t)I_p|^2|\sqrt{Y}|^2\leq M^2pC|Y|\leq M^2pC(1+|Y|^2)\]
which implies $G'\otimes F(t,Y)$ is of linear growth.

It is obvious that $J(t,X)=vf(x-t)I_p$ is locally Lipschitz and of linear growth. Then by Lemma \ref{lem2.4} it suffices to check condition (\ref{lem2.4_eq1}) to complete the proof.

Indeed, we have
\begin{align}
&\mathrm{Tr}[J(t,X) X^{-1}]-\mathrm{Tr}[ff(t,X) X^{-1}]\mathrm{Tr}[ gg(t,X) X^{-1}]\nonumber\\
&\mbox{}-\mathrm{Tr}[ff(t,X) X^{-1}gg(t,X) X^{-1}]\nonumber\\
=&\mathrm{Tr}[(v-p-1)f(t-x)X^{-1}]\nonumber\\
\geq &0
\end{align}
and choose $c(t)=0$. Then the condition (\ref{lem2.4_eq1}) is satisfied and so the lemma holds.\hfill $\blacksquare$
\end{proof}

\begin{lemma}\label{lemma4.5}
Let $\eta_t$ satisfy the second equation of (\ref{eq4.12}). For each $t\in [0,T]$ , $\eta_t(x)$ is a $C^3$-map from $\mathbb{R}$ into $\mathcal{M}_{p,p}(\mathbb{R})$ a.s., i.e. $\eta_{ij,t}(x)$ $(i,j=1,2,\cdots,p)$ is a $C^3$-map from $\mathbb{R}$ into $\mathbb{R}$ a.s.
\end{lemma}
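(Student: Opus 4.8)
The plan is to establish $C^3$-dependence of $\eta_t(x)$ on the parameter $x$ by combining a Picard approximation with Kolmogorov's theorem (Lemma \ref{lemma2.2}), after localizing the matrix square root so that all coefficients become globally smooth with bounded derivatives. Writing the second equation of (\ref{eq4.12}) in integral form,
\[\eta_t(x)=\Sigma_0+\int_0^t vf(x-s)I_p\,ds+\int_0^t g(x-s)\left(\sqrt{\eta_s(x)}\,dW_s+dW_s'\sqrt{\eta_s(x)}\right),\]
recall from Lemma \ref{lemma4.3} that the solution exists for all $t$ with continuous paths that stay in the open cone $\mathcal{S}_p^+(\mathbb{R})$, on which $A\mapsto\sqrt{A}$ is $C^\infty$; moreover $f=g^2$ and $g$ belong to $C^{m,1}(\mathbb{R})$ with $m\ge4$ and, as noted after the construction of $g$, have bounded derivatives up to order $4$. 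The only obstruction to global regularity is that $\sqrt{\cdot}$ and its differentials are unbounded near $\partial\mathcal{S}_p^+(\mathbb{R})$. To deal with this, for $N\in\mathbb{N}$ set $K_N=\{A\in\mathcal{S}_p^+(\mathbb{R}):N^{-1}I_p\preceq A\preceq NI_p\}$, a compact set with $K_N\uparrow\mathcal{S}_p^+(\mathbb{R})$, fix a symmetric-matrix-valued $\psi_N\in C^\infty(\mathcal{S}_p(\mathbb{R}),\mathcal{M}_{p,p}(\mathbb{R}))$ that is bounded together with all its derivatives and equals $\sqrt{\cdot}$ on $K_N$, and let $\tau_N=\inf\{t\ge0:\eta_t\notin K_N\}$. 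Since the path $t\mapsto\eta_t(x)$ is continuous and remains in $\mathcal{S}_p^+(\mathbb{R})$ for all time (Lemma \ref{lemma4.3}, $T(x,\Sigma_0)=+\infty$ a.s.), we have $\tau_N\uparrow+\infty$ a.s., and on $[0,\tau_N]$ the process $\eta$ agrees with the solution $\eta^{N}$ of the same equation with $\sqrt{\cdot}$ replaced by $\psi_N$. It therefore suffices to prove the statement for $\eta^{N}$ with $N$ fixed, that is, we may assume the square root is a globally $C^\infty$ map with bounded derivatives of every order.

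Next I would run the Picard scheme $\eta^{(0)}_t(x)=\Sigma_0$ and
\[\eta^{(k+1)}_t(x)=\Sigma_0+\int_0^t vf(x-s)I_p\,ds+\int_0^t g(x-s)\left(\psi_N(\eta^{(k)}_s(x))\,dW_s+dW_s'\psi_N(\eta^{(k)}_s(x))\right),\]
and show by induction on $k$ that, for every $q>2$, every $T>0$ and every $R>0$, the field $\eta^{(k)}_t(\cdot)$ admits a modification that is $C^3$ in $x$ with $\partial_x^j$ commuting with the integrals for $j\le3$, and that $\sup_{k}\sup_{t\le T}\sup_{|x|\le R}E|\partial_x^j\eta^{(k)}_t(x)|^q<+\infty$ for $j\le3$. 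The inductive step uses the chain and product rules (Remark \ref{remark2.4}) to express $\partial_x^j[g(x-s)\psi_N(\eta^{(k)}_s(x))]$ as a universal polynomial in $\partial_x^i g(x-s)$, $D^i\psi_N(\eta^{(k)}_s(x))$ and $\partial_x^i\eta^{(k)}_s(x)$ with $i\le j$; the moment inequalities of Remark \ref{remark2.1} together with Hölder's inequality and Gronwall's lemma then bound the $q$-th moments of these derivatives uniformly in $k$, $t\le T$ and $x$ on compacts, using the global bounds on the derivatives of $g$, $f$ and $\psi_N$; finally Kolmogorov's theorem (Lemma \ref{lemma2.2}, in the case of three derivatives, applied to each stochastic and Lebesgue integral, those of the form $\int dW'(\cdot)$ being handled by transposition) upgrades the pointwise-in-$x$ derivatives to a genuine $C^3$ modification, the Hölder-in-$x$ hypotheses on $\partial_x^3$ of the integrands following from the Lipschitz continuity of $\partial_x^3 g$, $\partial_x^3 f$ and $D^3\psi_N$ combined with the moment bounds just obtained.

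Then I would pass to the limit $k\to\infty$. Standard Picard estimates — again via Remark \ref{remark2.1}, Hölder and Gronwall — show that $\sup_{t\le T}\sup_{|x|\le R}E|\partial_x^j\eta^{(k+1)}_t(x)-\partial_x^j\eta^{(k)}_t(x)|^q$ decays like $(C_{R,T}T)^{k}/k!$ for $j=0,1,2,3$; here the differences of the $j$-th derivatives satisfy a triangular system of linear estimates (coupling the $j$-th difference to the lower-order ones) precisely because the derivatives of $\psi_N$ are bounded. Hence, for each $j\le3$, $\partial_x^j\eta^{(k)}_t(x)$ converges uniformly in $t\le T$ and $|x|\le R$ in $L^q(\mathbf{\Omega})$ to a limit field which, by Kolmogorov's continuity criterion, has a jointly continuous modification; a routine argument identifies these limits with $\eta_t(x)$ and its $x$-derivatives, so $x\mapsto\eta_t(x)$ is $C^3$. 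Undoing the localization ($\tau_N\uparrow+\infty$ a.s.) gives that for each $t\in[0,T]$ the map $x\mapsto\eta_t(x)$, equivalently each $x\mapsto\eta_{ij,t}(x)$ ($i,j=1,\dots,p$), is a $C^3$-map from $\mathbb{R}$ into $\mathbb{R}$ a.s.

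The step I expect to be the main obstacle is the one dictated by the matrix square root: one must check that, once the localization is in place, all constants entering the Gronwall arguments — the sup-norms of $|D\psi_N|,|D^2\psi_N|,|D^3\psi_N|$ and the $q$-th moments of $\eta^{(k)}$ together with its $x$-derivatives — stay under control, so that both the uniform-in-$k$ a priori bounds and the contraction estimates for the derivative processes close; the choice $C^{m,1}$ with $m\ge4$ for $f$ and $g$ and the fact that only $C^3$ (not $C^m$) regularity is claimed leave exactly the margin needed for Lemma \ref{lemma2.2} to apply at each stage. One could alternatively bypass the Picard scheme altogether and invoke directly a parameter-dependence theorem for stochastic differential equations with $C^{k,\delta}$ coefficients (for instance from \cite{Kunita82}) after the same localization, the analytic content being the same.
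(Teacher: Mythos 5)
Your proposal is sound in outline, but it follows a genuinely different route from the paper. You localize in essentially the same way the paper does (the paper extends $h(\xi,A)=g(\xi)\sqrt{A}$ off a compact set $I_N\times V_N$ to a globally Lipschitz map $h^N$ with bounded derivatives up to order $4$, exactly the role of your $\psi_N$), but from there the paper never runs a Picard iteration: it works directly with the localized solution $\eta^N$, proves $L^q$ estimates of the form $E|\eta^N_u(z)-\eta^N_u(y)|^q\le C(q)|z-y|^q$ and, for the difference quotients $R^\eta_u(x,y)$, bounds $E|R^\eta_u(x,y)|^q\le C(q)$ and $E|R^\eta_u(x,y)-R^\eta_u(x,z)|^q\le C(q)|y-z|^q$ via the matrix martingale moment inequality, Taylor's formula with integral remainder and Gronwall, and then invokes Kolmogorov's theorem (Lemma \ref{lemma2.2}) to get continuity of $R^\eta_u(x,\cdot)$ at $y=0$, i.e.\ differentiability; second and third derivatives are obtained by repeating this for the formally differentiated SDEs such as (\ref{eq4.28}). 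Your scheme instead differentiates each Picard iterate (legitimate, since the integrand involves the previous, already smooth iterate) and passes derivatives to the limit. Both approaches rest on the same toolkit, and yours has the advantage of only ever differentiating explicit iterates; the paper's (Kunita-style) argument avoids the one step in your plan that is not quite ``routine'': to identify the limit fields as $\partial_x^j\eta_t(x)$ you must upgrade the uniform-in-$(t,x)$ $L^q$ convergence of $\partial_x^j\eta^{(k)}$ to almost sure convergence that is locally uniform in $x$ (e.g.\ via the summable decay of the increments, a Kolmogorov-type estimate applied to the differences of consecutive iterates, and Borel--Cantelli), since only then may you interchange limit and derivative pathwise. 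Also note that your stopping times $\tau_N$ depend on $x$, so undoing the localization for the map $x\mapsto\eta_t(x)$ requires containment in $K_N$ uniformly over $x$ in a compact interval (the paper faces, and largely glosses over, the same point by fixing $I_N\times V_N$ and appealing to arbitrariness of $x$); this is repairable but should be said. Your closing remark that one could instead cite a parameter-dependence theorem from \cite{Kunita82} is apt: the paper's proof is essentially a self-contained adaptation of that method rather than an appeal to a ready-made statement.
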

\begin{proof}
Firstly, from the proof of Theorem  4.11  of \cite{Pfaffel}, we know that $\mathcal{S}_{p}^+( \mathbb{R})$ is open in $\mathcal{S}_{p}( \mathbb{R})$ and there exist closed convex sets $U_n\in\mathcal{S}_{p}^+( \mathbb{R})$ such that $U_n\subseteq U_{n+1}$ and $\bigcup\nolimits_{n = 1}^{ + \infty } {{U_n}}  =\mathcal{S}_{p}^+( \mathbb{R}) $.
Let
$${V_n} = {U_n} \cap \{ X \in \mathcal{S}_{p}^+( \mathbb{R}): |X| \le n\}, \quad n=1,2,\cdots.$$
Then it is easy to see that  $V_n$ is compact and convex such that
$$
V_n\subseteq V_{n+1}, \quad \bigcup\limits_{n = 1}^{ + \infty } {{V_n}}  =\mathcal{S}_{p}^+( \mathbb{R}).
$$
Define $I_n=\{x\in \mathbb{R}: |x|<n\}$ and let $I$ be a finite open interval. For all fixed $t\in [0,T]$ and $x\in I \subseteq\mathbb{R}$, we can find a positive integer $N$ such that $x\in I_N$ and $\xi_t(x)=x-t\in I_N$. As the map $h:\mathcal{S}_{p}^+( \mathbb{R})\to \mathcal{S}_{p}^+( \mathbb{R})$ defined by  $A\mapsto \sqrt A$ is analytic (see P134 in \cite{Rogers1987} or Section 7.14 in \cite{Ciarlet}), we can choose $h^N(\xi,A):\mathbb{R}\times \mathcal{S}_{p}^+( \mathbb{R})\rightarrow \mathcal{S}_{p}( \mathbb{R})$ such that $h^N(\xi,A)$ is globally  Lipschitz in $\mathbb{R}\times \mathcal{S}_{p}^+( \mathbb{R})$ such that,  if $\xi\in I_N$ and $A\in V_N$,
then
\begin{equation}\label{eq4.19+}
h^N(\xi,A)=g(\xi)h(A)=g(\xi)\sqrt A
\end{equation}
(see Remark \ref{extension}). Assume that the Lipschitz constant of $h^N$ and $f$ is $K$, i.e. for any $(\xi_1,A_1), (\xi_2,A_2) \in \mathbb{R}\times \mathcal{S}_{p}^+( \mathbb{R})$,
\[
\left|{h^N}({\xi _1},{A_1}) - {h^N}({\xi _2},{A_2})\right| \le K(|{\xi _1} - {\xi _2}| + |{A_1} - {A_2}|)
\]
and
\[|f(\xi _1)-f(\xi _2)|\leq K |\xi _1-\xi _2|.\]

Let $X^N_s=({\xi _s^N},{\eta ^N_s})\in \mathbb{R} \times \mathcal{S}_{p}( \mathbb{R})$ satisfy the following SDEs:
\begin{eqnarray}\label{eq4.19}
\left\{
\begin{array}{l}
d{\xi _s^N} =  - ds,\quad {\xi^N _0} = x \in \mathbb{R},\; a.s.\\
d\eta _s^N = vf({\xi _s^N})I_pds + h^N(X^N_s)d{W_s} + d{W_s}'h^N({X^N_s}),\quad {\eta _0} = {\Sigma _0}\in \mathcal{S}_{p}^+( \mathbb{R}),\; a.s.
\end{array}
\right.
\end{eqnarray}
It follows from Theorem 1.2 in Chapter II in \cite{Kunita82} that there exists a unique strong solution $X^N_s$ on $s\in[0,+\infty)$. Further, $\eta _s^N(x)$ is in $L^q$ for any $q\geq 1$. The solution $X^N_s$ must be equal to $X_s=({\xi _s},{\eta_s })$ as long as it stays in $I_N\times V_N$, and vice versa, as $h^N(\xi,A)=g(\xi)\sqrt A$ on $I_N\times V_N$. By the arbitrariness of $x$, it suffices to check that $\eta_t^N(\cdot)$ is a $C^3$-map a.s.

Let $X_0=(\xi_0,A_0)$ be fixed in $I_N\times V_N$. Since $\eta _s^N(x)$ is in $L^q$ for any $q\geq 1$, it is not hard to prove that for any $y\; ,z\in I$ and $u\in [0,T]$, the integrals
$$
\int_0^u {[h^N(X_s^N(z))} - h^N(X_s^N(y))]d{W_s},\quad  \int_0^u {{h^N}(X_s^N(y))d{W_s}}
$$
are continuous martingales.

By Lemma \ref{MMI} and Remark \ref{remark2.1}, for any $q>2$, $y,\;z\in I$ and $u\in [0,T]$, we have
\begin{eqnarray}\label{eq4.21}
&&E|\eta_u^N(z) - \eta_u^N(y){|^q}\nonumber\\
 &=& E\left(\left|\int_0^u {v[f(\xi _s^N(z)) - f(\xi _s^N(y))} ]I_pds + \int_0^u {[h^N(X_s^N(z))}  - h^N(X_s^N(y))]d{W_s}\right.\right.\nonumber\\
 &&\left.\left.\mbox{}+ \int_0^u {d{W_s}'[h^N(X_s^N(z))}  - h^N(X_s^N(y))]\right|\right)^q\nonumber\\
 &\le& {C}(q)E\left( \left|\int_0^u v[f(\xi _s^N(z)) - f(\xi _s^N(y)) ]ds\right|^q + 2\left|\int_0^u {[h^N(X_s^N(z))}  - h^N(X_s^N(y))]d{W_s}\right|^q\right)\nonumber\\
 &\le& {C}(q)E\left(\int_0^u {|f(\xi _s^N(z)) - f(\xi _s^N(y))} {|^q}ds + \int_0^u {|h^N(X_s^N(z))}  - h^N(X_s^N(y)){|^q}ds\right)\nonumber\\
 &\le& {C}(q)E\left(K^q\int_0^u {|\xi _s^N(z) - \xi _s^N(y)} {|^q}ds + {K^q}\int_0^u {|X_s^N(z))}  - X_s^N(y){|^q}ds\right)\nonumber\\
 &\le& C(q)\left(|z - y{|^q} + \int_0^u {E|\eta_s^N(z))}  - \eta_s^N(y){|^q}ds\right).
\end{eqnarray}
By Gronwall's inequality (see, for example, Problem 5.2.7 in \cite{Revuz}), it follows from (\ref{eq4.21}) that
\begin{equation}\label{eq4.22}
E|\eta_u^N(z) - \eta_u^N(y){|^q} \le {C}(q)|z - y{|^q}.
\end{equation}
For any $u\in [0,T]$ and $x\in I$, by Lemma \ref{MMI} and Remark \ref{remark2.1}, we have
\begin{eqnarray*}
&&E|\eta_u^N(x){|^q}\\
&\le& C(q)E\left(1+\left|\int_0^u {vf(\xi _s^N(x)){I_p}ds} \right|^q + \left|\int_0^u {{h^N}(X_s^N(x))d{W_s}} \right|^q\right)\\
&\le& {C}(q)\left(1 + E\int_0^u {|{h^N}(X_s^N(x)){|^q}ds} \right)\\
&\le& {C}(q)\left(1 + E\int_0^u {{{\left(|{h^N}(X_0)| + K(|{\xi _0} - \xi _s^N(x)| + |{A_0} - \eta _s^N(x)|)\right)}^q}ds} \right)\\
&\le& {C}(q)\left(1 + \int_0^u {E|\eta_s^N(x){|^q}ds} \right)
\end{eqnarray*}
which implies, by Gronwall's inequality, that
\begin{equation}\label{eq4.24}
E|\eta_u^N(x)|^q\leq C(q).
\end{equation}
Now by (\ref{eq4.22}) and (\ref{eq4.24}), one can easily check that all the conditions of Lemma \ref{lemma2.2} are satisfied. Thus, by Lemma \ref{lemma2.2}, we know that  $\eta_u^N(z)$ is continuous at $(t,x)\in [0,T]\times I$ a.s. By the arbitrariness of $t$ and $x$, $\eta_u(z)$ is continuous at $(t,x)\in [0,T]\times \mathbb{R}$ a.s.

It is obvious that $f$, $g$ on $I_{N}$ and $\sqrt A$ on $V_{N}$ are $4$-times continuously differentiable in each variable and all their $k$-th  ($k\leq 4$) derivatives are bounded. In order to prove $\eta_t^N(x)$ is continuously differentiable at $x$ for $t\in[0,T]$, a.s., we need more conditions on $h^N(\cdot)$ defined by (\ref{eq4.19+}). Here we extend  $h^N(\xi,\eta)=g(\xi)h^*(\eta)$ (see Remark \ref{extension}) satisfying that $h^*:\mathcal{S}_p(\mathbb{R})\rightarrow\mathcal{S}_p(\mathbb{R})$ coincides with $h(\cdot)=\sqrt{\cdot}$ on $V_{N}$ and that there exists a constant $Q>0$ such that
\begin{equation}\label{eq4+3}
\sup \{|\partial_\xi^mg(\xi)|,|D_\eta^lh^*(\eta)|: \; m,l=0,\cdots,4; \xi\in \mathbb{R}, \eta\in \mathcal{S}_p(\mathbb{R})\}  \le Q.
\end{equation}
Denote the derivatives of $h^N(\cdot)$ by
\[\partial_\xi^mD_\eta^lh^N(\xi,\eta)=\partial_\xi^mg(\xi)D_\eta^lh^*(\eta)\]
for any $\xi\in\mathbb{R}$ and $\eta\in\mathcal{S}_p(\mathbb{R})$. Then it is seen that $\partial_\xi^mD_\eta^lh^N(\xi,\eta)$ are globally  Lipschitz and bounded in $\mathbb{R}\times \mathcal{S}_{p}^+( \mathbb{R})$ for $m,l\leq 3$.

Let
\[ R_u^\eta (z,y) = \frac{1}{y}(\eta _u^N(z + y) - \eta _u^N(y))\]
for any $z,z+y\in I$.
In order to prove that $ \eta_u^N(z)$ is differentiable at $z=x\in I$ a.s., it suffices to check that ${R_u^\eta}(x,y)$ is continuous at $y=0$ a.s.

For any $q>2$, by Lemma \ref{MMI} and Remark \ref{remark2.1}, we have
\begin{eqnarray*}
&&E|{R_u^\eta}(z,y){|^q}\\
 &\le& {C}(q)E\left(\left|\int_0^u v\frac{1}{y}[f(\xi _s^N(z + y)) - f(\xi _s^N(y)) ]ds\right|^q \right.\\
 &&\left.\mbox{}+ 2\left|\int_0^u \frac{1}{y}[h^N(X_s^N(z + y)) - h^N(X_s^N(y))]d{W_s}\right|^q\right)\\
 &\le& {C}(q)E\left(\int_0^u {{(vK)}^q}\left|\frac{1}{y}(\xi _s^N(z + y) - \xi _s^N(y) )\right|^qds + 2\int_0^u {K^q}\left|\frac{1}{y}(X_s^N(z + y)  - X_s^N(y))\right|^qds\right)\\
 &\le& {C}(q)\left(1 + \int_0^u {E|{R_s^\eta}(z,y){|^q}} ds\right),
\end{eqnarray*}
which implies, by Gronwall's inequality, that
\begin{equation}\label{eq4.25}
E|{R_u^\eta}(z,y){|^q} \le {C}(q).
\end{equation}
By (\ref{eq4.22}), (\ref{eq4+3}), (\ref{eq4.25}), Lemma \ref{MMI} and Remark \ref{remark2.1}, for any $q>2$, we get
\begin{eqnarray*}
&&E\bigg|\int_0^u {\bigg\{ \frac{1}{y}[{h^N}(\xi _s^N(x + y),\eta _s^N(x + y)) - {h^N}(\xi _s^N(x),\eta _s^N(x))]} \\
&& \mbox{}- \frac{1}{z}\left[{h^N}(\xi _s^N(x + z),\eta _s^N(x + z)) - {h^N}(\xi _s^N(x),\eta _s^N(x))\right]\bigg\} d{W_s}{\bigg|^q}\\
 &=& E\bigg|\int_0^u {\bigg\{ \frac{1}{y}\left[{h^N}(\xi _s^N(x + y),\eta _s^N(x + y)) - {h^N}(\xi _s^N(x + y),\eta _s^N(x))\right]} \\
&& \mbox{}+ \frac{1}{y}\left[{h^N}(\xi _s^N(x + y),\eta _s^N(x)) - {h^N}(\xi _s^N(x),\eta _s^N(x))\right]\\
&& \mbox{}- \frac{1}{z}\left[{h^N}(\xi _s^N(x + z),\eta _s^N(x + z)) - {h^N}(\xi _s^N(x + z),\eta _s^N(x))\right]\\
&& \mbox{}- \frac{1}{z}\left[{h^N}(\xi _s^N(x + z),\eta _s^N(x)) - {h^N}(\xi _s^N(x),\eta _s^N(x))\right]\bigg\} d{W_s}{\bigg|^q}\\
 &=& E\bigg|\int_0^u \bigg\{  \int_0^1 {{D_\eta }h^N(\xi _s^N(x + y),\eta _s^N(x) + w(\eta _s^N(x + y) - \eta _s^N(x)))}(R_s^\eta (x,y)) dw\\
&& \mbox{}+ \int_0^1 {{\partial _\xi }h^N(\xi _s^N(x) + w(\xi _s^N(x + y) - \xi _s^N(x)),\eta _s^N(x))} dw\\
&& \mbox{}-   \int_0^1 {{D _\eta }h^N(\xi _s^N(x + z),\eta _s^N(x) + w(\eta _s^N(x + z) - \eta _s^N(x)))}({ R_s^\eta (x,z)}) dw\\
&& \mbox{}- \int_0^1 {{\partial _\xi }h^N(\xi _s^N(x) + w(\xi _s^N(x + z) - \xi _s^N(x)),\eta _s^N(x))} dw\bigg\} d{W_s}{\bigg|^q}\\
 &\le& C(q)E\bigg\{ \int_0^u \bigg|
\int_0^1 {D_\eta }h^N(\xi _s^N(x + y),\eta _s^N(x) + w(\eta _s^N(x + y) - \eta _s^N(x)))\\
  &&\mbox{}\circ(R_s^\eta (x,y) - R_s^\eta (x,z)) dw{\bigg|^q}ds \\
&& \mbox{}+ \int_0^u \bigg|\int_0^1 \left[{D_\eta }h^N(\xi _s^N(x + y),\eta _s^N(x) + w(\eta _s^N(x + y) - \eta _s^N(x)))\right.  \\
&& \mbox{}- \left.{D_\eta }h(\xi _s^N(x + z),\eta _s^N(x) + w(\eta _s^N(x + z) - \eta _s^N(x)))\right](R_s^\eta (x,z))dw{\bigg|^q}ds\\
&& \mbox{}+ \int_0^u \bigg|\int_0^1 \left[{\partial _\xi }h^N(\xi _s^N(x) + w(\xi _s^N(x + y) - \xi _s^N(x)),\eta _s^N(x))\right. \\
&& \mbox{}- \left.{\partial _\xi }h^N(\xi _s^N(x) + w(\xi _s^N(x + z) - \xi _s^N(x)),\eta _s^N(x))\right]dw {\bigg|^q}ds\bigg\} \\
 &\le& C(q)\left[E\int_0^u {|R_s^\eta (x,y) - R_s^\eta (x,z){|^q}ds}  + |y - z{|^q}\right],
\end{eqnarray*}
where we used the chain rule in Remark \ref{remark2.4} and Taylor formula with integral remainder (see Theorem 7.9-1 in \cite{Ciarlet}). Similarly, we have
\begin{eqnarray*}
&&E\left|\int_0^u \frac{1}{y}[f(\xi _s^N(x + y)) - f(\xi _s^N(x))]ds - \int_0^u \frac{1}{z}[f(\xi _s^N(x + z)) -  f(\xi _s^N(x))]\right|^q\\
 &\le& {C}(q)\left(\int_0^u {E|{R_s^\eta}(x,y) - {R_s^\eta}(x,z)} {|^q}ds + |y - z{|^q}\right).
\end{eqnarray*}
Consequently, it follows that
\begin{equation*}
E|R_u^\eta (x,y) - R_u^\eta (x,z)|^q
\leq{C}(q)\left(\int_0^u {E|{R_s^\eta}(x,y) - {R_s^\eta}(x,z)} {|^q}ds + |y - z{|^q}\right),
\end{equation*}
which implies, by Gronwall¡¯s inequality, that
\begin{equation}\label{eq4.26}
E|{R_u}(x,y) - {R_u}(x,z){|^q} \le {C}(q)|y - z{|^q}.
\end{equation}

Note that in order to prove (\ref{eq4.25}) and (\ref{eq4.26}), we use the continuous martingale property of the integrals with respect to $W_t$ in above inequalities which is easy to prove as $\eta _s^N(x)$ is in $L^q$ for any $q\geq 1$.

Now by (\ref{eq4.25}) and (\ref{eq4.26}), it is easy to see that all the conditions of Lemma \ref{lemma2.2} are satisfied. Thus, by Lemma \ref{lemma2.2}, we conclude that ${R_u^\eta}(x,y)$ is continuous at $y=0$ a.s. and so $\eta_u^N(z)$ is continuously differentiable at $x\in I$ for any $u\in [0,T]$ a.s.

We can prove the claim that $\eta_u^N(z)$ is $k$-times ($k=2,3$) continuously differentiable at $x$ for any $u\in [0,T]$ a.s. in a similar way by considering the related SDEs and (\ref{eq4+3}). For example, when $k=2$, the related SDEs are given as follows:
\begin{eqnarray}\label{eq4.28}
\left\{\begin{array}{l}
d{\partial_y}{\xi _u^N}(y) = 0,\\
d{\partial_y}\eta _u^N(y)  =  v{\partial_\xi }f({\xi _u^N(y)}){\partial_y}{\xi _u^N}(y)I_pdu \\
 \qquad\qquad\quad \mbox{}+ {\partial_\xi }g({\xi _u^N(y)}){\partial_y}{\xi _u^N}(y)\left[h^*(\eta _u^N(y))d{W_u} + d{W_u}'h^*(\eta _u^N(y))\right]\\
 \qquad\qquad\quad \mbox{}+ g({\xi _u^N(y)})\left[{D_\eta }h^*(\eta _u^N(y))({\partial_y}\eta _u^N(y))d{W_u} + d{W_u}'{D_\eta }h^*(\eta _u^N(y))({\partial_y}\eta _u^N(y))\right],
\end{array}\right.
\end{eqnarray}
with the initial conditions  ${\partial_y}{\xi _0^N}(y)=1$ and ${\partial_y}\eta _0^N(y)=0$ a.s.

Therefore, by the discussions mentioned above and the arbitrariness of $x$ and $t$, we obtain that $\eta_t(x)$ is a $C^3$-map a.s. for each $t\in [0,T]$. \hfill $\blacksquare$

\end{proof}

\begin{remark}\label{extension}
Here we explain the feasibility of the extension in (\ref{eq4+3}). Owing to the definition of $g(x)$, it suffices to extend $h(A)=\sqrt{A}$ ($A\in V_N$) to $h^*(A)$ ($A\in\mathcal{S}_p(\mathbb{R})$) such that $h^*(A)=h(A)$ for $A\in V_N$ and that $D^kh(A)$ is continuous and bounded for $k\leq 4$ and $A\in\mathcal{S}_p(\mathbb{R})$. Without loss of generality, we just prove the case of $k\leq 1$. By extension of Tietze's theorem in \cite{Dugundji}, $D^1h(A)$ ($A\in V_N$) can be extended to a continuous map $h_1$ on $\mathcal{S}_p(\mathbb{R})$. Making use of Urysohn's lemma (see Lemma 15.6 in \cite{Willard}), there exists a continuous function $h_2$ defined on $\mathcal{S}_p(\mathbb{R})$ such that
\begin{equation*}
h_2(A)=\left\{\begin{array}{ll}
1,&A\in V_N;\\
0,&A\in \mathcal{S}_p(\mathbb{R})  \backslash C,
\end{array}\right.
\end{equation*}
where $C$ is an open convex set in $\mathcal{S}_p(\mathbb{R})$ satisfying that $A_N\subseteq C$ and $0\in \bar{C}\backslash C $ ($\bar{C}$ is the closure of $C$). Define $h_0(A)=h_1(A)h_2(A)$ for $A\in\mathcal{S}_p(\mathbb{R})$ and it is easy to see that $h_0$ is continuous and bounded on $\mathcal{S}_p(\mathbb{R})$. Now define $h^*$ as follows
\begin{equation*}
h^*(A)=\int_0^1 h_0(sA)(A)ds, \quad A\in \mathcal{S}_p(\mathbb{R}).
\end{equation*}
Let $y(t)=h^*(tA)=\int_0^t h_0(sA)(A)ds$ be an abstract function on $[0,1]$. Then using the continuity of $h_0$, it is easy to see
\[\frac{dy(t)}{dt}=h_0(tA)(A),\]
which implies $D^1h^*$ exists. Since the chain rule gives
\[\frac{dy(t)}{dt}=D^1h^*(tA)(A),\]
we obtain $D^1h^*=h_0$. Consequently, we obtain the feasibility of the extension in (\ref{eq4+3})
\end{remark}

\begin{remark}\label{remark4.4+}
By the arbitrariness of $I$ and $I_N\times V_N$, since we have that $\eta_t(x)$ coincides with $\eta^N_t(x)$ if both of the solutions stay in $V_N$, $\eta_t(x)$ should also satisfies the second equation of (\ref{eq4.28}).
\end{remark}

Let ${\bar \eta _t}(x) = {\eta _t}(x,{\Sigma _0})$. By (\ref{eq4.12}), it is easy to see that ${\xi _t}(x) = x - t$ and so $\xi _t^{ - 1}(x) = x + t$.
Let
$$u_t(x)={{\bar \eta }_t}(\xi _t^{ - 1}(x))={{\bar \eta }_t}(x + t).$$

Now we are in the position to give the following theorem.

\begin{theorem}\label{th4.3}
If $v\geq p+1$, then $u_t(x)$ is a solution in $\mathcal{S}_{p}^+( \mathbb{R}) $ of SPDE (\ref{eq4.10}) with the initial condition $u(0,x)=\Sigma_0\in \mathcal{S}_{p}^+( \mathbb{R}) $ for any $t\in [0,T]$ $(T\in \mathbb{R})$. Furthermore, if we assume that the solution of (\ref{eq4.10}) is a semimartingale for each $x\in \mathbb{R}$ and continuous in $(t,x)$ and $3$-times continuously differentiable in $x$ a.s., then $u_t(x)$ is the unique solution in $\mathcal{S}_{p}^+( \mathbb{R}) $ of (\ref{eq4.10}).
\end{theorem}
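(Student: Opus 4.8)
The plan is to treat (\ref{eq4.10}) as a first-order SPDE and solve it by the method of stochastic characteristics: along the characteristic curve $\xi_t(x)=x-t$ the transport term $\partial_x u_t(x)$ is exactly absorbed, and (\ref{eq4.10}) collapses to the characteristic SDE (\ref{eq4.12}), whose well-posedness and spatial regularity have already been obtained in Lemmas \ref{lemma4.3} and \ref{lemma4.5}. Accordingly, with $\bar\eta_t(x)=\eta_t(x,\Sigma_0)$ the solution of (\ref{eq4.12}) and $u_t(x)=\bar\eta_t(\xi_t^{-1}(x))=\bar\eta_t(x+t)$ as introduced before the theorem, I would first verify that $u_t(x)$ is a solution of (\ref{eq4.10}) and then that it is the only one in the prescribed regularity class.

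\emph{Existence.} Since $v\ge p+1$, Lemma \ref{lemma4.3} yields a unique adapted continuous strong solution $(\xi_t,\eta_t)_{t\in\mathbb{R}_+}$ of (\ref{eq4.12}) with $T(x,\Sigma_0)=+\infty$ a.s.; in particular $\bar\eta_t(y)\in\mathcal{S}_p^+(\mathbb{R})$ for all $t$ and $y$ a.s., so $u_t(x)=\bar\eta_t(x+t)\in\mathcal{S}_p^+(\mathbb{R})$ and $\sqrt{u_t(x)}$ is well defined. By Lemma \ref{lemma4.5}, $y\mapsto\bar\eta_t(y)$ is a $C^3$-map a.s., so the field $F_t(y):=\bar\eta_t(y)$, together with its semimartingale representation read off from (\ref{eq4.12}), fulfils the hypotheses of the generalized It\^{o} formula (Lemma \ref{ito}). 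I would apply Lemma \ref{ito} to $F$ along the continuous semimartingale $X_t=x+t$; since $X_t$ has finite variation, $\langle X\rangle_t\equiv 0$ and the covariation of $X$ with the driving noise vanishes, so every It\^{o}-correction term drops out. Substituting $\xi_t(x+t)=x$ in (\ref{eq4.12}) and using $\partial_y\bar\eta_t(x+t)=\partial_x u_t(x)$ then gives
\begin{equation*}
du_t(x)=(\partial_x u_t(x)+vf(x)I_p)\,dt+g(x)(\sqrt{u_t(x)}\,dW_t+dW_t'\sqrt{u_t(x)}),
\end{equation*}
which is exactly (\ref{eq4.10}), and $u_0(x)=\bar\eta_0(x)=\Sigma_0$ supplies the initial condition; the same computation exhibits $u_t(x)$ as a semimartingale in $t$, and together with the continuity and $C^3$-regularity inherited from Lemma \ref{lemma4.5} this places $u$ in the class appearing in the uniqueness statement.

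\emph{Uniqueness.} Let $\tilde u_t(x)$ be any $\mathcal{S}_p^+(\mathbb{R})$-valued solution of (\ref{eq4.10}) that is a semimartingale in $t$ for each $x$, continuous in $(t,x)$ and $3$-times continuously differentiable in $x$ a.s., and set $\tilde\eta_t(x):=\tilde u_t(\xi_t(x))=\tilde u_t(x-t)$. Applying Lemma \ref{ito} to the field $\tilde u_t$ along $X_t=x-t$ (again of finite variation, so all correction terms vanish), the chain-rule contribution $-\partial_x\tilde u_t\,dt$ cancels the transport term $\partial_x\tilde u_t\,dt$ coming from the drift of (\ref{eq4.10}), and one is left with
\begin{equation*}
d\tilde\eta_t(x)=vf(\xi_t(x))I_p\,dt+g(\xi_t(x))(\sqrt{\tilde\eta_t(x)}\,dW_t+dW_t'\sqrt{\tilde\eta_t(x)}),\qquad\tilde\eta_0(x)=\Sigma_0,
\end{equation*}
i.e. $(\xi_t,\tilde\eta_t)$ is an adapted continuous strong solution of (\ref{eq4.12}). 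By the uniqueness part of Lemma \ref{lemma4.3}, $\tilde\eta_t(x)=\bar\eta_t(x)$ for all $t,x$ a.s., and therefore $\tilde u_t(x)=\tilde\eta_t(x+t)=\bar\eta_t(x+t)=u_t(x)$ for all $t,x$ a.s.

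\emph{The main obstacle} is the rigorous use of the generalized It\^{o} formula for the matrix-valued field with the two-sided noise $g(x)(\sqrt{u}\,dW+dW'\sqrt{u})$: Lemma \ref{ito} is formulated for integrals of the one-sided form $\int f_s\,dY_s$, so the dynamics in (\ref{eq4.12}) and (\ref{eq4.10}) must first be recast — using that $\sqrt{\bar\eta_s}$ is symmetric and $g$ scalar, the right integral $\int_0^t dW_s'\,g(\xi_s(x))\sqrt{\bar\eta_s(x)}$ is the transpose of the left integral $\int_0^t g(\xi_s(x))\sqrt{\bar\eta_s(x)}\,dW_s$ — and Lemma \ref{ito} is then applied separately to the drift piece (with $Y_s=sI_p$), to the left martingale piece (with $Y_s=W_s$), and to the transpose of the latter, the outcomes being recombined. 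Writing the drift of (\ref{eq4.10}) in the form required by Lemma \ref{ito} forces its integrand $\partial_x u_s+vf\,I_p$ to be $C^2$ in $x$, which is precisely why the $C^3$ regularity supplied by Lemma \ref{lemma4.5} — and the $C^3$ hypothesis imposed in the uniqueness part — is indispensable. One must also check carefully that the composition of a spatially-$C^3$ random-field semimartingale with the deterministic characteristic curve $t\mapsto x\pm t$ is again a semimartingale and that every quadratic-(co)variation term involving that curve vanishes, which is where the finite variation of the characteristics is used decisively.
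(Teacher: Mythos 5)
Your proposal is correct and follows essentially the same route as the paper's own proof: existence by applying the generalized It\^{o} formula (Lemma \ref{ito}) to $\bar\eta_t(x+t)$ along the finite-variation characteristic $\xi_t^{-1}(x)=x+t$, using Lemmas \ref{lemma4.3} and \ref{lemma4.5}, and uniqueness by composing any solution with $x-t$ to recover the characteristic SDE (\ref{eq4.12}) and invoking its strong uniqueness. Your additional remarks on recasting the two-sided noise via transposition and on why $C^3$ regularity is needed only make explicit points the paper treats implicitly.
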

\begin{proof}
By Lemmas \ref{ito} and \ref{lemma4.5}, we have
\begin{equation}\label{eq4.29}
d{{\bar \eta }_t}(\xi _t^{ - 1}(x)) = vf(x){I_p}dt + g(x)\left(\sqrt {{{\bar \eta }_t}(x + t)} d{W_t} + d{W_t}'\sqrt {{{\bar \eta }_t}(x + t)} \right)
 + \partial {{\bar \eta }_t}(x + t)dt,
\end{equation}
as $d\xi _t^{ - 1}(x)=dt$ and $\langle\xi ^{ - 1}(x) \rangle_t=0$.

Since $u_t(x)={{\bar \eta }_t}(x + t)$ and
$$\partial {{\bar \eta }_t}(x + t)=\partial_y{{\bar \eta }_t}(y)|_{y=x+t}=\partial_x {{\bar \eta }_t}(x + t)=\partial_xu_t(x),$$
it is clear, by (\ref{eq4.29}), that $u_t(x)$ satisfies SPDE (\ref{eq4.10}). Furthermore, Lemma \ref{lemma4.5} shows that $u_t(x)$ is a semimartingale for each $x\in \mathbb{R}$ and continuous in $(t,x)$ and $3$-times continuously differentiable in $x$ a.s.

Assume that $\hat{u}_t(x)$ is another solution of (\ref{eq4.10}), which is a semimartingale for each $x\in \mathbb{R}$ and continuous in $(t,x)$ and $3$-times continuously differentiable in $x$ a.s. Now $\partial_x\hat{u}_t(x)$ is continuous in $(t,x)$ and twice continuously differentiable in $x$ a.s. such that all the conditions of Lemma \ref{ito} are satisfied. Thus, by Lemma \ref{ito}, we get
\begin{eqnarray*}
d{\hat{u}_t}(x - t) &=& (d{\hat{u}_t})(x - t) + {\partial _x}{\hat{u}_t}(x - t)d(x - t)\\
 &=& vf(x - t){I_p}dt + g(x - t)\left(\sqrt {{\hat{u}_t}(x - t)} d{W_t} + d{W_t}'\sqrt {{\hat{u}_t}(x - t)}\right)
\end{eqnarray*}
with $\hat{u}_0(x - 0)=\Sigma_0$, which implies that ${\hat{u}_t}(x - t)$ is the strong solution of (\ref{eq4.12}). By the uniqueness of the solution of (\ref{eq4.12}), we have ${\hat{u}_t}(x - t)=\eta_t(x,\Sigma_0)={{\bar \eta }_t}(x)$, which shows that ${\hat{u}_t}(x )={{\bar \eta }_t}(x+t)=u_t(x)$. Consequently, we have proved the theorem.\hfill $\blacksquare$
\end{proof}

\begin{definition}\label{def4.2}
For any given real number $v>0$, an $\varepsilon$-fractional Wishart process $\varepsilon$-$fWIS(H,v,p,\Sigma_0)$ with $\Sigma_0\in S_p^+(\mathbb{R})$ is defined by the solution $u_t(\varepsilon)$ of (\ref{eq4.10}) in $\mathcal{S}_p^+(\mathbb{R})$.
\end{definition}

With Theorem \ref{th4.3}, we can make sure that the $\varepsilon$-fractional Wishart processes  $\varepsilon$-$fWIS(H,v,p,\Sigma_0)$ always exist under some conditions.

\begin{remark}
As Cauchy problem (\ref{eq4.10}) has a unique solution, we know that SPDE (\ref{eq4.4}) also has a unique solution  and so the solution of (\ref{eq4.4}) can be determined by its initial condition without the boundary conditions.  If $H=\frac{1}{2}$, then SPDE (\ref{eq4.4}) degenerates to SDE considered in \cite{Bru} which governs the Wishart process. Therefore,  when $H=\frac{1}{2}$, the $\varepsilon$-fractional Wishart process defined by Definition \ref{def4.2}  reduces to the Wishart process defined in \cite{Bru} or \cite{Pfaffel}.
\end{remark}

\begin{theorem}\label{th4.4}
For $v\geq p+1$, the Laplace transform of $u_t(\varepsilon)$ in (\ref{eq4.10}) is given by
\begin{eqnarray}\label{eq4.33}
&&E[\mathrm{etr}( - Zu_t(\varepsilon ) )]\nonumber\\
&=& \det {\left({I_p} + 2({(t + \varepsilon )^{2H}} - {\varepsilon ^{2H}})Z\right)^{ - \frac{v}{2}}}\mathrm{etr}\left( - Z{({I_p} + 2({(t + \varepsilon )^{2H}} - {\varepsilon ^{2H}})Z)^{ - 1}}\Sigma _0 \right)
\end{eqnarray}
for any $Z \in \mathcal{S}^+_{p}( \mathbb{R})$.
\end{theorem}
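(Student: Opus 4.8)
The plan is to realise $u_t(\varepsilon)$, for a fixed $t$, as the time-$t$ value of the characteristic process from (\ref{eq4.12}) --- which, unlike $u_t(\varepsilon)$ itself (whose dynamics (\ref{eq4.10}) at $x=\varepsilon$ contain the spatial term $\partial_x u_t(\varepsilon)$), is an autonomous time-inhomogeneous Wishart-type semimartingale --- and then to compute its Laplace transform by the affine ansatz. Recall from the construction preceding Theorem~\ref{th4.3} that $u_t(x)=\bar\eta_t(x+t)$, where for each fixed $y$ the process $(\bar\eta_s(y))_{s\ge0}=(\eta_s(y,\Sigma_0))_{s\ge0}$ is the $\eta$-component of (\ref{eq4.12}) with $\xi_0=y$; since $\xi_s=y-s$ there, it solves
\[
d\bar\eta_s(y)=vf(y-s)I_p\,ds+g(y-s)\bigl(\sqrt{\bar\eta_s(y)}\,dW_s+dW_s'\sqrt{\bar\eta_s(y)}\bigr),\qquad \bar\eta_0(y)=\Sigma_0 ,
\]
and stays in $\mathcal S_p^+(\mathbb R)$ for all $s$ (by Lemma~\ref{lem2.4}, exactly as in the proof of Lemma~\ref{lemma4.3}; the operative hypothesis is $v\ge p+1$). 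Fix $t>0$ and $Z\in\mathcal S_p^+(\mathbb R)$ and take $y=\varepsilon+t$: for $s\in[0,t]$ the coefficient argument $\varepsilon+t-s$ lies in $[\varepsilon,\varepsilon+t]$, on which (in the range $\varepsilon+t\le1$, where (\ref{eq4.4}) was posed) $f(\rho)=g^2(\rho)=2H\rho^{2\alpha}$; hence, writing $\Lambda_s:=\int_0^s g^2(\varepsilon+t-r)\,dr$, the substitution $\rho=\varepsilon+t-r$ gives $\Lambda_t=\int_\varepsilon^{\varepsilon+t}2H\rho^{2\alpha}\,d\rho=(t+\varepsilon)^{2H}-\varepsilon^{2H}$, the scale factor appearing in (\ref{eq4.33}).

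Next I would run the affine ansatz. For $s\in[0,t]$ put $\Phi(s):=\det(I_p+2(\Lambda_t-\Lambda_s)Z)$ and $\Psi(s):=Z(I_p+2(\Lambda_t-\Lambda_s)Z)^{-1}$ (well defined and $C^1$ in $s$, since $Z\succeq0$ and $\Lambda_t-\Lambda_s\ge0$), and let $N_s:=\Phi(s)^{-v/2}\,\mathrm{etr}(-\Psi(s)\bar\eta_s(\varepsilon+t))$. One has $\Phi(t)=1$, $\Psi(t)=Z$, $\Psi'(s)=2g^2(\varepsilon+t-s)\Psi(s)^2$ and $\Phi'(s)/\Phi(s)=-2g^2(\varepsilon+t-s)\,\mathrm{Tr}\,\Psi(s)$. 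Writing $Q_s:=\mathrm{Tr}(\Psi(s)\bar\eta_s(\varepsilon+t))$, a continuous scalar semimartingale with $ds$-drift $\mathrm{Tr}(\Psi'(s)\bar\eta_s)+v\,g^2(\varepsilon+t-s)\,\mathrm{Tr}\,\Psi(s)$ (using that the drift of $\bar\eta$ is $vf\,I_p=v g^2 I_p$) and quadratic variation $d\langle Q\rangle_s=4g^2(\varepsilon+t-s)\,\mathrm{Tr}(\Psi(s)^2\bar\eta_s)\,ds$, It\^{o}'s formula gives $dN_s=N_s\bigl(-\tfrac{v}{2}\tfrac{\Phi'(s)}{\Phi(s)}\,ds-dQ_s+\tfrac{1}{2}\,d\langle Q\rangle_s\bigr)$, whose $ds$-part is
\[
N_s\Bigl(-\tfrac{v}{2}\,\tfrac{\Phi'(s)}{\Phi(s)}-\mathrm{Tr}\bigl(\Psi'(s)\bar\eta_s\bigr)-v\,g^2(\varepsilon+t-s)\,\mathrm{Tr}\,\Psi(s)+2\,g^2(\varepsilon+t-s)\,\mathrm{Tr}\bigl(\Psi(s)^2\bar\eta_s\bigr)\Bigr)\,ds ,
\]
and this vanishes identically by the two relations for $\Phi,\Psi$; hence $N$ is a continuous local martingale.

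Because $Z\succeq0$ forces $\Psi(s)\succeq0$, one has $\mathrm{etr}(-\Psi(s)\bar\eta_s(\varepsilon+t))\le1$, while $\Phi(s)^{-v/2}$ is deterministic, continuous and bounded on $[0,t]$; thus $N$ is bounded, hence a genuine martingale, and $E[N_t]=N_0$. Since $N_t=\det(I_p)^{-v/2}\mathrm{etr}(-Z\bar\eta_t(\varepsilon+t))=\mathrm{etr}(-Zu_t(\varepsilon))$ and $N_0=\det(I_p+2\Lambda_tZ)^{-v/2}\mathrm{etr}(-Z(I_p+2\Lambda_tZ)^{-1}\Sigma_0)$, this yields $E[\mathrm{etr}(-Zu_t(\varepsilon))]=\det(I_p+2\Lambda_tZ)^{-v/2}\mathrm{etr}(-Z(I_p+2\Lambda_tZ)^{-1}\Sigma_0)$, and substituting $\Lambda_t=(t+\varepsilon)^{2H}-\varepsilon^{2H}$ gives (\ref{eq4.33}); as a check, when $v=n$ is a positive integer this reduces to (\ref{eq4.3}). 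Equivalently, one could time-change $s\mapsto u=\Lambda_s$ to turn $\bar\eta$ into the standard Wishart process of \cite{Bru} of index $v$ and read the formula off its classical Laplace transform.

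I expect the steps requiring most care to be the matrix-Riccati bookkeeping --- verifying the ODEs for $\Phi,\Psi$, the drift and bracket of $Q_s$, and the cancellations, with the correct constants and signs --- and the observation that the closed form $\Lambda_t=(t+\varepsilon)^{2H}-\varepsilon^{2H}$ relies on $\varepsilon+t\le1$, i.e., on the coefficient arguments staying in $[\varepsilon,1]$ where $g(\rho)=\sqrt{2H}\rho^{\alpha}$; I would state that range restriction explicitly. The positive-definiteness of $\bar\eta_s(\varepsilon+t)$ on $[0,t]$, needed both for $\sqrt{\bar\eta_s}$ and for It\^{o}'s formula, is already available from Lemma~\ref{lemma4.3} under $v\ge p+1$, so nothing further is required there.
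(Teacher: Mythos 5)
Your argument is correct, and it reaches (\ref{eq4.33}) by a genuinely different route from the paper. The paper never solves the Riccati system explicitly: it first identifies $u_t(\varepsilon)$ for integer $v=n$ with the explicitly constructed process $\Sigma_t^\varepsilon$ via Theorem \ref{th4.2}, whose Laplace transform (\ref{eq4.3}) is known from the Wishart distribution of $(B_t^{H,\varepsilon})'B_t^{H,\varepsilon}$; it then derives, by a Gourieroux-style infinitesimal computation on conditional expectations, the PDE system (\ref{eq4.35}) for the exponents $b,B$ of the conditional Laplace transform of $\bar\eta_{s+h}(y)$, reads off $b(t,0,\varepsilon+t)$ and $B(t,0,\varepsilon+t)$ by matching with (\ref{eq4.3}), and finally handles non-integer $v$ by the scaling observation $b^*=\tfrac{v}{n}b$, $B^*=B$ in (\ref{eq4.4+3}). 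You instead fix $t$, work directly with the single autonomous SDE for $\bar\eta_s(\varepsilon+t)$, solve the matrix Riccati equations in closed form through $\Phi,\Psi$ and $\Lambda_s$, and verify that $N_s=\Phi(s)^{-v/2}\mathrm{etr}(-\Psi(s)\bar\eta_s(\varepsilon+t))$ is a bounded martingale, so $E[N_t]=N_0$ gives the formula for every real $v\ge p+1$ at once. I checked the bookkeeping ($\Psi'=2g^2\Psi^2$, $\Phi'/\Phi=-2g^2\mathrm{Tr}\,\Psi$, the drift and bracket of $Q_s$, the cancellation, and $\Lambda_t=(t+\varepsilon)^{2H}-\varepsilon^{2H}$); it is right. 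What your approach buys: it is self-contained and uniform in $v$ (no anchoring to the integer case, no scaling trick), it replaces the paper's somewhat formal $ds$-manipulations (\ref{eq4.36})--(\ref{eq4.39}) with a rigorous bounded-martingale argument, and it makes the time-change to Bru's Wishart process transparent. What the paper's approach buys: it produces the full conditional Laplace transform $\Psi_{s,h}(Z)$ at all intermediate times, which is reused afterwards (e.g.\ in the discussion of the non-Markov/non-affine property via (\ref{eqx})), whereas your martingale gives only the time-$0$ transform unless you restart it at time $s$. Finally, the restriction $t+\varepsilon\le 1$ that you flag (needed so that the coefficient argument $\varepsilon+t-s$ stays where $g(\rho)=\sqrt{2H}\rho^{\alpha}$) is not a defect of your proof relative to the paper: the paper's own identification of $u_t(\varepsilon)$ with $\Sigma_t^\varepsilon$ carries the same implicit restriction, and both proofs extend to $t+\varepsilon\le a$ by choosing the plateau of $f,g$ on $[\varepsilon,a]$ as in Remark \ref{remark4+}; stating this explicitly, as you do, is an improvement.
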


\begin{proof}
By Theorem \ref{th4.2}, if $v$ is an integer and $v=n$ ($n\geq p+1$) ,  then the Laplace transform of $u_t(\varepsilon)$ in (\ref{eq4.10}) is given by (\ref{eq4.3}).
Let
$$
\mathcal{F}_s=\sigma\{W_{l}: 0\leq l\leq s\},\quad {\Psi _{s,h}}(Z) = E[\mathrm{etr}(-Z{\bar \eta _{s + h}}(y))|\mathcal{F}_s].
$$
By a similar method in \cite{Gourieroux06}, for any $s$, $y \in \mathbb{R}$ and $Z\in \mathcal{S}^+_p(\mathbb{R})$, we can calculate ${\Psi _{s,h}}(Z)$ as follows:
\begin{equation}\label{eq4.34}
{\Psi _{s,h}}(Z) = \exp (b(h,s,y))\mathrm{etr}(B(h,s,y){{\bar \eta }_s}(y)),
\end{equation}
where $b(h,s,y)\in \mathbb{R}$ and $B(h,s,y)\in S_p(\mathbb{R})$ satisfy the following system of PDEs:
\begin{equation}\label{eq4.35}
\left\{ {\begin{array}{*{20}{l}}
\partial_h b(h,s,y) - \partial_s b(h,s,y) = n\mathrm{Tr}[f(y - s)B(h,s,y)],\\
\partial_h B(h,s,y) - \partial_s B(h,s,y) = 2{g^2}(y - s){B^2}(h,s,y)
\end{array}} \right.
\end{equation}
with the boundary conditions $b(0,s,y)=0$ and $B(0,s,y)=-Z$.

Indeed, on the one hand, we have
\begin{equation}\label{eq4.36}
{\Psi _{s,h + ds}}(Z) = \exp \{\mathrm{Tr}[B(h + ds,s,y){\overline \eta  _s}(y)] + b(h + ds,s,y)\}.
\end{equation}
On the other hand, by law of iterated expectation, we get
\begin{eqnarray}\label{eq4.37}
&&{\Psi _{s,h + ds}}(Z)\nonumber\\
& =& E[\exp \{  - \mathrm{Tr}[Z{\overline \eta  _{s + h + ds}}(y)]\} |{\mathcal{F}_s}]\nonumber\\
& =& E[{\Psi _{s + ds,h}}(Z)|{\mathcal{F}_s}]\nonumber\\
& =& E[\exp \{ \mathrm{Tr}[B(h,s + ds,y){\overline \eta  _{s + ds}}(y)] + b(h,s + ds,y)\} |{\mathcal{F}_s}]\nonumber\\
 &=& \exp \{ b(h,s + ds,y) + \mathrm{Tr}[B(h,s + ds,y){\overline \eta  _s}(y)\} E[\mathrm{etr}\{ B(h,s + ds,y)d{\overline \eta  _s}(y)\} |{\mathcal{F}_s}]
\end{eqnarray}
and
\begin{eqnarray}\label{eq4.38}
&&E[\mathrm{etr}\{ B(h,s + ds,y)d{\overline \eta  _s}(y)\} |{\mathcal{F}_s}]\nonumber\\
 &=& \mathrm{etr}\{ nf(y - s)B(h,s + ds,y)ds\} E\left[\mathrm{etr}\{ 2g(y - s)B(h,s + ds,y)\sqrt {{{\overline \eta  }_s}(y)} d{W_s}\} |{\mathcal{F}_s}\right]\nonumber\\
 &=& \mathrm{etr}\{ nf(y - s)B(h,s + ds,y)ds + 2{g^2}(y - s){B^2}(h,s + ds,y){\overline \eta  _s}(y)ds\}.
\end{eqnarray}
By using (\ref{eq4.36})-(\ref{eq4.38}), we have
\begin{equation}\label{eq4.39}
\left\{
\begin{array}{c}
\frac{{b(h + ds,s,y) - b(h,s + ds,y)}}{{ds}} = \mathrm{Tr}[nf(y - s)B(h,s + ds,y)],\\
\frac{{B(h + ds,s,y) - B(h,s + ds,y)}}{{ds}} = 2{g^2}(y - s){B^2}(h,s + ds,y).
\end{array}
\right.
\end{equation}
Letting $ds\rightarrow 0$ in (\ref{eq4.39}), we obtain (\ref{eq4.35}).

Notice that (\ref{eq4.35}) has a unique solution because (\ref{eq4.12}) has a unique solution such that $\bar \eta _{t}(x)$ has a unique conditional Laplace transform. For any $s\geq 0$ and $h\geq 0$, we have
\begin{eqnarray}\label{eqx}
&&E[\mathrm{etr}( - Z{u_{s + h}}(\varepsilon ))|{\mathcal{F}_s}]\nonumber\\
 &=& E[\mathrm{etr}( - Z{{\bar \eta }_{s + h}}(s + h + \varepsilon ))|{\mathcal{F}_s}]\nonumber\\
 &=&{\Psi _{s,h}}(Z){|_{y = s + h + \varepsilon }}\nonumber\\
 &=& \exp (b(h,s,s + h + \varepsilon ))\mathrm{etr}(B(h,s,s + h + \varepsilon ){{\bar \eta }_s}(s + h + \varepsilon ))\nonumber\\
 &=& \exp (b(h,s,s + h + \varepsilon ))\mathrm{etr}(B(h,s,s + h + \varepsilon ){u_s}(h + \varepsilon )).
\end{eqnarray}
Let $s=0$ and $h=t$ in (\ref{eqx}), we get
\begin{equation}\label{eq4.40}
E[\mathrm{etr}( - Z{u_t}(\varepsilon))] = \exp (b(t,0,\varepsilon + t))\mathrm{etr}(B(t,0,\varepsilon + t){\Sigma _0}).
\end{equation}
Comparing (\ref{eq4.40}) with (\ref{eq4.3}), we have
\begin{equation}\label{eq4.41}
\left\{ {\begin{array}{*{20}{l}}
{b(t,0,\varepsilon + t) =  - \frac{n}{2}\ln [\det ({I_p} + 2({{(t + \varepsilon )}^{2H}} - {\varepsilon ^{2H}})Z)],}\\
{B(t,0,\varepsilon + t) =  - Z{{\left({I_p} + 2({{(t + \varepsilon )}^{2H}} - {\varepsilon ^{2H}})Z\right)}^{ - 1}}.}
\end{array}} \right.
\end{equation}

Now let $v\geq p+1$ be any real number. In a similar way, for any $s$, $y \in \mathbb{R}$ and $Z\in \mathcal{S}^+_p(\mathbb{R})$, we can obtain that the conditional Laplace transform of  ${\bar \eta _{s + h}}(y)$
$$
{\Psi^* _{s,h}}(Z) = E[\mathrm{etr}(-Z{\bar \eta _{s + h}}(y))|\mathcal{F}_s].
$$
is given by
\begin{equation}\label{eq4.4+}
{\Psi _{s,h}^*}(Z) = \exp (b^*(h,s,y))\mathrm{etr}(B^*(h,s,y){{\bar \eta }_s}(y))
\end{equation}
and we have that
\begin{equation}\label{eq4.4+4}
E(\mathrm{etr}[ - Z{u_{s + h}}(\varepsilon )]|{\mathcal{F}_s})=\exp (b^*(h,s,s + h + \varepsilon ))\mathrm{etr}[B^*(h,s,s + h + \varepsilon ){u_s}(h + \varepsilon )],
\end{equation}
where $b^*(h,s,y)\in \mathbb{R}$, $Z\in \mathcal{S}^+_p(\mathbb{R})$ and $B^*(h,s,y)\in S_p(\mathbb{R})$ satisfy the following system of PDEs:
\begin{equation}\label{eq4.4++}
\left\{ {\begin{array}{*{20}{c}}
\partial_h b^*(h,s,y) - \partial_s b^*(h,s,y) = v\mathrm{Tr}[f(y - s)B^*(h,s,y)],\\
\partial_h B^*(h,s,y) - \partial_s B^*(h,s,y) = 2{g^2}(y - s){(B^*)^2}(h,s,y)
\end{array}} \right.
\end{equation}
with the boundary conditions $b^*(0,s,y)=0$ and $B^*(0,s,y)=-Z$. Let
\begin{equation}\label{eq4.4+3}
b^*(h,s,y)=\frac{v}{n}b(h,s,y),\quad B^*(h,s,y)=B(h,s,y).
\end{equation}
It is easy to know, by comparing (\ref{eq4.4++}) with (\ref{eq4.35}), that $b^*(h,s,y)$ and $B^*(h,s,y)$ given by (\ref{eq4.4+3}) is the solution of (\ref{eq4.4++}). Now letting $s=0$ and $h=t$ in (\ref{eq4.4+}), for any real number $v\geq p+1$, it follows from (\ref{eq4.4+}), (\ref{eq4.4+3}) and (\ref{eq4.41}) that the Laplace transform of ${u_t}(\varepsilon)$ must be (\ref{eq4.33}).   \hfill $\blacksquare$
\end{proof}

\begin{remark}
We note that, if $v\geq p+1$,  then Theorem \ref{th4.4} shows that the Laplace transform of $\varepsilon$-$fWIS(H,v,p,\Sigma_0)$ converges to the Laplace transform of $fWIS^*(H,v,p,\Sigma_0)$ when $\varepsilon$ tends to $0$, which implies that $\varepsilon$-$fWIS(H,v,p,\Sigma_0)$ converges to $fWIS^*(H,v,p,\Sigma_0)$ in distribution when $\varepsilon$ tends to $0$. Consequently, we can still regard an $\varepsilon$-fractional Wishart process as an approximation of a fractional Wishart process in distribution.
\end{remark}

\begin{remark}
Comparing the $\varepsilon$-fWIS process defined by Definition \ref{def4.2} with the Wishart process in \cite{Bru}, there is an essential difference.  The  Wishart process $u_t$ has the affine property which means $u_t$ is an affine process (\cite{Gourieroux}) such that
$$
E(\mathrm{etr}[ - Z{u_{s + h}}]|{\mathcal{F}_s}) = \exp (b^*(h))\mathrm{etr}[B^*(h){u_s}].
$$
However, when $H\neq \frac{1}{2}$, it follows from (\ref{eq4.4+4}) that the $\varepsilon$-fWIS process can hardly possess such affine property. Moreover, by (\ref{eqx}) and law of iterated expectation, it is not hard to check that
$$
E[\mathrm{etr}( - Z{u_{s + h}}(\varepsilon ))|{u_s}(h + \varepsilon )] = E\left[E[\mathrm{etr}( - Z{u_{s + h}}(\varepsilon ))|{\mathcal{F}_s}]|{u_s}{(h + \varepsilon )}\right] = E[\mathrm{etr}( - Z{u_{s + h}}(\varepsilon ))|{\mathcal{F}_s}].
$$
If we take $Z=I_p$, then by the definition of the Markov process (see, for example, Definition 17.2.1 in \cite{Cohen}), we know that for each $\varepsilon>0$, if $H\neq \frac{1}{2}$, then $u_t(\varepsilon)$ ($t\in [0,T]$) is not a Markov process, which is contrary to the case of the Wishart process.
\end{remark}

\section{Generalization: six-parameter $\varepsilon$-fWIS}\label{section5}\noindent
\setcounter{equation}{0}
Let $\Omega$, $Q$ and $K$ be matrices in $\mathcal{M}_{p,p}(\mathbb{R})$. Now we introduce the following SPDE:
\begin{eqnarray}\label{eq.5.1}
d{u_t}(x) &=& \partial {u_t}(x)dt + f(x)[\Omega\Omega' + {u_t}(x)K + K'{u_t}(x)]dt\nonumber\\
 &&\mbox{}  + g(x)\left[\sqrt {{u_t}(x)} d{W_t}Q + Q'd{W_t}'\sqrt {{u_t}(x)} \right], \quad {u_0}(x) = {\Sigma _0} > 0,
\end{eqnarray}
where $f(x)$ and $g(x)$ are two functions appeared in (\ref{eq4.10}) and $W_t$ is a Brownian matrix in $\mathcal{M}_{p,p}(\mathbb{R})$. The stochastic characteristic equation associated with (\ref{eq.5.1}) is given by
\begin{equation}\label{eq.5.2}
\left\{\begin{array}{l}
d{\xi _t} =  - dt,\; {\xi _0} = x,\; a.s.\\
d{\eta _t} = f({\xi _t})\left[\Omega\Omega' + {\eta _t}K + K'{\eta _t}\right]dt + g({\xi _t})\left[\sqrt {{\eta _t}} d{W_t}Q + Q'd{W_t}'\sqrt {{\eta _t}} \right],\; {\eta _0} = {\Sigma _0} > 0, \; a.s.
\end{array}\right.
\end{equation}

Similar to Lemma \ref{lemma4.3}, we have the following result for (\ref{eq.5.2}).

\begin{lemma}
If $\Omega\Omega'-(p+1)Q'Q\in \overline{\mathcal{S}_p^+}(\mathbb{R})$, then (\ref{eq.5.2}) has a unique adapted continuous strong solution $(\xi _t,\eta _t)_{t\in \mathbb{R}_+}$ on $\mathbb{R}\times\mathcal{S}_{p}^+(\mathbb{R})$ for each initial condition $(x,\Sigma _0)\in \mathbb{R}\times\mathcal{S}_{p}^+(\mathbb{R})$. In particular, the stopping time $T(x,\Sigma _0)=\inf\{t\geq 0:\eta _t \notin \mathcal{S}_{p}^+(\mathbb{R})\}=+\infty,a.s.$.
\end{lemma}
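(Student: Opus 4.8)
The plan is to follow the argument of Lemma~\ref{lemma4.3} and reduce the statement to an application of Lemma~\ref{lem2.4}. The first equation of (\ref{eq.5.2}) has the unique adapted continuous strong solution $\xi_t(x)=x-t$, so everything comes down to the second equation, which I would put into the form (\ref{lem2.4_eq2}) by choosing, for $X\in\mathcal{S}_p^+(\mathbb{R})$,
\[
F(t,X)=g(x-t)\sqrt{X},\qquad G(t,X)=Q,\qquad J(t,X)=f(x-t)\left[\Omega\Omega'+XK+K'X\right].
\]
Since $\sqrt{X}$ is symmetric, this gives $ff(t,X):=F(t,X)F(t,X)'=g^2(x-t)X$ and $gg(t,X):=G(t,X)'G(t,X)=Q'Q$; here, as already done in Lemma~\ref{lemma4.3}, I write $ff$ and $gg$ to avoid a clash with the coefficient functions $f(x),g(x)$ of (\ref{eq4.10}).

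Next I would verify the structural hypotheses of Lemma~\ref{lem2.4}. This is essentially a copy of the corresponding part of the proof of Lemma~\ref{lemma4.3}: using $|A\otimes B|=|A||B|$, the global bound $\sup_{x\in\mathbb{R}}\{|f(x)|,|g(x)|\}\le M$, and the facts that $X\mapsto\sqrt{X}$ is locally Lipschitz and locally bounded on $\mathcal{S}_p^+(\mathbb{R})$, one gets that $G'\otimes F(t,X)=Q'\otimes\bigl(g(x-t)\sqrt{X}\bigr)$ is locally Lipschitz and of linear growth. Because $J(t,X)$ is affine in $X$ with bounded coefficients ($\Omega$ and $K$ fixed, $f$ bounded), it is locally Lipschitz and of linear growth as well.

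The substantive step is the drift condition (\ref{lem2.4_eq1}). Using $ff(t,X)=g^2(x-t)X$, $gg(t,X)=Q'Q$ and $\mathrm{Tr}[XKX^{-1}]=\mathrm{Tr}[K'XX^{-1}]=\mathrm{Tr}[K]$, a direct computation yields
\begin{align*}
&\mathrm{Tr}[J(t,X)X^{-1}]-\mathrm{Tr}[ff(t,X)X^{-1}]\,\mathrm{Tr}[gg(t,X)X^{-1}]-\mathrm{Tr}[ff(t,X)X^{-1}gg(t,X)X^{-1}]\\
&\qquad=f(x-t)\,\mathrm{Tr}[\Omega\Omega'X^{-1}]+2f(x-t)\,\mathrm{Tr}[K]-(p+1)\,g^2(x-t)\,\mathrm{Tr}[Q'QX^{-1}].
\end{align*}
Now the identity $f(x)=g^2(x)$ lets me rewrite the right-hand side as $g^2(x-t)\,\mathrm{Tr}\bigl[(\Omega\Omega'-(p+1)Q'Q)X^{-1}\bigr]+2g^2(x-t)\,\mathrm{Tr}[K]$. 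Since $\Omega\Omega'-(p+1)Q'Q\in\overline{\mathcal{S}_p^+}(\mathbb{R})$ and $X^{-1}\in\mathcal{S}_p^+(\mathbb{R})$, the trace of the product of the two is nonnegative, and $0\le g^2(x-t)\le M^2$, so the whole expression is bounded below by $-2M^2|\mathrm{Tr}[K]|$. Hence (\ref{lem2.4_eq1}) holds with the constant, and in particular locally integrable, function $c(t)\equiv-2M^2|\mathrm{Tr}[K]|$. Lemma~\ref{lem2.4} then gives the existence and uniqueness of the adapted continuous strong solution $(\xi_t,\eta_t)$ on $\mathbb{R}\times\mathcal{S}_p^+(\mathbb{R})$ together with $T(x,\Sigma_0)=+\infty$ a.s.

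I expect the only nontrivial part to be the trace bookkeeping above: collapsing $XK+K'X$ to $2\,\mathrm{Tr}[K]$ under $X^{-1}$, and then using $f=g^2$ to repackage the remaining terms so that it is precisely the assumption $\Omega\Omega'-(p+1)Q'Q\in\overline{\mathcal{S}_p^+}(\mathbb{R})$ that makes the bound work. The a priori troubling sign of $\mathrm{Tr}[K]$ is harmless, since $g$ and hence $g^2$ are globally bounded.
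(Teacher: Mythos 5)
Your proposal is correct and follows essentially the same route as the paper: reduce to Lemma \ref{lem2.4}, verify the locally Lipschitz/linear growth hypotheses as in Lemma \ref{lemma4.3}, and check the drift condition (\ref{lem2.4_eq1}) by the trace computation that isolates $\mathrm{Tr}[(\Omega\Omega'-(p+1)Q'Q)X^{-1}]\ge 0$ plus the $2\mathrm{Tr}[K]$ term. The only cosmetic differences are that the paper places the factor $g(x-t)$ in $G$ rather than in $F$ (which is equivalent) and takes $c(t)=2\mathrm{Tr}[f(x-t)K]$ instead of your constant lower bound $-2M^{2}|\mathrm{Tr}[K]|$; both choices are locally integrable, so the argument is the same.
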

\begin{proof}
Let $J(t,X)=f(x-t)\left[\Omega\Omega' + XK + K'X\right]$, $G(t,X)=g(x-t)Q$, $F(t,X)=\sqrt{X}$, $ff(t,X)=F(t,X)F(t,X)'$ and $gg(t,X)=G(t,X)' G(t,X)$  for any $X\in\mathcal{S}_{p}^+(\mathbb{R})$. Then we have $ff(t,X)=X$ and $gg(t,X)=f(x-t)Q'Q$ for any $X\in\mathcal{S}_{p}^+(\mathbb{R})$.

As we have showed in Lemma \ref{lemma4.3}, $G'\otimes F(t,Y)$ is locally Lipschitz and of linear growth. It is not hard to show that $J(t,X)=f(x-t)\left[\Omega\Omega' + XK + K'X\right]$ is locally Lipschitz and of linear growth.

Now, by $\Omega\Omega'-(p+1)Q'Q\in \overline{\mathcal{S}_p^+}(\mathbb{R})$, we have
\begin{align}
&\mathrm{Tr}[J(t,X) X^{-1}]-\mathrm{Tr}[ff(t,X) X^{-1}]\mathrm{Tr}[ gg(t,X) X^{-1}]\nonumber\\
&\mbox{}-\mathrm{Tr}[ff(t,X) X^{-1}gg(t,X) X^{-1}]\nonumber\\
=&\mathrm{Tr}[f(t-x)[\Omega\Omega'-(p+1)Q'Q]X^{-1}]+2\mathrm{Tr}[f(x-t)K]\nonumber\\
\geq &2\mathrm{Tr}[f(x-t)K].
\end{align}
Let $c(t)=2\mathrm{Tr}[f(x-t)K]$. Then by (\ref{eq4.15}), we have, for each $a\in \mathbb{R}_+$,
$$\int_0^a {|c(s)|ds}   <  + \infty. $$
Making use of Lemma \ref{lemma4.3}, the result is obvious. \hfill $\blacksquare$
\end{proof}

\begin{theorem}\label{th5.1}
Assume that $T$ is any positive number. If $\Omega\Omega'-(p+1)Q'Q\in \overline{\mathcal{S}_p^+}(\mathbb{R})$, then SPDE (\ref{eq.5.1}) with the initial condition $u(0,x)=\Sigma_0\in \mathcal{S}_{p}^+( \mathbb{R})$ a.s. has a solution in $\mathcal{S}_{p}^+( \mathbb{R})$ on $t\in [0,T]$. Furthermore, if we assume the solution of SPDE (\ref{eq.5.1}) is a semimartingale for each $x\in \mathbb{R}$ and continuous in $(t,x)$ and $3$-times continuously differentiable in $x$ a.s., then the solution exists uniquely.
\end{theorem}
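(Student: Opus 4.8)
The plan is to follow the stochastic-characteristics argument already used for Theorem~\ref{th4.3}, adapted to the richer coefficient structure of (\ref{eq.5.1}). First I would invoke the lemma just proved above: under the hypothesis $\Omega\Omega'-(p+1)Q'Q\in\overline{\mathcal{S}_p^+}(\mathbb{R})$, the characteristic system (\ref{eq.5.2}) has a unique adapted continuous strong solution $(\xi_t,\eta_t)_{t\in\mathbb{R}_+}$ on $\mathbb{R}\times\mathcal{S}_p^+(\mathbb{R})$ which never leaves $\mathcal{S}_p^+(\mathbb{R})$. Since the first equation gives $\xi_t(x)=x-t$, the inverse characteristic is $\xi_t^{-1}(x)=x+t$; writing $\bar\eta_t(x)=\eta_t(x,\Sigma_0)$, I would set $u_t(x)=\bar\eta_t(\xi_t^{-1}(x))=\bar\eta_t(x+t)$ and claim this is the desired solution.

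The substantive step is a regularity lemma, the analogue of Lemma~\ref{lemma4.5}: for each $t\in[0,T]$ the map $x\mapsto\eta_t(x)$ is a $C^3$-map from $\mathbb{R}$ into $\mathcal{M}_{p,p}(\mathbb{R})$ a.s. I would localize exactly as in Lemma~\ref{lemma4.5}, exhausting $\mathcal{S}_p^+(\mathbb{R})$ by compact convex sets $V_N$ and $\mathbb{R}$ by bounded intervals $I_N$, replacing $\sqrt{\cdot}$ and $g$ by globally Lipschitz, bounded extensions (with bounded derivatives up to order $4$) that agree with the originals on $V_N$ resp. $I_N$ via the construction in Remark~\ref{extension}. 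The point is that the extra drift $f(\xi)[\Omega\Omega'+\eta K+K'\eta]$ is affine, hence globally Lipschitz once $\eta$ is localized, and the factor $Q$ is a fixed matrix, so the resulting auxiliary SDE still has a unique solution that is in $L^q$ for every $q\geq1$. Then, using Lemma~\ref{MMI} together with Remark~\ref{remark2.1} and Gronwall's inequality, one obtains $E|\eta_u^N(z)-\eta_u^N(y)|^q\le C(q)|z-y|^q$, $E|\eta_u^N(x)|^q\le C(q)$, and — after formally differentiating the localized SDE in the parameter $x$ and bounding the difference quotients $R_u^\eta(z,y)=\frac{1}{y}(\eta_u^N(z+y)-\eta_u^N(y))$ together with their higher-order analogues — the Hölder estimates required to apply Kolmogorov's theorem (Lemma~\ref{lemma2.2}) three times. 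This yields a modification of $\eta_t^N(\cdot)$ that is $C^3$ and commutes with the stochastic and Lebesgue integrals, and patching over $N$ (as in Remark~\ref{remark4.4+}) gives the claim on $\eta_t(x)$.

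With the regularity lemma in hand, I would apply the generalized It\^o formula (Lemma~\ref{ito}) to $u_t(x)=\bar\eta_t(x+t)$. Because $d\xi_t^{-1}(x)=dt$ and $\langle\xi^{-1}(x)\rangle_t=0$, all quadratic-covariation corrections vanish and the only new contribution is $\partial\bar\eta_t(x+t)\,dt=\partial_x u_t(x)\,dt$; substituting the second equation of (\ref{eq.5.2}) evaluated at $y=x+t$ then reproduces exactly SPDE (\ref{eq.5.1}) with $u_0(x)=\Sigma_0>0$, and the regularity lemma shows $u_t(x)$ is a semimartingale in $t$ for each $x$, continuous in $(t,x)$ and $3$-times continuously differentiable in $x$, and stays in $\mathcal{S}_p^+(\mathbb{R})$ by the non-explosion conclusion of the characteristic lemma. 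For uniqueness, given another solution $\hat u_t(x)$ with the stated regularity, I would apply Lemma~\ref{ito} to $\hat u_t(x-t)$: since $d(x-t)=-dt$, the transport term cancels the $\partial_x$ term and $\hat u_t(x-t)$ is seen to solve (\ref{eq.5.2}) with $\hat u_0(x)=\Sigma_0$, so uniqueness for (\ref{eq.5.2}) forces $\hat u_t(x-t)=\eta_t(x,\Sigma_0)=\bar\eta_t(x)$, i.e. $\hat u_t(x)=\bar\eta_t(x+t)=u_t(x)$.

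The main obstacle is the $C^3$-regularity lemma: proving, in the presence of the two extra linear drift terms and the matrix $Q$, that the flow $x\mapsto\eta_t(x)$ and its first three $x$-derivatives satisfy the moment and H\"older bounds demanded by Kolmogorov's theorem forces one to carry through the entire extension-and-Gronwall machinery of Lemma~\ref{lemma4.5} again, now with the additional bookkeeping produced by differentiating $f(\xi)[\Omega\Omega'+\eta K+K'\eta]$. Everything afterward — the It\^o-formula verification that $u_t(x)$ solves (\ref{eq.5.1}), and the uniqueness argument — is essentially formal and parallels the proof of Theorem~\ref{th4.3}.
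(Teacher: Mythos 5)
Your proposal is correct and follows exactly the route the paper intends: the paper omits the proof of Theorem \ref{th5.1}, stating only that it is similar to Lemma \ref{lemma4.5} and Theorem \ref{th4.3}, and your argument (characteristic system (\ref{eq.5.2}) via the preceding lemma, a $C^3$-regularity lemma by localization, extension, Gronwall and Kolmogorov, then the generalized It\^{o} formula for $u_t(x)=\bar\eta_t(x+t)$ and uniqueness by transporting a second solution back to (\ref{eq.5.2})) is precisely that omitted proof carried out, with the correct observation that the extra affine drift and constant factor $Q$ cause no new difficulties after localization.
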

\begin{proof}
The proof of theorem is similar to the ones of Lemma \ref{lemma4.5} and Theorem \ref{th4.3} and so we omit it here.
\hfill $\blacksquare$
\end{proof}

Now we can define the six-parameter $\varepsilon$-fractional Wishart process.
\begin{definition}\label{6w}
Assume that $\Omega,Q,K$ are matrices in $\mathcal{M}_{p,p}(\mathbb{R})$. An $\varepsilon$-fractional Wishart process with Hurst index $H$, dimension $p$, initial state $\Sigma_0\in \mathcal{S}_p^+(\mathbb{R})$ a.s. and parameters $\Omega,Q,K$, written as $\varepsilon$-$fWIS(H,p,\Sigma_0,\Omega,Q,K)$,  is defined by the solution $u_t(\varepsilon)$ of (\ref{eq.5.1}) in $\mathcal{S}_p^+(\mathbb{R})$.
\end{definition}

Obviously, $\Omega\Omega'=vQ'Q$ is a special case of Definition \ref{6w} and the parameter $v$ is kept. So we denote such case by $\varepsilon$-$fWIS(H,v,p,\Sigma_0,Q,K)$. Moreover, if $v\geq p+1$, then the condition in Theorem \ref{th5.1} holds. In this case, it is seen that if $H=\frac{1}{2}$, then (\ref{eq.5.1}) with $x\in[\varepsilon,1]$ degenerates to the SDE which governs the five-parameter Wishart process in \cite{Bru}. Hence, $\varepsilon$-$fWIS(\frac{1}{2},v,p,\Sigma_0,Q,K)$ coincides with the five-parameter Wishart process in sense of \cite{Bru}.

Next, we roughly discuss serial correlation properties of the $\varepsilon$-fractional Wishart process.

Let $t>0$ and $\Delta t>0$ be two real numbers and define the following maps:
\begin{eqnarray*}
F(s,x,{{\bar \eta }_s}(x))& = &f(x - s)[\Omega\Omega' + {{\bar \eta }_s}(x)K + K'{{\bar \eta }_s}(x)],\\
G(s,x,{{\bar \eta }_s}(x)) &=& g(x - s)\sqrt {{{\bar \eta }_s}(x)},
\end{eqnarray*}
where ${{\bar \eta }_s}(x)=\eta _s(x,\Sigma_0)$ is a matrix appeared in (\ref{eq.5.2}). Then $u_s(x)={{\bar \eta }_s}(s+x)$ and  the increment of $\varepsilon$-$fWIS(H,p,\Sigma_0,\Omega,Q,K)$ can be calculated as follows:
\begin{eqnarray}\label{eq.5.6}
\Delta {u_t}(\varepsilon ) &=& {u_{t + \Delta t}}(\varepsilon ) - {u_t}(\varepsilon )\nonumber\\
 &=& {{\bar \eta }_{t + \Delta t}}(t + \Delta t + \varepsilon ) - {{\bar \eta }_t}(t + \varepsilon )\nonumber\\
 &=& \int_0^t {[F(s,t + \Delta t + \varepsilon ,{{\bar \eta }_s}(t + \Delta t + \varepsilon )) - F(s,t + \varepsilon ,{{\bar \eta }_s}(t + \varepsilon ))]ds} \nonumber\\
 &&\mbox{}+ \int_0^t {[G(s,t + \Delta t + \varepsilon ,{{\bar \eta }_s}(t + \Delta t + \varepsilon )) - G(s,t + \varepsilon ,{{\bar \eta }_s}(t + \varepsilon ))]d{W_s}Q} \nonumber\\
 &&\mbox{}+ \int_0^t {Q'd{W_s}'[G(s,t + \Delta t + \varepsilon ,{{\bar \eta }_s}(t + \Delta t + \varepsilon )) - G(s,t + \varepsilon ,{{\bar \eta }_s}(t + \varepsilon ))]} \nonumber\\
 &&\mbox{}+ \int_t^{t + \Delta t} {F(s,t + \Delta t + \varepsilon ,{{\bar \eta }_s}(t + \Delta t + \varepsilon ))ds} \nonumber\\
 &&\mbox{}+ \int_t^{t + \Delta t} {G(s,t + \Delta t + \varepsilon ,{{\bar \eta }_s}(t + \Delta t + \varepsilon ))d{W_s}Q} \nonumber\\
 &&\mbox{}+ \int_t^{t + \Delta t} {Q'd{W_s}'G(s,t + \Delta t + \varepsilon ,{{\bar \eta }_s}(t + \Delta t + \varepsilon ))}.
\end{eqnarray}
In the case $H\neq \frac{1}{2}$, it is easy to see that
$$G(s,t + \Delta t + \varepsilon ,{{\bar \eta }_s}(t + \Delta t + \varepsilon )) - G(s,t + \varepsilon ,{{\bar \eta }_s}(t + \varepsilon ))\neq 0$$
and so the increment $\Delta {u_t}(\varepsilon )$ depends on the past information $W_s\ (0\leq s \leq t)$, which depicts the serial correlation of $\varepsilon$-$fWIS$ process, while in the case $H=\frac{1}{2}$, we have
$$G(s,t + \Delta t + \varepsilon ,{{\bar \eta }_s}(t + \Delta t + \varepsilon )) - G(s,t + \varepsilon ,{{\bar \eta }_s}(t + \varepsilon ))= 0$$
and
$$F(s,t + \Delta t + \varepsilon ,{{\bar \eta }_s}(t + \Delta t + \varepsilon )) - F(s,t + \varepsilon ,{{\bar \eta }_s}(t + \varepsilon ))= 0$$
for any $s\in[0,t]$ (because in the case $H=\frac{1}{2}$, ${{u}_s}(x)$ ($x\in[\varepsilon,a]$) does not depend on $x$ and one can choose $a$ in Remark \ref{remark4+} such that both $t + \Delta t + \varepsilon-s$ and $t + \varepsilon-s$ in the interval $[\varepsilon,a]$ for any $s\in[0,t]$), which implies the increment $\Delta {u_t}(\varepsilon )$ has no memory of the past.

\section{Applications to stochastic volatility}\label{section6}\noindent
\setcounter{equation}{0}
In this section, we shall apply the $\varepsilon$-fractional Wishart process to the stochastic volatility in a finite horizon $T<+\infty$. Let $(\mathbf{\Omega},\mathcal{F},(\mathcal{F}_t)_{t\geq 0}, \mathbb{P})$ be a filtered probability space satisfying the usual conditions.

Similar to the Wishart Affine Stochastic Correlation (WASC) model (\cite{Fonseca}), we assume that a $p$-dimensional risky asset $S_t$ whose dynamics are given by
\begin{eqnarray}\label{model}
\left\{
\begin{array}{rll}
d{S_t} &=& S_t^*\left(\mu dt + \sqrt {{u_t}(x)} d{B_t}\right);\\
d{u_t}(x) &=& \partial {u_t}(x)dt + f(x)\left[\Omega\Omega' + {u_t}(x)K + K'{u_t}(x)\right]dt\\
\mbox{} & \mbox{} & \mbox{}+ g(x)\left[\sqrt {{u_t}(x)} d{W_t}Q + Q'd{W_t}'\sqrt {{u_t}(x)} \right],
\end{array}
\right.
\end{eqnarray}
where $\mu \in \mathcal{M}_{p,1}(\mathbb{R})$ is the appreciation rate of the asset and $\Omega$, $Q$, $K$ are matrices in $\mathcal{M}_{p,p}(\mathbb{R})$ with $Q$ invertible and $\Omega\Omega'-(p+1)Q'Q\in \overline{\mathcal{S}_p^+}(\mathbb{R})$; and $S^*$ is the following matrix in $\mathcal{M}_{p,p}(\mathbb{R})$:
\[\left( {\begin{array}{*{20}{c}}
{{S_1}}&0& \cdots &0\\
0&{{S_2}}& \cdots &0\\
 \vdots & \vdots & \ddots & \vdots \\
0&0& \cdots &{{S_p}}
\end{array}} \right).\]
Here we shall assume that the Brownian matrix $B_t\in \mathcal{M}_{p,1}(\mathbb{R})$ and $W_t\in \mathcal{M}_{p,p}(\mathbb{R})$ are correlated by ${B_t} = {W_t} \rho  + \sqrt {1 - {\rho'} \rho } {H_t}$, where $H_t$ is another $p$-dimensional Brownian motion independent of $W_t$ and $\rho=(\rho_1,\cdots,\rho_p)'$ is a fixed correlation vector between the returns and the state variables with $\rho_1,\cdots,\rho_p\in [-1,1]$. Furthermore, assuming that $u_t(x)$ is a semimartingale for each $x\in \mathbb{R}$ and continuous in $(t,x)$ and $3$-times continuously differentiable in $x$ a.s., we know that $u_t(x)$ exists uniquely by Theorem \ref{th5.1}.

Now define the return process of the asset $Y_t=(\ln S_1(t),\cdots,\ln S_p(t))'$ and by It\^{o}'s lemma, we have
\[d{Y_t} = \left(\mu  - \frac{1}{2}\mathrm{diag}({u_t}(x))\right)dt + \sqrt {{u_t}(x)} d{B_t},\]
where $\mathrm{diag}({u_t}(x))=({u_{11,t}}(x),\cdots,{u_{pp,t}}(x))'$.

Set $x=\varepsilon$. Let $E_t$ denote the expectation under the condition $\mathcal{F}_t$ and similar meaning be given for $\mathrm{Var}_t$ and $\mathrm{Corr}_t$. By directly computing, we have
\[E_t[d{Y_t}]=E[d{Y_t}|{\mathcal{F}_t}] = \left(\mu  - \frac{1}{2}\mathrm{diag}({u_t}(\varepsilon))\right)dt\]
and
\[E_t[d{u_t(\varepsilon)}]=E[d{u_t(\varepsilon)}|{\mathcal{F}_t}] = \partial {u_t}(\varepsilon)dt + f(\varepsilon)[\Omega\Omega' + {u_t}(\varepsilon)K + K'{u_t}(\varepsilon)]dt.\]
Same as the discussion in \cite{Romo15}, we can calculate the conditional covariance matrix of the return process as
\begin{eqnarray}\label{eq6.0.1}
{V_t} &=& \frac{1}{{dt}}\mathrm{Var}_t(d{Y_t}) = \frac{1}{{dt}}E[(d{Y_t} - Ed{Y_t})(d{Y_t} - Ed{Y_t})'|{\mathcal{F}_t}]\nonumber\\
 &=& \frac{1}{{dt}}E\left[\sqrt {{u_t}(\varepsilon )} d{B_t}d{B_t}'\sqrt {{u_t}(\varepsilon )} |{\mathcal{F}_t}\right]\nonumber\\
 &=& {u_t}(\varepsilon )
\end{eqnarray}
and the correlation between the asset $i$ and asset $j$ ($i\neq j$) as
\begin{equation}\label{eq6.0.2}
\mathrm{Corr}_t(dY_{it},dY_{jt})=\frac{u_{ij,t}(\varepsilon )}{\sqrt {u_{ii,t}(\varepsilon )u_{jj,t}(\varepsilon )}}.
\end{equation}

Similarly, we can calculate the correlation between each asset return and its own instantaneous variance $\mathrm{Corr}_t(dY_{it},dV_{ii,t})$ and the correlation between each variance process $\mathrm{Corr}_t(dV_{ii,t},dV_{jj,t})$ as follows:
\begin{equation}\label{eq6.0.3}
\mathrm{Corr}{_t}(d{Y_{it}},d{V_{ii,t}}) = \frac{{{{(Q'\rho )}_i}}}{{\sqrt {{{(Q'Q)}_{ii}}} }},
\end{equation}
\begin{equation}\label{eq6.0.4}
\mathrm{Corr}{_t}(d{V_{ii,t}},d{V_{jj,t}}) = \frac{{{{(Q'Q)}_{ij}}{u_{ij,t}}(\varepsilon )}}{{\sqrt {{{(Q'Q)}_{ii}}{{(Q'Q)}_{jj}}{u_{ii,t}}(\varepsilon ){u_{jj,t}}(\varepsilon )} }}.
\end{equation}

From (\ref{eq6.0.1})-(\ref{eq6.0.4}), it is easy to see that, same as WASC model, $\varepsilon$-fraction Wishart volatility model (\ref{model}) also allows us to introduce stochastic
volatilities of the assets and stochastic correlations not only between the underlying assets¡¯ returns but also between their volatilities. Model (\ref{model}) still keeps the phenomena in Heston's model and WASC model considered in \cite{Romo15}, that is,  the correlation between each asset return and its own instantaneous variance is constant and there exists an implied volatility skew through $\rho$. More importantly, in the case $H\neq \frac{1}{2}$, by (\ref{eq.5.6}), the conditional covariance matrix of the return process given by (\ref{eq6.0.1}) captures the stochastic serial correlation which cannot be depicted by WSAC model.

\begin{remark}\label{heston}
If $n=1$, $H=\frac{1}{2}$, $K<0$ and $x\in[\varepsilon,1]$ in (\ref{model}), then $u_t(x)$ shall degenerate to the following SDE:
\begin{equation}\label{eqremark6.1}
du_t=-2K\left(\frac{vQ^2}{-2K}-u_t\right)dt+2Q\sqrt {u_t}dW_t,
\end{equation}
which is just the Heston's model considered in \cite{Heston}.
\end{remark}

\begin{remark}
Serial correlation among the increments of fWIS and $\varepsilon$-fWIS processes in the case $H\neq \frac{1}{2}$ makes it harder to use risk-neutral pricing method. Taking the Heston's model (see Appendix in \cite{Heston}) as an example, let $S(t)$ and $u(t)$ be the Heston's model in Remark \ref{heston}, and $\vartheta$ is the terminal payoff function at the delivery time. Since with the Markov property we have
\begin{equation}\label{heston2}
E[\vartheta(S(T),u(T))|\mathcal{F}_t]=E[\vartheta(S(T),u(T))|S(t),u(t)],
\end{equation}
there is an essential martingale to obtain the pricing PDE:
\begin{equation}\label{heston3}
V(S,u,t) = E[\vartheta(S(T),u(T))|S(t) = S,u(t) = u],
\end{equation}
when $H\neq\frac{1}{2}$, (\ref{heston2}) does not hold and so we can not use a similar martingale  (\ref{heston3}) to derive the pricing PDE.
\end{remark}

In order to show the influence of serial correlation, we give a simple example. In the risk-neutral probability space $(\mathbf{\Omega},\mathcal{F},(\mathcal{F}_t)_{t\geq 0}, \mathbb{\tilde{P}})$, assume that there exists a one-dimensional risky asset $S_t$ whose dynamics of the volatility is
\begin{equation}\label{forward1}
d{u_t}(\varepsilon) = ({\partial _x}{u_t}(\varepsilon) + vf(\varepsilon){I_p})dt + 2g(\varepsilon)\sqrt {{u_t}(\varepsilon)} d{W_t} ,
\end{equation}
where the functions and parameters are all same like before. In this financial model, we price a forward contract for instantaneous variance with a payoff $\vartheta({u_T}(\varepsilon))={u_T}(\varepsilon)-\iota$, where $T>0$ is the delivery time and $\iota$ is the delivery price. Such contract is also consider in \cite{Stojanovic10} for the utility-based pricing.

By the risk-neutral pricing method, we know that the value at $t=0$ of the forward contract is
\[V(0)=e^{-rT}\tilde{E}[{u_T}(\varepsilon)-\iota |\mathcal{F}_0],\]
where $r$ is the risk-free interest. Making use of (\ref{eq4.33}), we obtain
\begin{align*}
&\tilde{E}[-u_t(\varepsilon )\exp( - Zu_t(\varepsilon ) )]\nonumber\\
=&  -v({(t + \varepsilon )^{2H}} - {\varepsilon ^{2H}}){\left(1 + 2({(t + \varepsilon )^{2H}} - {\varepsilon ^{2H}})Z\right)^{ - \frac{v}{2}-1}}\exp\left( - Z{(1 + 2Z({(t + \varepsilon )^{2H}} - {\varepsilon ^{2H}}))^{ - 1}}\Sigma _0 \right)\nonumber\\
&\mbox{}+\left[4Z({(t + \varepsilon )^{2H}} - {\varepsilon ^{2H}}){(1 + 2Z({(t + \varepsilon )^{2H}} - {\varepsilon ^{2H}}))^{ - 2}}-{(1 + 2Z({(t + \varepsilon )^{2H}} - {\varepsilon ^{2H}}))^{ - 1}}\right]\Sigma _0 \nonumber\\
&\times{\left(1 + 2({(t + \varepsilon )^{2H}} - {\varepsilon ^{2H}})Z\right)^{ - \frac{v}{2}}}
\exp\left( - Z{(1 + 2Z({(t + \varepsilon )^{2H}} - {\varepsilon ^{2H}}))^{ - 1}}\Sigma _0 \right),
\end{align*}
for $Z>0$. Letting $Z\rightarrow 0$, it follows that
\[\tilde{E}[u_t(\varepsilon )]=v({(t + \varepsilon )^{2H}} - {\varepsilon ^{2H}})+\Sigma_0.\]
Consequently, we give
\begin{equation}\label{forward2}
V(0)=e^{-rT}\left[v({(T + \varepsilon )^{2H}} - {\varepsilon ^{2H}})+\Sigma_0-\iota\right].
\end{equation}
And the associated price at $t=0$ of the forward contract is
\[P(0)=v({(T + \varepsilon )^{2H}} - {\varepsilon ^{2H}})+\Sigma_0.\]

In fact, from the proof of Theorem \ref{th4.4} we know that $V(t_0)=\tilde{E}[u_T(\varepsilon )|\mathcal{F}_{t_0}]$ depends on $u_{t_0}(T-t_0+\varepsilon)$. Since $u_0(x)=\Sigma_0$ for $x\in \mathbb{R}$, (\ref{forward2}) is a special case.
 If $H=\frac{1}{2}$, then we have $u_{t_0}(T-{t_0}+\varepsilon)=u_{t_0}(\varepsilon)$ and we can use the value of the volatility at $t$ to price the contract. However, if $H\neq\frac{1}{2}$ and ${t_0}>0$, then $u_{t_0}(T-{t_0}+\varepsilon)=u_{t_0}(\varepsilon)$ does not necessarily hold and we can neither use the the value of the volatility at $t_0$ to price the contract nor get the information of $u_{t_0}(T-t_0+\varepsilon)$.

\section{Conclusions}\label{section7}\noindent
\setcounter{equation}{0}
In this paper, we firstly define the fractional Wishart process to capture the property of serial correlation. As the fractional Wishart process is difficult to describe in the  form of dynamics, we introduce the $\varepsilon$-fractional Wishart process to approximate the fractional Wishart process in distribution and extend the $\varepsilon$-fractional Wishart process to the case with a non-integer index.  Moreover, we have showed that the $\varepsilon$-fWIS process can be defined by the associated SPDE so that it is more applicable for some real dynamic problems.

However, serial correlation among the increments of fWIS and $\varepsilon$-fWIS processes leads to the difficulty in risk-neutral pricing method. We note that the fWIS or $\varepsilon$-fWIS process provides an interesting and useful tool to depict some complex financial models.  Therefore,  investments and pricing methods based on the fWIS or $\varepsilon$-fWIS process shall be our future work.

\end{document}